\numberwithin{equation}{section}
\newtheorem{theorem}{Theorem}[section]
\newtheorem{proposition}{Proposition}[section]
\newtheorem{conjecture}{Conjecture}[section]
\newtheorem{lemma}[proposition]{Lemma}
\newtheorem{example}[proposition]{Example}
\newcommand*{\asubset}{\mathrel{\vcenter{\offinterlineskip\hbox{$\sim$}\vskip-.1ex\hbox{$\subset$}\vskip.2ex}}}
\newcommand{\dd}{\, \mathrm{d}}
\DeclareMathOperator{\dist}{dist}       
\newcommand{\eps}{\varepsilon}
\newcommand{\N}{\mathbb{N}}             
\newcommand{\M}{\mathbb{M}}             
\newcommand{\matrizzz}[1]{\left ( \begin{array}{ccc} #1 \end{array}\right )}
\newcommand{\R}{\mathbb{R}}             
\newcommand{\LL}{\mathcal{L}}             
\newcommand{\MM}{\mathcal{M}}             
\newcommand{\HH}{\mathcal{H}}             
\newcommand{\C}{\mathcal{C}}             
\newcommand{\A}{\mathcal{A}}             
\let\O=\Omega
\newcommand{\res}{\mathop{\hbox{\vrule height 7pt width .5pt depth 0pt \vrule height .5pt width 6pt depth 0pt}}\nolimits\,}
\renewcommand{\vec}{\boldsymbol}    
\newcommand{\vecg}{\boldsymbol}
\newcommand{\weakc}{\rightharpoonup}
\newcommand{\weakcs}{\overset{*}{\rightharpoonup}}
\begin{document}

\title[Reduced models for linearly elastic thin films]{Reduced models for linearly elastic thin films allowing for fracture, debonding or delamination}

\author{Jean-Fran\c cois Babadjian, Duvan Henao}

\address[J.-F. Babadjian]{Sorbonne Universit\'es, UPMC Univ Paris 06, CNRS, UMR 7598, Laboratoire Jacques-Louis Lions, F-75005, Paris, France}
\email{jean-francois.babadjian@upmc.fr}

\address[D. Henao]{Facultad de Matem\'aticas, Pontificia Universidad Cat\'olica de Chile, Vicu\~na Mackenna 4860, Macul, Santiago, Chile}
\email{dhenao@mat.puc.cl}

\subjclass{}
\keywords{Fracture mechanics, Functions of bounded deformation, $\Gamma$-convergence}

\begin{abstract}
This work is devoted so show the appearance
 of different cracking modes in linearly elastic thin film systems by means of an asymptotic analysis as the thickness tends to zero. By superposing two thin plates, and upon suitable scaling law assumptions on the elasticity and fracture parameters, it is proven that either  debonding or transverse cracks can emerge in the limit. A model coupling debonding, transverse cracks and delamination is also discussed.
\end{abstract}

\maketitle

\tableofcontents

\section{Introduction}

\noindent
 It is experimentaly observed that thin films systems can essentially develop two different crack patterns: either transverse cracks channeling through the thickness of the film, or planar debonding at the interface of two layers. In classical fracture mechanics,  a threshold criterion on the energy release rate drives the propagation of a crack along a prescribed path. Within this framework, \cite{HS} described different possibilities of failure modes. In \cite{XH}, a reduced two-dimensional model of a thin film system on an elastic foundation is proposed, and the propagation of different crack modes is discussed. 
This model is later recast as an energy minimization problem, 
based on the variational approach to fracture of \cite{BFM},
first in \cite{LBBMM} in a simplified one-dimensional setting where transverse cracks are represented
by a finite number of discontinuity points for the displacement,
then in \cite{LBBBHM} in the full two-dimensional case.
The total energy is the sum of a bulk energy (including the elastic energy of the film outside the transverse cracks)
 and a surface energy of Griffith type (including the area of the transverse fractures and of the debonded regions). Concretely, it takes the form
 \begin{multline*}
 	E(\bar {\vec u}, \Gamma, \Delta)=  \int_{\omega\setminus \Gamma} 
		\underbrace{\left [\frac{\lambda_f \mu_f}{\lambda_f+2\mu_f} e_{\alpha \alpha} (\bar {\vec u})e_{\beta \beta}(\bar {\vec u}) + \mu_f e_{\alpha \beta}(\bar {\vec u}) e_{\alpha\beta}(\bar {\vec u})
		 \right ]}_{A e(\bar {\vec u}): e(\bar {\vec u})}\dd x
		\\ + \frac{\mu_b}{2}\int_{\omega\setminus \Delta} |\bar {\vec u} - \vec w|^2 \dd x
		+\kappa_f\, \text{length}(\Gamma) + \kappa_b\,\text{area}(\Delta);
 \end{multline*}
 we proceed to explain each term separately.
 The region $\omega\subset \R^2$ denotes the basis of a thin film 
 $\Omega^\eps=\omega \times (0,h_f)$ bonded on a infinitely rigid substrate,
 where $h_f$ is the thickness of the film and $\eps=\frac{h_f}{L}$, $L=\text{diam}\,\omega$,
 is a non-dimensional small parameter. 
 The transverse cracks are of the form $\Gamma\times (0,h_f)$
 where $\Gamma$ is a one-dimensional object which can be thought of 
 as the union of a finite number of closed curves, which are themselves part of the unknowns of the 
 problem.
 The delamination zone $\Delta \subset \omega$ is also an unknown.
 The in-plane displacement of the film
 at the interface with the substrate is denoted by $\bar {\vec u}:\omega \setminus \Gamma 
 \to \R^2$.
 The fracture toughness $\kappa_f$ is a material property of the film,
 while $\kappa_b$ measures the strength of the bonding between
 the film and the substrate.
 The reduced linearly elastic energy $Ae(\bar {\vec u}):e(\bar {\vec u})$ is well-known
 and rigorously derived in the Kirchhoff-Love theory of elastic plates 
 \cite{Ciarlet97}.
 
 It remains to explain the term $E_c:=\frac{\mu_b}{2}\int_{\omega\setminus \Delta} 
 |\bar {\vec u} - \vec w|^2 \dd x$.
 The map $\vec w:\omega \to \R^2$ is given and represents the displacement 
 of the substrate. Since the substrate is  assumed to be infinitely rigid,
 $\vec w$ is the same displacement it would undergo if the film were not present. Note also
 that  we are only considering planar displacements of the substrate. 
 The  energy $E_c$ represents the price to pay in order 
 for the film to deform differently from the substrate.
 It only has to be paid in $\omega\setminus \Delta$ because in $\Delta$
 the film is no longer attached to the substrate.
 By regarding  the film and the substrate as a single elastic body,
it is seen that $E_c$ has the  form $\int_{\omega\setminus \Delta} g([\vec u])\dd x$ of a Barenblatt's cohesive-zone surface energy \cite{Barenblatt},
where $[\vec u]$ represents the jump of the displacement across the debonding
 zone,
in this case with the integrand $g(s)=\frac{\mu_b}{2}s^2$.
These cohesive energies are considered, in particular, in the existing
analytical studies of delamination problems, e.g.\ \cite{BFF,RSZ,FPRZ,FRZ}.
Apart from fracture mechanics, this type of integrals also appear in 
the study of Winkler foundations \cite{Winkler},
with applications as varied as the understanding of the seismic response of piers, 
chromosome function, or the mechanical response of carbon nanotubes embedded in 
elastic media (see \cite{LBB} and the references therein).
In this setting, $E_c$ is interpreted as the effective energy of an elastic foundation, 
understood as a continuous bed of mutually independent, linear, elastic springs,
hence the appearance of a reaction force of the form $\mu_ b (\bar{\vec u}(x)-\vec w(x))$ 
(corresponding to the quadratic energy $\frac{\mu_b}{2} |\bar{\vec u}-\vec w|^2$)
in response to the relative displacement of the body supported on the foundation.

The question addressed by this paper is the rigorous derivation of $E(\bar{\vec u}, \Gamma, \Delta)$ from three-dimensional linearized elasticity in the limit as $\eps\to 0$.
This involves
\begin{enumerate}
\item the derivation of a reduced Griffith model for the initiation and propagation of cracks
in a thin film, and, 
\item the justification of the cohesive energy for the debonding
 at the interface.
\end{enumerate}

Previous studies of the first problem include \cite{BF,BFLM,B1,LBBBHM},
which consider scalar-valued problems or generalizations which 
are incompatible with 3D linear elasticity
(coercivity assumptions
of the form $W(F)\geq C(|F|^p-1)$ are imposed on the stored-energy densities $W(\nabla \vec u)$),
and \cite{FPZ}, where the thin film is linearly elastic but the path and the geometry of the crack
are specified a priori (it has to be of the form $\Gamma\times (0,h_f)$ and it can only be a single crack). 
In Theorem \ref{BH1} below, we present the first 
complete result for the reduction of dimension of a brittle linearly elastic thin film,
without any prior assumption on the geometry or the topology of the cracks. 
As is well-known \cite{BFM}, this falls in  the framework of free discontinuity problems, where a satisfactory mathematical treatment can be done in the space of special functions of bounded deformation. We adopt usual scalings for the elastic and fracture parameters, and show the convergence to a reduced model  where admissible cracks are vertical, and admissible displacements have a Kirchhoff-Love type structure (the out-of-plane displacement is planar, while the in-plane displacement is affine with respect to the out-of-plane variable). The main difficulty is to establish a compactness result on minimizing sequences (Propositions \ref{prop:compactness} and \ref{prop:properties}) showing the structure of limit displacements and cracks with finite energy. It uses tools of geometric measure theory and fine properties of bounded deformation functions. 

The justification of the cohesive energy $E_c$ is also very delicate. 
In \cite{I13,DMI13,FI14,CFI14}
a free discontinuity model 
with a  cohesive fracture energy 
is obtained as the $\Gamma$-limit of an Ambrosio-Tortorelli functional
in which the constraint $z\geq \sqrt{\eps}$ is imposed on the 
internal damage variable, $\eps$ being the width
of the damage zones.
This is in the spirit of considering the possibility that what 
macroscopically would be regarded as fracture is actually
a strain mismatch that is continuously accommodated
through a very thin layer of a very compliant material. 
However, it is unclear whether their approach is suitable for the study of thin films, in particular
if it could explain that only transverse fracture and planar debonding 
at the interface can be observed in the limit. 
Cohesive-type energies have also been obtained
by homogenization
in \cite{Ans,ABZ} 
as the limit of a Neumann sieve, debonding being regarded as
the effect of the interaction of two films through a suitably
periodically distributed contact zone.
A different derivation of a cohesive fracture energy (albeit with a positive activation
threshold) can also be found in \cite{BLZ},
in this case as a result of the homogenization of brittle composites with soft inclusions.

Here we consider the problem
of deriving $E_c$
based on conceiving
the interface between a bimaterial system as
a very thin layer of a third phase,
occupying the region $\Omega_b=\omega \times [-h_b, 0]$,
where $h_b$ represents its thickness.
The expectation is 
to recover the cohesive delamination energy 
from the elastic energy of the bonding layer
when it is made of a material that is increasingly more compliant
as $\eps\to 0$. A scaling law is then proposed for
the Lam\'e moduli of the adhesive, of the form
$(\lambda^\eps, \mu^\eps)=\eps^q(\lambda_b, \mu_b)$,
for some fixed $\lambda_b, \mu_b$ and some exponent $q$ 
to be determined. 
Calling $\eps_b:= \frac{h_b}{L}$ to the aspect ratio in $\Omega_b$
and using the rescaled displacements $u_\alpha(x', x_3):=  
v_\alpha(L x', h_bx_3)$, $u_3(x',x_3)=h_b v_3(Lx', x_3)$,
where $x'\in \omega/L$, $x_3\in [-1,0]$ are non-dimensional
rescaled spatial variables, 
we are able to write the energy of the bonding layer in the form
\begin{align*}
	J_\eps(\vec v_\eps, \Omega_b)=
	h_b \eps^q \eps_b^{-2} \tilde{J}_\eps (\vec v_\eps),
\end{align*}
with
\begin{align*}
	\tilde{J}_\eps (\vec v)
&:= \frac12  \int_{\frac{\omega}{L}\times (-1,0)} \Big\{
\varepsilon_b^2\Big[    e_{\alpha \alpha}(\vec u) e_{\beta \beta}(\vec u) + 2\mu_b  e_{\alpha \beta}(\vec u)  e_{\alpha \beta}(\vec u)\Big]
\\ 
&+\Big[2\lambda_b  e_{\alpha \alpha}(\vec u)  e_{33}(\vec u) + 4\mu_b  e_{\alpha 3}(\vec u)  e_{\alpha 3}(\vec u)\Big] 
+\frac{1}{\varepsilon_b^2}(\lambda_b+2\mu_b)  e_{33}(\vec u)  e_{33}(\vec u)\Big\}\dd x.
\end{align*}
If $\tilde J_\eps$ remains bounded as $\eps\to 0$, 
due to the $\eps_b^{-2}$ in front of the third term, $u_3$ is expected to be planar
in the limit; if the displacement of the substrate is planar, this means $u_3\equiv 0$ 
outside the delamination zone.
 On the other hand, due to the $\eps_b^2$ coefficient for the first term,
we expect the in-plane gradient to be irrelevant.
Thus, the bonding layer is expected to behave 
according to $\frac{\mu_b}{2} \int_{\omega\setminus \Delta} \partial_\alpha u_3\partial_\alpha u_3$, which is minimized if the strain mismatch between the film and 
the substrate is accomodated by an affine transition in the $x_3$ variable,
giving rise to the cohesive energy $E_c$. 
The assumption that $\tilde J_\eps$ is bounded in the asymptotic analysis 
corresponds to the energy in the bonding layer being of the same order of magnitude
as the elastic energy of the film, which scales as $h_f$. This yields the scaling
$$h_b \eps^q \eps_b^{-2} \sim h_f
	\quad \Leftrightarrow\quad 
\frac{h_b}{h_f} \eps^q \sim \left ( \eps \frac{h_b}{h_f}\right )^2 
	\quad \Leftrightarrow\quad 
\frac{h_b}{h_f} \sim \eps^{q-2}.
$$
Without loss of generality, in this paper we consider the case when 
the thicknesses of the film and of the bonding layer have the same order
of magnitude and $q=2$, that is, 
$(\lambda^\eps, \mu^\eps)=\eps^2(\lambda_b, \mu_b)$.
For the effect of other scaling assumptions, we refer to \cite{LBB}.

The above heuristics were made rigorous in \cite{LBBBHM} in the simplified case
of scalar displacements. 
In the anti-plane case, where the problem becomes scalar, a simple adaptation of that result shows the convergence to a model coupling transverse cracks, cohesive transitions as long as the in-plane displacement is below a precise threshold, and delamination when the threshold is overpassed (Theorem \ref{BH}).
In the full vectorial linearly elastic case, 
the reduced model was rigorously derived in \cite{LBB} for the problem of Winkler foundations,
that is, when the displacements are Sobolev maps so that neither the film nor
the bonding layer are allowed to undergo fracture. In Theorem \ref{BH0}
below
we give a simpler proof of the same result. 

We have been unable to prove the convergence to $E(\vec u, \Gamma, \Delta)$
in the case of interest of a linearly elastic bonding layer which may undergo fracture.
We limit ourselves to present some partial results which, in our opinion, 
ought to be considered in any attempt to establish the desired $\Gamma$-convergence.
We prove an energy upper bound by constructing, for every admissible limit displacement, an optimal recovery  sequence (Proposition \ref{pr:UB}).
What remains open is
 to establish the optimality of the affine transitions in the $x_3$ variable
in order to accommodate the mismatch between the film and the substrate.
Indeed, the ability to break gives the bonding layer the opportunity to
reduce its elastic energy by performing a periodic  sequence of small rotations
(Example \ref{ex:micro}). This implies that the delamination zone cannot be identified just by taking the orthogonal projections of the jump set of the displacement, as is done in the Sobolev and scalar cases. As a possible remedy, we consider instead, ``almost vertical'' projections. We are able to prove a surface energy lower bound (although with a bad multiplicative constant)
 and to show the validity of the desired bulk energy lower bound under the assumption that the minimizing sequence satisfies better a priori estimates than just the energy bound (Lemma \ref{lem:bulkb2}).

We end this Introduction by mentioning \cite{MRT}, where a Griffith energy for the debonding at the interface is obtained as the limit elastic energy of a thin bonding layer in a problem involving a damage internal variable. The techniques of that paper may  prove relevant in the derivation of the reduced model $E(\bar{\vec u}, \Gamma, \Delta)$ discussed in this paper.
\bigskip

The paper is organized as follows: Section \ref{sec:2} is devoted to introduce various notations used throughout this work. In Section \ref{sec:3}, we precisely decribe the model and perform a scaling to make the problem more tractable from a mathematical point of view. Section \ref{sec:4} investigates the asymptotic analysis in the absence of cracks, and evidences the appearance of a debonding type limiting energy (Theorem \ref{BH0}). In Section \ref{sec:5}, we carry out the analysis a linearly elastic thin film, and show the emergence of transverse cracks (Theorem \ref{BH1}).  Finally, Section \ref{sec:6} discusses the interplay between transverse cracks, debonding, and delamination.

\section{Notation and preliminaries}\label{sec:2}

\noindent If $a$ and $b \in \R^n$, we write $a \cdot b=\sum_{i=1}^n a_i b_i$ for the Euclidean scalar product, and we denote the norm by $|a|=\sqrt{a \cdot a}$. The open ball of center $x$ and radius $\varrho$ is denoted by $B_\varrho(x)$. If $x=0$, we simply write $B_\varrho$ instead of $B_\varrho(0)$.

\medskip

We denote by $\M^{m \times n}$ the set of real $m \times n$ matrices, and by $\M^{n \times n}_{\rm sym}$ the set of all real symmetric $n \times n$ matrices. Given two matrices $A$ and $B \in \mathbb M^{m \times n}$, we let $A:B:={\rm tr}(A^T B)$ for the Frobenius scalar product, and $|A|:=\sqrt{{\rm tr}(A^T A)}$ for the associated norm ($A^T$ is the transpose of $A$, and ${\rm tr }(A)$ is its trace). We recall that for any two vectors $a \in \R^m$ and $b \in \R^n$, $a \otimes b \in \M^{m \times n}$ stands for the tensor product, {\it i.e.}, $(a \otimes b)_{ij}=a_i b_j$ for all $1 \leq i \leq m$ and $1 \leq j \leq n$. If $m=n$, then $a \odot b:=\frac12 (a \otimes b + b \otimes a) \in \M^{n \times n}_{\rm sym}$ denotes  the symmetric tensor product.

\medskip

Given an open subset $U$ of $\R^n$ and a finite dimensional Euclidean space $X$. We use standard notations for Lebesgues spaces $L^p(U;X)$ and Sobolev spaces $H^1(U;X)$ or $W^{1,p}(U;X)$. We denote by $\mathcal M(U;X)$ the space of all $X$-valued Radon measures with finite total variation. If the target space $X=\R$, we omit to write it for simplicity. According to the Riesz representation Theorem, it is identified to the topological dual of $\C_0(U;X)$ (the space of all continuous functions $\varphi : U \to X$ such that $\{\varphi \geq \eps\}$ is compact for every $\eps>0$), and a weak* topology is defined according to this duality.  The Lebesgue measure in $\R^n$ is denoted by $\LL^n$, and the $k$-dimensional Hausdorff measure by $\HH^k$. Sometimes, the notation $\#$ will be used instead of $\HH^0$ for the counting measure, and $|\cdot|$ instead of the Lebesgue measure $\LL^n$. In dimension $n$, equality of inclusion of sets up to a $\HH^{n-1}$-negligible set will be respectively denoted by $\cong$ and $\asubset$.

\medskip

Given a function $u \in L^1(U;\R^m)$ with $m \geq 1$. We say that $u$ has an approximate limit at $x \in U$ if there exists $\tilde u(x) \in \R^m$ such that
$$\lim_{\varrhoÊ\to 0} \frac{1}{\varrho^n} \int_{B_\varrho(x)} | u(y) -  \tilde u(x)|\dd y =0.$$
The set $S_u$ where this property fails is called the approximate discontinuity set.

We say that $u$ has one-sided Lebesgue limits $u^\pm(x)\in \R^m$ at $x \in U$ with respect to a direction $\nu_u(x) \in \mathbb S^{n-1}:=\{Ê\zeta \in \R^n : |\zeta|=1\}$ if 
$$\lim_{\varrho \to 0} \frac{1}{\varrho^n} \int_{B^\pm_\varrho(x,\nu_{ u}(x))} | u(y) -  u^\pm(x)|\dd y =0,$$
where $B^\pm_\varrho(x,\nu_{ u}(x)):=\{y \in B_\varrho(x) : \pm \nu_{ u}(x) \cdot (y-x) \geq 0\}$. We will denote by $[u](x):=u^+(x) - u^-(x)$ the jump of $u$ at $x$. The jump set $J_u$ of $u$ is defined as the set of points $x \in U$ such that the one-sided Lebesgue limits  with respect to a direction $\nu_{ u}(x)$ exist, and in addition $u^+(x) \neq u^-(x)$.  Clearly we have $J_u \subset S_u$.

\subsection{Functions of bounded variation}

The space $BV(U;\R^m)$ of functions of bounded variation in $U$ with values in $\R^m$ is made of all functions $u \in L^1(U;\R^m)$ such that the distributional derivative satisfies $Du \in \mathcal M(U;\M^{m \times n})$. The measure $Du$ can be decomposed as
$$Du=\nabla u \LL^n + (u^+-u^-) \otimes \nu_uÊ\HH^{n-1} \res J_u + D^c u,$$
where $\nabla u$ is the Radon-Nikod\'ym derivative of $Du$ with respect to the Lebesgue measure $\LL^n$, which coincides with the approximate gradient of $u$. For any $1 \leq i \leq m$ and $1 \leq j \leq n$, we denote by $\partial_j u_i:=(\nabla u)_{ij}$ the entries of $\nabla u$.  The measure $D^c u$ is the Cantor part of $Du$ which has the property of vanishing on any $\sigma$-finite set with respect to the $(n-1)$-dimensional Hausdorff measure $\HH^{n-1}$. The jump set $J_u$ is a  countably $\HH^{n-1}$-rectifiable Borel set, $\nu_u$ is an approximate unit normal to $J_u$, and $u^\pm(x)$ are the one-sided Lebesgue limits of $u$ at $x \in U$ in the direction $\nu_u(x)$. In addition, we have $\HH^{n-1}(S_u \setminus J_u)=0$.

We say that $u$ is a special function of bounded variation, and we write $u \in SBV(U;\R^m)$, if $D^c u=0$. If further $\nabla uÊ\in L^p(U;\R^{m \times n})$ for some $p>1$, and $\HH^{n-1}(J_u)<\infty$, we write $u \in SBV^p(U;\R^m)$. We refer to \cite{AmFuPa00} for general properties of $BV$-functions.

\subsection{Functions of bounded deformation}

The space $BD(U)$ of functions of bounded deformation is made of all vector fields $\vec u \in L^1(U; \R^n)$ whose distributional symmetric gradient satisfies 
$$E\vec u= \frac{D\vec u + D\vec u^T}{2} \in \mathcal M(U;\M^{n \times n}_{\rm sym}).$$
This measure can be decomposed as 
\begin{equation} \label{eq:distSBD}
E\vec u = e(\vec u)\mathcal L^n + (\vec u^+ - \vec u^-) \odot \nu_{\vec u} \HH^{n-1} \res J_{\vec u} +E^c \vec u.
\end{equation}
In the previous expression, $e(\vec u)$ denotes the absolutely continuous part of $E\vec u$ with respect to $\LL^n$. For any $1 \leq i,j \leq n$, we denote by $e_{ij}(\vec u)=(e(\vec u))_{ij}$ the entries of $e(\vec u)$. The measure $E^c\vec u$ is the Cantor part of $E\vec u$ which has the property to vanish on any $\sigma$-finite set with respect to $\HH^{n-1}$. The jump set $J_{\vec u}$ of $\vec u$ is a countably $\HH^{n-1}$-rectifiable Borel set, $\nu_{\vec u}$ is an approximate unit normal to $J_{\vec u}$, and $\vec u^\pm(x)$ are the one-sided Lebesgue limits of $\vec u$ at $x \in U$ in the direction $\nu_{\vec u}(x)$. 
If $E^c \vec u=0$, we say that $\vec u$ is a special function of bounded deformation and we write $\vec u \in SBD(U)$. We refer to \cite{Temam,MSC,ST,Suquet3,AG,B3,AmCoDM97,BeCoDM98,C2,DalMaso} for general properties of $BD$-functions.

\subsection{General conventions}

In the sequel we will always work in dimensions $1$, $2$ or $3$. Latin indices $i$, $j$, $k$, $l$, ... (except $f$ and $b$) take their values in the set $\{1,2,3\}$ unless otherwise indicated. Greek indices $\alpha$, $\beta$, $\gamma$, ... (except $\varepsilon$) take their values in the set $\{1,2\}$. The repeated index summation convention is systematically used.

\section{Description of the problem}\label{sec:3}

\subsection{In the original configuration}

Let  $\omega$ be a bounded and connected open subset of $\R^2$ with Lipschitz boundary which denotes the basis of a thin domain occupying the open set $\O^\eps:=\omega \times (-2\eps,\eps)$ in its reference configuration. We assume that this domain is made of the union of a film $\Omega^\eps_f := \omega \times (0, \varepsilon)$, a bonding layer $\Omega^{\varepsilon}_b := \omega \times [-\varepsilon, 0]$, and a substrate $\Omega^{\varepsilon}_s := \omega \times (-2\varepsilon, -\varepsilon)$. Let us underline that the set $\O_b^\eps$ is not open. Any kinematically admissible displacement $\vec v :\O^\eps \to \R^3$ is required to satisfy the boundary condition $\vec v=0$ in $\O_s^\eps$. In the sequel we shall denote by $x':=(x_1,x_2)$ the in-plane variable.

The background behavior of this medium in that of an isotropic linearly elastic material whose Lam\'e coefficients are given by
$$(\lambda^{\varepsilon}, \mu^{\varepsilon}) 
= \begin{cases} 
(\lambda_f, \mu_f) & \text{in}\ \Omega^\eps_f,\\
\varepsilon^2 (\lambda_b, \mu_b) & \text{in}\ \Omega^\eps_b.
\end{cases}
$$
The elastic energy associated to a displacement $\vec v \in H^1(\O^\eps;\R^3)$ satisfying $\vec v=0$ $\LL^3$-a.e. in $\O_s^\eps$ is given by
\begin{equation}\label{eq:3Delastic-energy}
\frac12 \int_{\O^\eps}Ê\Big[\lambda^\eps e_{ii}(\vec v) e_{jj}(\vec v) + 2\mu^\eps e_{ij}(\vec v) e_{ij}(\vec v)\Big]\dd x.
\end{equation}

If the body undergoes cracks, according to the variational approach to fracture (see \cite{FM,BFM}), the presence of cracks is penalized by means of a surface energy of Griffith type where the toughness is given by
$$\kappa^{\varepsilon}=
\begin{cases}
\kappa_f & \text{in}\ \Omega^\eps_f, \\
\varepsilon \kappa_b & \text{in}\ \Omega^\eps_b.
\end{cases}$$
In this case, Sobolev spaces cannot describe admissible displacements since they may jump across the cracks. The natural framework is to consider displacements which are special functions of bounded deformation. Identifying the cracks with the jump set of the displacement, denoted by $J_{\vec v}$, the surface energy is given by
\begin{equation}\label{eq:3Dsurface-energy}
\int_{J_{\vec v}Ê\cap \O^\eps}\kappa^\eps \dd \HH^2.
\end{equation}
The total energy is then given by the sum of the bulk energy, given by \eqref{eq:3Delastic-energy}, where $e(\vec v)$ is intended as the absolutely continuous part of the strain with respect to the Lebesgue measure, and the surface energy, given by \eqref{eq:3Dsurface-energy}. It is well defined for any displacements $\vec v \in SBD(\O^\eps)$ satisfying the boundary condition $\vec v=0$ $\LL^3$-a.e. in the substrate $\O_s^\eps$.

\subsection{In the rescaled configuration}

As usual in dimension reduction, we rescale the problem on a fixed domain of unit thickness (see \cite{Ciarlet97}). We denote by $\O:=\O^1$, $\O_f:=\O^1_f$, $\O_b:=\O^1_b$, and $\O_s:=\O^1_s$. For every original displacement $\vec v \in H^1(\O^\eps;\R^3)$ (resp. $\vec v \in SBD(\O^\eps)$) such that $\vec v=0$ $\LL^3$-a.e. in $\O_s^\eps$, we define the rescaled displacement $\vec u$ in the rescaled configuration by
$$\begin{cases}
u_\alpha ( x', x_3) =  v_{\alpha}( x',\varepsilon x_3),\\
u_3( x',  x_3) = \eps v_3( x', \varepsilon x_3),
\end{cases} 
\quad \text{for all }x=(x',x_3) \in \O.
$$
Replacing $\vec v$ by this expression in the energy \eqref{eq:3Delastic-energy}, and dividing the resulting expression by $\eps$ yields the following rescaled elastic energy (see \cite{Ciarlet97})
$$J_\eps(\vec u)=J_\eps(\vec u,\O_f) + J_\eps(\vec u,\O_b),$$
where
\begin{eqnarray}\label{eq:expru2}
J_\varepsilon(\vec u, \Omega_f)  &:= & \frac{1}{2} \int_{\Omega_f} \Big[\lambda_f  e_{\alpha \alpha}(\vec u) e_{\beta \beta}(\vec u) + 2\mu_f  e_{\alpha \beta}(\vec u)  e_{\alpha \beta}(\vec u)\Big]\dd x \\
&&+\frac{1}{2\varepsilon^2} \int_{\Omega_f} \Big[2\lambda_f  e_{\alpha \alpha}(\vec u)  e_{33}(\vec u) + 4\mu_f  e_{\alpha 3}(\vec u)  e_{\alpha 3}(\vec u)\Big]\dd x \nonumber\\
&&+\frac{1}{2\varepsilon^4} \int_{\Omega_f} (\lambda_f+2\mu_f)  e_{33}(\vec u)  e_{33}(\vec u)\dd x,\nonumber
\end{eqnarray}
and
\begin{eqnarray}\label{eq:expru3}
J_{\varepsilon}(\vec u,\Omega_b) &:= & \frac{\varepsilon^2}{2} \int_{\Omega_b} 
\Big[\lambda_b  e_{\alpha \alpha}(\vec u) e_{\beta \beta}(\vec u) + 2\mu_b  e_{\alpha \beta}(\vec u)  e_{\alpha \beta}(\vec u)\Big]\dd x \\
&&+\frac{1}{2} \int_{\Omega_b} \Big[2\lambda_b  e_{\alpha \alpha}(\vec u)  e_{33}(\vec u) + 4\mu_b  e_{\alpha 3}(\vec u)  e_{\alpha 3}(\vec u)\Big] \dd x\nonumber\\
&&+\frac{1}{2\varepsilon^2} \int_{\Omega_b} (\lambda_b+2\mu_b)  e_{33}(\vec u)  e_{33}(\vec u)\dd x.\nonumber
\end{eqnarray}
In the case of cracks, the total energy is obtained by adding the surface energy. In the rescaled configuration, it is given by (see \cite{BF,BFLM,B1,B2})
$$E_\eps(\vec u)=E_\eps(\vec u,\O_f) + E_\eps(\vec u,\O_b),$$
where
$$E_{\varepsilon}(\vec u,\Omega_f) = J_{\varepsilon}(\vec u,\Omega_f)+ \kappa_f \int_{J_{\vec u} \cap \Omega_f } \left |\left ( (  \nu_{\vec u})', \frac{1}{\varepsilon} (\nu_{\vec u})_3 \right ) \right | \dd\HH^2,$$
and
$$E_{\varepsilon}(\vec u,\Omega_b) = J_{\varepsilon}(\vec u,\Omega_b)+ \kappa_b \int_{J_{\vec u} \cap \Omega_b} \left |\left ( \varepsilon ( \nu_{\vec u})',  (\nu_{\vec u})_3 \right ) \right | \dd\HH^2.$$

\section{Debonding of thin films}\label{sec:4}

\noindent In this section, we assume that the body is purely elastic, {\it i.e.}, no cracks are allowed. Through an asymptotic analysis as the thickness $\eps$ tends to zero, we rigorously recover a reduced two-dimensional model of a thin film system as an elastic membrane on an in-plane elastic foundation. A similar model has been derived in \cite[Theorem 2.1]{LBB} by means of a different method. The original three-dimensional energy $J_\eps : L^2(\O;\R^3) \to [0,+\infty]$ is defined by 
$$
J_\eps(\vec u):=\left\{
\begin{array}{ll}
J_\eps(\vec u,\O_f) + J_\eps(\vec u,\O_b) & \text{ if } \vec u \in H^1(\O;\R^3) \text{ and }\vec u=0 \: \LL^3\text{-a.e. in }Ê\O_s,\\
+\infty & \text{ otherwise,}
\end{array}
\right.
$$
while the reduced two dimensional energy $J_0:L^2(\O;\R^3) \to [0,+\infty]$ is given by 
$$
J_0(\vec u):=\left\{\!\!\!
\begin{array}{ll}
\begin{array}{l}
\displaystyle \int_\omega\left [ \frac{\lambda_f \mu_f}{\lambda_f + 2\mu_f} e_{\alpha \alpha}(\bar{\vec u})e_{\beta \beta}(\bar{\vec u}) + \mu_f e_{\alpha\beta}(\bar{\vec u})e_{\alpha\beta}(\bar{\vec u}) \right ] \dd x'\\
\hfill \displaystyle + \frac{\mu_b}{2}Ê\int_\omega |\bar{\vec u}|^2 \dd x'
\end{array}
 & \text{if } 
\left\{ \begin{array}{l}
 \vec u=(\bar{\vec u},0),\\
 \bar{\vec u}Ê\in H^1(\omega;\R^2),
 \end{array}\right.\\
+\infty & \text{otherwise.}
\end{array}
\right.
$$

Our first main result in the following $\Gamma$-convergence type result.

\begin{theorem}\label{BH0}
Let $\vec u \in L^2(\O;\R^3)$, then
\begin{itemize}
\item for any sequence $(\vec u_\eps)_{\eps>0} \subset L^2(\O;\R^3)$ with $\vec u_\eps \to \vec u$ strongly in $L^2(\O_f;\R^3)$, then
$$J_0(\vec u)Ê\leq \liminf_{\eps \to 0}ÊJ_\eps (\vec u_\eps);$$
\item there exists a recovery sequence $( {\vec u}^*_\eps)_{\eps>0} \subset L^2(\O;\R^3)$ such that $ {\vec u}^*_\eps \to \vec u$ strongly in $L^2(\O_f;\R^3)$, and
$$J_0(\vec u)Ê\geq \limsup_{\eps \to 0}ÊJ_\eps ({\vec u}^*_\eps).$$
\end{itemize}
\end{theorem}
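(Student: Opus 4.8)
The plan is to establish the two statements separately, exploiting the convexity and coercivity structure of $J_\eps$.

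\medskip

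\textbf{Lower bound (liminf inequality).} We may assume $\liminf_\eps J_\eps(\vec u_\eps) < +\infty$, and, passing to a subsequence, that the liminf is a limit and that $\sup_\eps J_\eps(\vec u_\eps) =: M < \infty$; in particular $\vec u_\eps \in H^1(\O;\R^3)$ and $\vec u_\eps = 0$ a.e.\ in $\O_s$ for $\eps$ small. The first step is to extract compactness information from the energy bound. Looking at \eqref{eq:expru2}, the three groups of terms are each nonnegative (each is a sum of squares of strain components weighted by positive Lam\'e-type combinations, since $\lambda_f,\mu_f>0$ and $\lambda_f+2\mu_f>0$), and the bound $J_\eps(\vec u_\eps,\O_f) \le M$ yields
$$
\int_{\O_f} |e_{\alpha\beta}(\vec u_\eps)|^2 \dd x \le C, \qquad
\int_{\O_f} |e_{\alpha 3}(\vec u_\eps)|^2 \dd x \le C\eps^2, \qquad
\int_{\O_f} |e_{33}(\vec u_\eps)|^2 \dd x \le C\eps^4 .
$$
Hence $e(\vec u_\eps) \to \mathbf e$ (weakly in $L^2(\O_f;\M^{3\times3}_{\rm sym})$ along a subsequence) with $\mathbf e_{i3} = 0$, i.e.\ $\mathbf e$ has only an in-plane block. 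Combining $\vec u_\eps \to \vec u$ in $L^2(\O_f)$ with the $L^2$ bound on $e(\vec u_\eps)$ and the vanishing boundary data on $\O_s$ (which controls the rigid-motion part), a Korn-type / Poincar\'e argument in $BD$ (or directly in $H^1$, given the boundary condition and the fact that $\O_f\cup\O_b\cup\O_s$ is connected) gives $\vec u \in H^1(\O_f;\R^3)$ with $e(\vec u) = \mathbf e$; the vanishing of $e_{i3}(\vec u)$ forces the Kirchhoff--Love structure — $u_3$ independent of $x_3$ and $u_\alpha$ affine in $x_3$ — and since the trace of $\vec u_\eps$ on $\{x_3=0\}$ and its extension match the (zero) substrate displacement in the limit (the bonding-layer energy controls $\vec u_\eps$ across $\O_b$), one concludes $u_3\equiv 0$ and $u_\alpha = \bar u_\alpha(x')$, with $\bar{\vec u}\in H^1(\omega;\R^2)$. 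This identifies the finite-energy class. For the bulk film term, we use the standard fact that for fixed in-plane strain $e_{\alpha\beta}$ the quadratic form is minimized over the remaining strain components exactly by the relaxed density $\frac{\lambda_f\mu_f}{\lambda_f+2\mu_f}e_{\alpha\alpha}e_{\beta\beta}+\mu_f e_{\alpha\beta}e_{\alpha\beta}$ (elimination of $e_{i3}$ and $e_{33}$, as in Kirchhoff--Love plate theory \cite{Ciarlet97}); combined with weak $L^2$ lower semicontinuity of the convex integrand in $e_{\alpha\beta}(\vec u_\eps)$ this gives the film part of $J_0$. For the bonding-layer term, the key observation is that the middle group in \eqref{eq:expru3} contains $4\mu_b e_{\alpha3}(\vec u_\eps)e_{\alpha3}(\vec u_\eps)$, and $e_{\alpha3}(\vec u_\eps) = \tfrac12(\partial_3 u_{\eps,\alpha} + \partial_\alpha u_{\eps,3})$; since $\O_b$ has unit height in the rescaled configuration, integrating $\partial_3 u_{\eps,\alpha}$ over $x_3\in[-1,0]$ links the trace on $\{x_3=0\}$ (which converges to $\bar u_\alpha$) to the trace on $\{x_3=-1\}$ (which is $0$), so a one-dimensional Jensen/Poincar\'e estimate in the $x_3$-variable produces $\frac{\mu_b}{2}\int_\omega|\bar{\vec u}|^2$ as a lower bound — this is precisely the affine-transition heuristic described in the Introduction, made rigorous via lower semicontinuity and the slicing of $e_{\alpha3}$.

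\medskip

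\textbf{Upper bound (recovery sequence).} Given $\vec u = (\bar{\vec u},0)$ with $\bar{\vec u}\in H^1(\omega;\R^2)$ (otherwise $J_0(\vec u)=+\infty$ and nothing is to prove), we construct $\vec u^*_\eps$ explicitly. By density we may first assume $\bar{\vec u}\in C^\infty(\bar\omega;\R^2)$ and recover the general case by a diagonal argument using the continuity of $J_0$ in the $H^1(\omega;\R^2)$ topology. In $\O_f$ we take the classical Kirchhoff--Love ansatz
$$
u^*_{\eps,\alpha}(x',x_3) = \bar u_\alpha(x') - \eps\, x_3\, \partial_\alpha \zeta(x'), \qquad
u^*_{\eps,3}(x',x_3) = \eps\, \zeta(x') + \eps^2 x_3\, d(x'),
$$
for suitable correctors $\zeta$ and $d$ (depending linearly on $\nabla\bar{\vec u}$) chosen so that $e_{i3}(\vec u^*_\eps) = O(\eps)$ and the film bulk energy converges to the film part of $J_0$; this is the standard construction in plate theory \cite{Ciarlet97}. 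In $\O_b$ we interpolate affinely in $x_3$ between the value $\bar{\vec u}$ (matched with the film trace at $x_3=0$) and $0$ (the substrate value at $x_3=-1$): take $u^*_{\eps,\alpha}(x',x_3) = (1+x_3)\bar u_\alpha(x')$ on $\O_b$ and $u^*_{\eps,3}$ chosen $O(\eps)$ and matching at the interfaces. Then $e_{\alpha\beta}(\vec u^*_\eps) = O(1)$, $e_{\alpha3}(\vec u^*_\eps) = -\tfrac12\bar u_\alpha + O(\eps)$, $e_{33}(\vec u^*_\eps)=O(\eps)$ on $\O_b$, so the $\eps^2$-prefactor kills the in-plane term, the $\eps^{-2}$-prefactor term stays bounded (indeed $\to 0$), and the middle group converges to $\frac12\int_{\O_b} 4\mu_b \cdot \tfrac14 |\bar{\vec u}|^2 = \frac{\mu_b}{2}\int_\omega |\bar{\vec u}|^2$ after integrating the $x_3$-independent integrand over the unit height. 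Finally, one must patch the film and bonding-layer constructions so that $\vec u^*_\eps \in H^1(\O;\R^3)$ (continuity of traces across $\{x_3=0\}$ and $\{x_3=-1\}$ and across $\partial\omega$), which is arranged by the matching choices above, and set $\vec u^*_\eps=0$ on $\O_s$; then $\vec u^*_\eps \to \vec u$ in $L^2(\O_f;\R^3)$ (in fact in $L^2(\O;\R^3)$) and $\limsup_\eps J_\eps(\vec u^*_\eps) \le J_0(\vec u)$, with equality in view of the lower bound.

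\medskip

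\textbf{Main obstacle.} The routine part is the film (Kirchhoff--Love) analysis, which is classical. The delicate point is the bonding-layer lower bound: one must show that the only way to keep $J_\eps(\cdot,\O_b)$ bounded is to essentially realize an affine-in-$x_3$ transition, so that the shear term $4\mu_b e_{\alpha3}e_{\alpha3}$ cannot be cheated down below $\frac{\mu_b}{2}|\bar{\vec u}|^2$ in the limit. This requires carefully combining (i) the constraint $e_{33}(\vec u_\eps)\to 0$ in $L^2(\O_b)$ forced by the $\eps^{-2}$ prefactor, which controls $\partial_3 u_{\eps,3}$; (ii) the trace matching at $x_3=0$ with the film (where $u_{\eps,3}\to 0$) and at $x_3=-1$ with the substrate; and (iii) convexity/lower semicontinuity of $\int_{\O_b}|e_{\alpha3}(\vec u_\eps)|^2$ together with the $x_3$-slicing identity $\int_{-1}^0 \partial_3 u_{\eps,\alpha}\,\dd x_3 = u_{\eps,\alpha}(\cdot,0)-u_{\eps,\alpha}(\cdot,-1) \to \bar u_\alpha$. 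Care is needed because $e_{\alpha3}$ also involves $\partial_\alpha u_{\eps,3}$, which is only controlled weakly; one handles this by noting that $u_{\eps,3}\to 0$ strongly enough on $\O_b$ (again from the $\eps^{-2}$ term plus boundary data) that $\partial_\alpha u_{\eps,3}$ contributes nothing to the liminf after integrating by parts against the $x_3$-direction, or alternatively by passing to the limit in the mixed term directly. This is exactly the mechanism that, in the fracture setting, fails (cf.\ Example \ref{ex:micro}), which is why the simpler proof given here is confined to the crack-free case.
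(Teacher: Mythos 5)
Your proposal follows essentially the same route as the paper's proof: the same scaled-strain bounds plus Korn/Poincar\'e compactness (using the Dirichlet condition in $\O_s$ propagated through $\O_b$ by the $e_{33}$ bound) forcing $u_3=0$ and $u_\alpha=\bar u_\alpha(x')$, the same relaxation over the transverse strain components for the film term, the same Cauchy--Schwarz/Jensen slicing in $x_3$ of the shear energy $4\mu_b e_{\alpha3}e_{\alpha3}$ in $\O_b$ linking the zero substrate value to the trace at $\{x_3=0\}$ (with the $\partial_\alpha(u_\eps)_3$ contribution neutralized because the $x_3$-average of $(u_\eps)_3$ vanishes strongly and its planar gradient only converges weakly to $0$), and the same recovery sequence (affine interpolation across $\O_b$, transverse corrector $\eps^2x_3h_\eps$ in $\O_f$, zero in $\O_s$). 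Two small cautions: the three groups in \eqref{eq:expru2}--\eqref{eq:expru3} are not individually nonnegative (the cross terms $2\lambda e_{\alpha\alpha}e_{33}$ must first be absorbed by completing the square, after which your scaling bounds do follow), and in $\O_b$ the vertical component of the recovery sequence must be taken identically zero (so the bending corrector $\zeta$, which is superfluous anyway since $u_3\equiv0$, must be dropped), since a genuinely $O(\eps)$ vertical displacement in $\O_b$ would make the $\eps^{-2}e_{33}^2$ term of \eqref{eq:expru3} contribute a non-vanishing $O(1)$ amount rather than tending to $0$ as claimed.
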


\begin{proof}
Although some parts of the proof are already well known (see \cite[Theorem 1.11.2]{Ciarlet97}), it will be convenient for us to reproduce the entire argument.

{\bf Step 1. Compactness.} Let $(\vec u_\eps) \subset L^2(\O;\R^3)$ be such that $\vec u_\eps \to \vec u$ strongly in $L^2(\O_f;\R^3)$. If  $\liminf_\eps J_\eps (\vec u_\eps)=+\infty$, there is nothing to prove. We therefore assume that $\liminf_\eps J_\eps (\vec u_\eps)<\infty$. Up to a subsequence, there is no loss of generality to suppose that 
$$J_\eps (\vec u_\eps)=J_\eps(\vec u_\eps,\O_f) + J_\eps(\vec u_\eps,\O_b)\leq C,$$ 
for some constant $C>0$ independent of $\eps$. The expression \eqref{eq:expru2} of the energy in the film $\O_f$ combined with Korn's inequality implies that $(\vec u_\eps)$ is actually bounded in $H^1(\O_f;\R^3)$, and that $\vec u_\eps \weakc \vec u$ weakly in $H^1(\O_f;\R^3)$ with $\vec u \in H^1(\O_f;\R^3)$. Contrary to the case of a standard linearly elastic plate model (see \cite{Ciarlet97}), we will show that, thanks to the Dirichlet condition in the substrate, the limit displacement $\vec u$ is planar instead of just Kirchhoff-Love type. Indeed, using also the expression of the energy \eqref{eq:expru2}--\eqref{eq:expru3}, the fact that $\vec u_\eps=0$ $\LL^3$-a.e. in $\O_s$, and Poincar\'e's inequality, we get that
$$\int_{\O_f}Ê|(u_\eps)_3|^2\dd x \leq \int_{\O_f \cup \O_b}Ê|e_{33}(\vec u_\eps)|^2\dd x \leq C \eps^2 \to 0,$$
so that $u_3=0$. Thanks again to the bound of the energy in the film  \eqref{eq:expru2}, we have
$$\|e_{\alpha 3} (\vec u_\eps)\|_{L^2(\O_f)} \leq C\eps \to 0,$$
which shows that $e_{\alpha 3}(\vec u)=0$. It thus follows that $\partial_3 u_\alpha=-\partial_\alpha u_3=0$ which implies that $u_\alpha(x',x_3)=\bar u_\alpha(x')$ for $\LL^3$-a.e. $x \in \O_f$, for some $\bar{\vec u} \in H^1(\omega;\R^2)$. We have thus identified the right limit space. 

\bigskip

{\bf Step 2. Lower bound.} We next derive the lower bound. Up to a further subsequence, we may assume that
$$\left \{\begin{array}{l}
\textstyle \eps^{-2} e_{33}(\vec u_\eps) \weakc \zeta_3 \smallskip \\
\textstyle \eps^{-1} e_{\alpha 3}(\vec u_\eps) \weakc \zeta_{\alpha}
\end{array}\right .
\ \text{ weakly in}\ L^2(\O_f),$$
for some functions $\zeta_1$, $\zeta_2$ and $\zeta_3 \in L^2(\O_f)$. Then, by lower semicontinuity of the norm with respect to weak convergence, we get that
\begin{multline*}
\liminf_{\eps \to 0} J_{\eps}(\vec u_\eps, \O_f)\\ 
\geq \frac{1}{2} \int_{\O_f}  \big [\lambda_f (e_{\alpha\alpha}(\bar{\vec u}) + \zeta_3)^2 + 2\mu_f e_{\alpha\beta}(\bar{\vec u})e_{\alpha\beta}(\bar{\vec u})+ 4\mu_f \zeta_\alpha \zeta_\alpha + 2\mu_f \zeta_3\zeta_3\big ] \dd x.
\end{multline*}
Minimizing with respect to $(\zeta_1,\zeta_2,\zeta_3)$, we find that the minimal value is attained when $\zeta_\alpha=0$ and $\zeta_3=-\frac{\lambda_f}{\lambda_f+2\mu_f}e_{\alpha\alpha}(\bar{\vec u})$, and thus
$$\liminf_{\eps \to 0} J_{\eps} (\vec u_\eps, \O_f) \geq \int_{\omega}\left [ \frac{\lambda_f \mu_f}{\lambda_f + 2\mu_f} e_{\alpha \alpha}(\bar{\vec u})e_{\beta \beta}(\bar{\vec u})
+ \mu_f e_{\alpha\beta}(\bar{\vec u})e_{\alpha\beta}(\bar{\vec u}) \right] \dd x.$$
We now examine the contribution of the bonding layer. To this aim, according to \eqref{eq:expru3}, isolating the only term of order $1$ leads to
\begin{multline*}
J_\eps(\vec u_\eps,\O_b) \geq  2 \mu_b \int_{\O_b}Êe_{\alpha 3}(\vec u) e_{\alpha 3}(\vec u)\dd x\\
\geq  \frac{\mu_b}{2} \int_{\omega}Ê \left|\int_{-1}^0 [\partial_3 (u_\eps)_1 + \partial_1 (u_\eps)_3] \dd x_3 \right|^2\dd x' +\frac{\mu_b}{2} \int_{\omega}Ê \left|\int_{-1}^0 [\partial_3 (u_\eps)_2 + \partial_2 (u_\eps)_3] \dd x_3 \right|^2\dd x',
\end{multline*}
thanks to the Cauchy-Schwarz inequality with respect to the $x_3$ variable. Since $\vec u_\eps=0$ $\LL^3$-a.e. in $\O_s$, then
$$\int_{-1}^0 \partial_3 \vec u_\eps(x',x_3)\dd x_3=\vec u_\eps(x',0) \quad \text{ for $\LL^2$-a.e. }x' \in \omega,$$
where $\vec u_\eps(\cdot,0)$ denotes the trace of $\vec u_\eps$ on $\{x_3=0\}$. On the other hand, setting $\bar u_3^\eps=\int_{-1}^0 (u_\eps)_3(\cdot,x_3)\dd x_3 \in H^1(\omega)$, we have 
$$\int_{-1}^0 \partial_\alpha (u_\eps)_3(x',x_3)\dd x_3=\partial_\alpha  \bar u_3^\eps (x')\quad \text{ for $\LL^2$-a.e. }x' \in \omega.$$
Gathering everything, we infer that
\begin{equation}\label{1205}
J_\eps(\vec u_\eps,\O_b)  \geq  \frac{\mu_b}{2} \int_{\omega}Ê | (u_\eps)_1(x',0) + \partial_1  \bar u_3^\eps (x') |^2\dd x' + \frac{\mu_b}{2} \int_{\omega}Ê | (u_\eps)_2(x',0) + \partial_2  \bar u_3^\eps (x') |^2\dd x'.
\end{equation}
According to the trace theorem, and since $\bar u_\alpha$ is independent of $x_3$, we have $(u_\eps)_\alpha(\cdot,0) \to \bar u_\alpha$ strongly in $L^2(\omega)$. On the other hand, the energy in the bonding layer  \eqref{eq:expru3} together with the Cauchy-Schwarz and Poincar\'e inequalities yield
$$\int_\omega |\bar u_3^\eps|^2\dd x' \leq \int_{\O_b}|e_{33}(\vec u_\eps)|^2\dd x \leq C \eps^2 \to 0,$$
while \eqref{1205} shows that the sequence $(\nabla \bar u_3^\eps)$ in bounded in $L^2(\omega;\R^2)$. Consequently, $\nabla \bar u_3^\eps \weakc 0$ weakly in $L^2(\omega;\R^2)$, and combining all the convergences established so far, we deduce that
$$\liminf_{\eps \to 0} J_\eps(\vec u_\eps,\O_b)  \geq  \frac{\mu_b}{2} \int_{\omega}Ê | \bar{\vec u} |^2\dd x',$$
which completes the proof of the lower bound.

\bigskip

{\bf Step 3. Upper bound}. We assume without loss of generality that $\vec u=(\bar{\vec u},0)$ for some $\bar{\vec u}Ê\in H^1(\omega;\R^2)$, otherwise the limit energy is infinite. We now define a recovery sequence $({\vec u}^*_\eps)_{\eps>0}$. For all $\eps>0$, let
$${\vec u}^*_\varepsilon( x', x_3 ) =\left\{
\begin{array}{ll}
\big(\bar{\vec u}(x'), \eps^2 x_3 h_\eps( x') \big) & \text{ if }x \in \O_f,\\
(x_3+1)(\bar{\vec u}(x'),0 ) & \text{ if }x \in \O_b,\\
0 &\text{ if } x \in \O_s,
\end{array}
\right.$$
where $(h_\eps)_{\eps>0}$ is a sequence in $\C_c^\infty(\omega)$ such that
\begin{equation}\label{eq:heps3}
h_\eps \to - \frac{\lambda_f}{\lambda_f+2\mu_f} \vec e_{\alpha \alpha}(\bar{\vec u})  \text{ in } L^2(\omega), \quad \lim_{\eps \to 0} \varepsilon \|\nabla h_{\eps}\|_{L^2(\omega)}= 0.
\end{equation}
Clearly, ${\vec u}^*_{\varepsilon}\in H^1(\O;\R^3)$ and ${\vec u}_\eps^*=0$ $\LL^3$-a.e. in $\O_s$. Using \eqref{eq:expru2} we have that 
\begin{align*}
J_\eps ({\vec u}^*_\eps, \Omega_f)	&= \frac{1}{2}\int_{\Omega_f} \big [\lambda_f  e_{\alpha\alpha}(\bar{\vec u})  e_{\beta \beta}(\bar{\vec u})+ 2\mu_f  e_{\alpha \beta}(\bar{\vec u})  e_{\alpha \beta}(\bar{\vec u})\big ] \dd x\\ &
 \quad + \frac{1}{2\varepsilon^2} \int_{\Omega_f} \left [2\lambda_f  e_{\alpha \alpha}(\bar{\vec u}) \varepsilon^2  h_\eps + \mu_f \eps^4x_3^2|\nabla h_\eps|^2 \right ] \dd x \\ 
 & \quad + \frac{1}{2\varepsilon^4} \int_{\Omega_f} (\lambda_f + 2 \mu_f)\varepsilon^4  |h_\eps|^2 \dd x,
\end{align*}
and according to the convergence properties \eqref{eq:heps3}, we get that
\begin{align*}
\nonumber
\lim_{\eps \to 0}& J_{\varepsilon}({\vec u}^*_\eps, \Omega_f) = \frac{1}{2} \int_{\omega} \big [\lambda_f  e_{\alpha\alpha}(\bar{\vec u})  e_{\beta \beta}(\bar{\vec u})	+ 2\mu_f  e_{\alpha \beta}(\bar{\vec u})  e_{\alpha \beta}(\bar{\vec u})\big ] \dd x'\\ 
&	\ - \frac{1}{2} \int_{\omega} \frac{2\lambda_f^2}{\lambda_f+2\mu_f}  e_{\alpha \alpha}(\bar{\vec u})  e_{\beta\beta}(\bar{\vec u}) \dd x' + \frac{1}{2} \int_{\omega} \frac{\lambda_f^2}{\lambda_f + 2 \mu_f}  e_{\alpha \alpha}(\bar{\vec u})  e_{\beta\beta}(\bar{\vec u}) \dd x' \\ \nonumber 
&= \frac{1}{2} \int_{\omega} \left [ \frac{2\lambda_f\mu_f}{\lambda_f+2\mu_f}  e_{\alpha\alpha}(\bar{\vec u})  e_{\beta \beta}(\bar{\vec u}) 	+ 2\mu_f  e_{\alpha \beta}(\bar{\vec u})  e_{\alpha \beta}(\bar{\vec u})\right ] \dd x' .
\end{align*}
On the other hand, \eqref{eq:expru3} yields
$$J_\eps ({\vec u}^*_\eps, \Omega_b) = \frac{\eps^2}{2}\int_{\Omega_b} (x_3+1)^2\big [\lambda_b  e_{\alpha\alpha}(\bar{\vec u})  e_{\beta \beta}(\bar{\vec u})+ 2\mu_b e_{\alpha \beta}(\bar{\vec u})  e_{\alpha \beta}(\bar{\vec u})\big ] \dd x+ \frac{\mu_b}{2} \int_\omega \bar u_\alpha \bar u_\alpha \dd x',$$
and thus
$$\lim_{\eps \to 0} J_{\varepsilon}({\vec u}^*_\eps, \Omega_f) =\frac{\mu_b}{2} \int_\omega |\bar{\vec u}|^2\dd x',$$
which completes the proof of the upper bound.
\end{proof}

\section{Transverse cracks in thin films}\label{sec:5}

\noindent In this section, we assume that the body can fracture. We first only address the analysis of the film $\O_f$ in order to highlight the appearance of transverse cracks in the reduced model. This property is already known in the framework of nonlinear elasticity where energies depend on the deformation gradient \cite{B1,B2,BFLM,BF}. The difficulty here is to consider a linearly elastic material outside the crack so that the energy depends on the elastic strain. 

\subsection{Compactness}\label{sec:compactness}

From a mathematical point of view, the natural functional setting is to consider displacement fields $\vec u \in SBD(\O_f)$. For technical reasons, we also assume that all the
 deformations take place in a fixed container $K$ which is a compact subset of $\R^3$. Therefore, we assume that any displacement is uniformly bounded by some fixed positive constant $M>0$. 

Throughout this section, we assume that $(\vec u_\eps)_{\eps>0} \subset SBD(\O_f)$ is a sequence of displacements in the film such that $Ê\|\vec u_\eps\|_{L^\infty(\O_f)}Ê\leq M$, and
$$\sup_{\eps>0} E_{\eps}(\vec u_\eps,\O_f)<\infty.$$

We establish that any admissible sequence of displacements with uniformly bounded energy converges to some limit displacement having a Kirchhoff-Love type structure.

\begin{proposition}\label{prop:compactness}
Up to a subsequence, there exists $\vec u\in SBD(\O_f) \cap L^\infty(\O_f;\R^3)$ such that
\begin{enumerate}[i)]
\item $\vec u_\eps \to \vec u$ strongly in $L^2(\O_f;\R^3)$ and $\vec u_\eps \weakcs \vec u$ weakly* in $L^\infty(\O_f;\R^3)$;
\item $ e(\vec u_\eps) \weakc  e(\vec u)$ weakly in $L^2(\O_f; \M_{\rm sym}^{3\times 3})$;
\item $e_{\alpha 3}(\vec u)=e_{33}(\vec u)=0$ $\LL^3$-a.e.~in $\O_f$ and $(\nu_{\vec u})_3=0$ $\mathcal H^2$-a.e.~on $J_{\vec u}\cap \O_f$.
\end{enumerate}
\end{proposition}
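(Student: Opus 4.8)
\textbf{Proof plan for Proposition \ref{prop:compactness}.}

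The plan is to run a standard compactness argument for sequences of bounded energy in $SBD$, but with the crucial extra information coming from the anisotropic scaling of the energy $E_\eps$ and the $L^\infty$ bound. First I would exploit the energy bound to control the surface part: from $E_\eps(\vec u_\eps,\O_f)\le C$ one gets $\kappa_f\int_{J_{\vec u_\eps}\cap\O_f}|((\nu_{\vec u_\eps})',\tfrac1\eps(\nu_{\vec u_\eps})_3)|\dd\HH^2\le C$, which in particular bounds $\HH^2(J_{\vec u_\eps}\cap\O_f)$ uniformly and, more importantly, forces $\int_{J_{\vec u_\eps}}|(\nu_{\vec u_\eps})_3|\dd\HH^2\le C\eps$. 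Simultaneously the bulk part gives $\|e(\vec u_\eps)\|_{L^2(\O_f)}\le C$ (all three groups of terms in \eqref{eq:expru2} have nonnegative-definite structure, so in particular the order-one term controls $e_{\alpha\beta}$, and the negative powers of $\eps$ only help), together with the refined bounds $\|e_{\alpha3}(\vec u_\eps)\|_{L^2}\le C\eps$ and $\|e_{33}(\vec u_\eps)\|_{L^2}\le C\eps^2$. With $\|\vec u_\eps\|_{L^\infty}\le M$ and these bounds, the compactness theorem in $SBD$ (the $SBD$ analogue of Ambrosio's $SBV$ compactness, as in \cite{BeCoDM98,C2}) applies: up to a subsequence $\vec u_\eps\to\vec u$ in $L^1(\O_f;\R^3)$ with $\vec u\in SBD(\O_f)$, $e(\vec u_\eps)\weakc e(\vec u)$ weakly in $L^2$, and $E^c\vec u=0$. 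The $L^1$ convergence upgrades to strong $L^2$ convergence by the uniform $L^\infty$ bound and dominated convergence, and $\vec u\in L^\infty(\O_f;\R^3)$ with $\vec u_\eps\weakcs\vec u$ weak-$*$ in $L^\infty$; this gives (i) and (ii).

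For (iii), the bulk identities are immediate from weak lower semicontinuity: since $e_{\alpha3}(\vec u_\eps)\weakc e_{\alpha3}(\vec u)$ and $\|e_{\alpha3}(\vec u_\eps)\|_{L^2}\to0$, we get $e_{\alpha3}(\vec u)=0$, and similarly $\|e_{33}(\vec u_\eps)\|_{L^2}\to 0$ yields $e_{33}(\vec u)=0$ $\LL^3$-a.e. The statement $(\nu_{\vec u})_3=0$ $\HH^2$-a.e.\ on $J_{\vec u}\cap\O_f$ is the delicate point. The idea is that the jump set of the limit should ``inherit verticality'' from the bound $\int_{J_{\vec u_\eps}}|(\nu_{\vec u_\eps})_3|\dd\HH^2\le C\eps$. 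Concretely, I would test against smooth vector fields $\vec\varphi\in\C_c^\infty(\O_f;\R^3)$ and use the measure representation $E\vec u_\eps=e(\vec u_\eps)\LL^3+[\vec u_\eps]\odot\nu_{\vec u_\eps}\HH^2\res J_{\vec u_\eps}$. Looking at the $(3,3)$ component: $\int_{\O_f}(u_\eps)_3\,\partial_3\varphi\,\dd x=-\int_{\O_f}e_{33}(\vec u_\eps)\varphi\,\dd x-\int_{J_{\vec u_\eps}}[(u_\eps)_3](\nu_{\vec u_\eps})_3\varphi\,\dd\HH^2$; the first right-hand term vanishes in the limit and, since $|[(u_\eps)_3]|\le 2M$, the second is bounded by $2M\|\varphi\|_\infty\int_{J_{\vec u_\eps}}|(\nu_{\vec u_\eps})_3|\dd\HH^2\le C\eps\to0$. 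Passing to the limit and using $(u_\eps)_3\to u_3$ in $L^2$ shows $\partial_3 u_3=0$ distributionally, but more is needed: I would instead run this argument componentwise to conclude that the full singular measure $[\vec u]\odot\nu_{\vec u}\,\HH^2\res J_{\vec u}$ has vanishing $i3$-components, i.e.\ $([\vec u]\odot\nu_{\vec u})_{i3}=0$ $\HH^2$-a.e.\ on $J_{\vec u}$. Since $[\vec u](x)\ne0$ on $J_{\vec u}$, the linear-algebra fact that $a\odot\nu$ has all its $i3$-entries zero (with $a\ne0$) forces either $\nu_3=0$ or $a=\text{(multiple of }e_3)$ and $a_3\nu_3\ne0$ simultaneously contradicting the off-diagonal entries — a short case analysis shows $\nu_3=0$. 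Care is needed here because the weak-$*$ convergence of $E\vec u_\eps$ to $E\vec u$ as measures does not by itself separate the singular parts; the correct route is the one above, deriving an exact distributional identity in the limit and then reading off the structure of the limiting jump part from the Lebesgue–singular decomposition of $E\vec u$.

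The main obstacle I anticipate is precisely this last point: justifying rigorously that the $\eps$-small weight on $(\nu_{\vec u_\eps})_3$ passes to a \emph{pointwise $\HH^2$-a.e.} verticality constraint on $J_{\vec u}$, rather than merely a statement about $\partial_3 u_i$ in the sense of distributions (which would not exclude a Cantor-type or jump contribution a priori). The resolution relies on: (a) the $SBD$ compactness theorem guaranteeing $E^c\vec u=0$, so $E\vec u$ has only absolutely continuous and jump parts; (b) the $L^\infty$ bound making $|[\vec u_\eps]|$ uniformly bounded so that the surface integrals are genuinely $O(\eps)$; and (c) the rectifiability of $J_{\vec u}$ together with the algebraic characterization of symmetrized tensor products. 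Once the identity $([\vec u]\odot\nu_{\vec u})_{i3}=0$ $\HH^2$-a.e.\ on $J_{\vec u}$ is in hand, (iii) follows. I would also note in passing that this verticality of $J_{\vec u}$, combined with $e_{\alpha3}(\vec u)=e_{33}(\vec u)=0$, is exactly what forces the Kirchhoff–Love structure: $e_{33}(\vec u)=0$ and $E^c\vec u=0$ with $J_{\vec u}$ vertical give $\partial_3 u_3=0$ in the strong sense on slices, hence $u_3=u_3(x')$, and then $e_{\alpha3}(\vec u)=0$ gives $\partial_3 u_\alpha=-\partial_\alpha u_3$, so $u_\alpha$ is affine in $x_3$; this structural conclusion is what Proposition \ref{prop:properties} will record.
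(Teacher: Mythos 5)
Your steps for (i), (ii) and for the bulk identities $e_{\alpha 3}(\vec u)=e_{33}(\vec u)=0$ coincide with the paper's argument (energy bounds, the $SBD$ compactness theorem of \cite{BeCoDM98}, weak $L^2$ convergence of the components of $e(\vec u_\eps)$ together with the vanishing of their norms). The gap is in your treatment of the jump-normal statement. You propose to show that \emph{all} singular $(i,3)$-components $([\vec u]\odot\nu_{\vec u})_{i3}$ vanish and then conclude $(\nu_{\vec u})_3=0$ by linear algebra. That intermediate claim is false in general: as the paper remarks right after Proposition \ref{prop:compactness}, for limits of finite-energy sequences the singular part of $E_{\alpha 3}\vec u$ equals $\frac{[\vec u]_3(\nu_{\vec u})_\alpha}{2}\,\HH^2\res J_{\vec u}$, which need not vanish (take $u_3$ jumping across a vertical crack; indeed $J_{u_3}\times(0,1)\asubset J_{\vec u}$ in Proposition \ref{prop:properties}). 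Your estimates also cannot deliver it: in the duality identity for the $(\alpha,3)$ component the jump contribution is $\tfrac12\int_{J_{\vec u_\eps}}\big([\vec u_\eps]_\alpha(\nu_{\vec u_\eps})_3+[\vec u_\eps]_3(\nu_{\vec u_\eps})_\alpha\big)\varphi\,\dd\HH^2$, and only the first summand is $O(\eps)$; the second is merely $O(1)$ (bounded by $2M\,\HH^2(J_{\vec u_\eps}\cap\O_f)$) and has no reason to disappear. What survives of your argument is the $(3,3)$ component alone, which gives $E_{33}\vec u=0$, hence $[\vec u]_3(\nu_{\vec u})_3=0$ $\HH^2$-a.e.\ on $J_{\vec u}$; this yields $(\nu_{\vec u})_3=0$ only where $[u_3]\neq 0$ and does not exclude, say, a horizontal piece of $J_{\vec u}$ across which only the in-plane components jump ($[\vec u]_3=0$, $\nu_{\vec u}=\pm e_3$). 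Distributional convergence of $E\vec u_\eps$ cannot rule this out, precisely because it does not separate the jump part of the limit from the merely weakly-* converging jump parts $[\vec u_\eps]_3(\nu_{\vec u_\eps})_\alpha\,\HH^2\res J_{\vec u_\eps}$ of the sequence.

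The missing ingredient is the one the paper invokes: the (directional) lower semicontinuity of the surface term under the convergences provided by the $SBD$ compactness theorem, \cite[Corollary 1.2]{BeCoDM98}, which is proved by slicing and is exactly designed to transfer the bound $\int_{\O_f\cap J_{\vec u_\eps}}|(\nu_{\vec u_\eps})_3|\,\dd\HH^2\leq C\eps$ to the limit: it gives $\int_{\O_f\cap J_{\vec u}}|(\nu_{\vec u})_3|\,\dd\HH^2\leq \liminf_\eps \int_{\O_f\cap J_{\vec u_\eps}}|(\nu_{\vec u_\eps})_3|\,\dd\HH^2=0$, hence $(\nu_{\vec u})_3=0$ $\HH^2$-a.e.\ on $J_{\vec u}\cap\O_f$. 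Replacing your distributional step by this citation (or an equivalent slicing argument) repairs the proof; the rest of your proposal matches the paper. A side remark: your closing claim that verticality of $J_{\vec u}$ plus $e_{i3}(\vec u)=0$ directly gives $u_3=u_3(x')$ and $u_\alpha$ affine in $x_3$ understates the difficulty---since $E_{\alpha 3}\vec u$ retains a singular part, this is the content of Proposition \ref{prop:properties}, whose proof in the paper requires a careful mollification and projection argument rather than a pointwise ODE in $x_3$; but that is outside the statement under review.
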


\begin{proof}
From the hypotheses and the definition of $E_{\eps}(\cdot,\O_f)$, we have that
$$\|\vec u_\eps\|_{L^\infty(\O_f)} + \| e(\vec u_\eps)\|_{L^2(\O_f)} + \HH^2(J_{\vec u_\eps}\cap \O_f) \leq C,$$
for some constant $C>0$ independent of $\eps$. According to the compactness theorem in $SBD$ \cite[Theorem~1.1]{BeCoDM98}, we deduce the existence of a subsequence (not relabeled) and a function $\vec u\in SBD(\O_f)$ such that $\vec u_\eps\to \vec u$ strongly in $L^2(\O_f;\R^3)$, $\vec u_\eps\weakcs \vec u$ weakly* in $L^\infty(\O_f;\R^3)$, $ e(\vec u_\eps) \weakc  e(\vec u)$ weakly in $L^2(\O_f; \M_{\rm sym}^{3\times 3})$, and
\begin{align} \label{eq:liminfSE}
\HH^2(J_{\vec u}\cap \O_f) &\leq \liminf_{\eps\to 0} \HH^2(J_{\vec u_\eps} \cap \O_f) \nonumber\\
&\leq \liminf_{\eps \to 0} \int_{\O_f \cap J_{\vec u_\eps}} \left |\left ( ( \nu_{\vec u_\eps})', \frac{1}{\eps} (\nu_{\vec u_\eps})_3 \right ) \right | \dd\HH^2.
\end{align}

Using the expression of the energy in the film, we deduce that 
\begin{equation} \label{eq:bound_alpha3}
\|e_{\alpha 3} (\vec u_\eps)\|_{L^2(\O_f)} + \int_{\O_f \cap J_{\vec u_\eps}} |(\nu_{\vec u_\eps})_3| \dd\HH^2 \leq C\eps
\end{equation}
and
$$\|e_{33}(\vec u_\eps)\|_{L^2(\O_f)} \leq C\eps^2$$
for some $C$ independent of $\eps$. Using the lower semicontinuity of the left hand side of both equations with respect to the convergences established for $(\vec u_\eps)_{\eps>0}$ (see \cite[Corollary 1.2]{BeCoDM98})
we conclude that $e_{\alpha 3} (\vec u)=e_{33} (\vec u)=  0$ $\LL^3$-a.e. in $\O_f$, and that $( \nu_{\vec u})_3=0$ $\mathcal H^2$-a.e.\ on $J_{\vec u} \cap \O_f$.
\end{proof}

In the sequel, $\vec u$ denotes a displacement as in the conclusion of Proposition \ref{prop:compactness}. Our next goal is to get a more precise structure of such displacements. Contrary to the case of linear elasticity (see \cite{Ciarlet97}) or linearly elastic-perfectly plastic plates (see \cite{DM}), they in general are not of Kirchhoff-Love type ({\it i.e.} such that $E_{i3}\vec u=0$) since we do not control the full distributional  strain $E\vec u$. In particular, the singular part of the shearing strain $E_{\alpha 3} \vec u$ is given by $\frac{[\vec u]_3 \nu_\alpha}{2}\HH^2 \res J_{\vec u}$ which might not vanish. However, we shall prove below that they have the same structure in the sense that the transverse displacement $u_3$ only depends on the planar variable $x'$, while the in-plane displacement $(u_1,u_2)$ is affine with respect to the transverse variable $x_3$.

\begin{proposition}\label{prop:properties}
Let $\vec u\in SBD(\O_f) \cap L^\infty(\O_f;\R^3)$ be such that $e_{i3}(\vec u)= 0$ $\LL^3$-a.e. in $\O_f$, and $(\nu_{\vec u})_3=0$ $\mathcal H^2$-a.e.\ on $J_{\vec u}\cap \O_f$. Then the following properties hold:
\begin{itemize}
\item the function $u_3$ is independent of $x_3$ and it (is identified to a function which) belongs to $SBV(\omega) \cap L^\infty(\omega)$. In addition, its approximate gradient $\nabla u_3=(\partial_1 u_3,\partial_2 u_3)\in SBD(\omega) \cap L^\infty(\omega;\R^2)$; 
\item for $\LL^3$-a.e. $(x',x_3) \in \O_f$, 
\begin{equation}\label{eq:ualphamed}
u_\alpha( x', x_3)= \bar u_\alpha( x') + \left(\frac{1}{2}-x_3\right)\partial_\alpha u_3( x'),
\end{equation}
where $\bar u_\alpha:= \int_0^1 u_\alpha(\cdot, x_3)\dd x_3$, and $\bar{\vec u}:=(\bar u_1, \bar u_2) \in SBD(\omega) \cap L^\infty(\omega;\R^2)$;
\item $J_{\vec u} \cong (J_{\bar{\vec u}}Ê\cup J_{u_3}Ê\cup J_{\nabla u_3}) \times (0,1)$; 
\end{itemize}
\end{proposition}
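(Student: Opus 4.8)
The plan is to exploit the constraints $e_{i3}(\vec u) = 0$ in $\O_f$ and $(\nu_{\vec u})_3 = 0$ on $J_{\vec u} \cap \O_f$ to show first that $u_3$ is independent of $x_3$, and then that $u_\alpha$ is affine in $x_3$. The key observation is that these constraints mean the full distributional strain components $E_{i3}\vec u$ have no absolutely continuous part and no jump part in the $x_3$-direction, so only a Cantor part could remain; I expect to rule that out too. Concretely, $E_{33}\vec u = e_{33}(\vec u)\LL^3 + [\vec u]_3 (\nu_{\vec u})_3 \HH^2\res J_{\vec u} + E^c_{33}\vec u$, and the first two terms vanish, so $\partial_3 u_3 = E_{33}\vec u = E^c_{33}\vec u$, a measure with no absolutely continuous or jump part. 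But $\partial_3 u_3$ is the third partial of a single scalar $BD$-component; by a one-dimensional slicing/Fubini argument (integrating the measure $E^c_{33}\vec u$ against test functions of the form $\varphi(x')\psi(x_3)$ and using that on $\LL^1$-a.e.\ vertical slice the slice of $u_3$ is a $BV$ function of $x_3$ whose derivative is the slice of $E^c_{33}\vec u$), one shows the vertical slices of $u_3$ are constant, hence $u_3(x',x_3) = u_3(x')$ for a.e.\ $x_3$. I would cite the standard slicing theory for $BD$ (e.g.\ \cite{AmCoDM97}) to justify that $(E_{33}\vec u)$ restricted to vertical lines coincides a.e.\ with the distributional derivative of the one-dimensional slices, and that a scalar function on $(0,1)$ whose distributional derivative vanishes is constant.

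Next I would handle the shears. From $e_{\alpha 3}(\vec u) = 0$ and $(\nu_{\vec u})_\alpha (\nu_{\vec u})_3 = 0$ on $J_{\vec u}$ (which follows from $(\nu_{\vec u})_3 = 0$), we get $E_{\alpha 3}\vec u = E^c_{\alpha 3}\vec u$, and since $2E_{\alpha 3}\vec u = \partial_3 u_\alpha + \partial_\alpha u_3$ as distributions, we obtain $\partial_3 u_\alpha = 2E^c_{\alpha 3}\vec u - \partial_\alpha u_3$. Now $u_3$ is already known to depend only on $x'$ and to lie in $SBV(\omega)\cap L^\infty$ (this $SBV$ regularity of $u_3$ as a function on $\omega$ needs to be established: one slices $\O_f$ horizontally by planes $x_3 = \text{const}$, uses that $u_3$ restricted to such a slice is the trace of a $BD$ function hence in $L^1$, and transfers the $BD$-structure of $\vec u$ on $\O_f$ into a bounded-variation structure for $u_3$ on $\omega$, with $J_{u_3}$ horizontal-slice of $J_{\vec u}$ and $\nabla u_3$ controlled; the finiteness of $\HH^2(J_{\vec u})$ and $\HH^1$ of horizontal slices of rectifiable sets via the coarea/slicing formula gives $\HH^1(J_{u_3}) < \infty$). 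Granting that, $\partial_\alpha u_3$ is a measure on $\omega$ (pulled back to $\O_f$ as independent of $x_3$), so integrating the relation $\partial_3 u_\alpha = 2E^c_{\alpha 3}\vec u - \partial_\alpha u_3$ in $x_3$ along vertical slices — again via the one-dimensional slicing of $BD$ — shows that on $\LL^2$-a.e.\ vertical line the slice of $u_\alpha$ is an affine function of $x_3$ with slope $-\partial_\alpha u_3(x')$ (the Cantor shear term $E^c_{\alpha 3}\vec u$, being supported where $u$ slices are $BV$ but contributes no jump nor $L^1$-density, must also have vanishing vertical slices by the same argument that killed $E^c_{33}$). This yields $u_\alpha(x',x_3) = \bar u_\alpha(x') + (\tfrac12 - x_3)\partial_\alpha u_3(x')$ with $\bar u_\alpha := \int_0^1 u_\alpha(\cdot,x_3)\dd x_3$; the normalization $\tfrac12 - x_3$ is just chosen so that the $x_3$-average is $\bar u_\alpha$. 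That $\bar{\vec u}\in SBD(\omega)\cap L^\infty(\omega;\R^2)$ and that $\nabla u_3 \in SBD(\omega)\cap L^\infty(\omega;\R^2)$ then follow by integrating the representation \eqref{eq:ualphamed} in $x_3$ and reading off $E\bar{\vec u}$ and $E(\nabla u_3)$ from $E\vec u$ restricted to $\O_f$, using Fubini to split off the $x_3$-dependence; boundedness is inherited from the $L^\infty$ bound on $\vec u$.

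Finally, for the jump-set identity $J_{\vec u}\cong (J_{\bar{\vec u}}\cup J_{u_3}\cup J_{\nabla u_3})\times(0,1)$: the inclusion $J_{\vec u}\asubset (J_{\bar{\vec u}}\cup J_{u_3}\cup J_{\nabla u_3})\times(0,1)$ follows from \eqref{eq:ualphamed} — at a point $(x',x_3)$ with $x'\notin J_{\bar{\vec u}}\cup J_{u_3}\cup J_{\nabla u_3}$, all of $\bar u_\alpha$, $u_3$, $\partial_\alpha u_3$ have approximate limits at $x'$, so by the explicit formula $\vec u$ has an approximate limit at $(x',x_3)$, i.e.\ $(x',x_3)\notin S_{\vec u}$, and $\HH^2(S_{\vec u}\setminus J_{\vec u}) = 0$. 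For the converse, at a point $x'$ of approximate jump of, say, $\bar{\vec u}$ (or of $u_3$, or of $\nabla u_3$) with normal $\nu'\in\mathbb S^1$, formula \eqref{eq:ualphamed} exhibits $\vec u$ as having a jump across the vertical plane through $x'$ with in-plane normal $\nu'$ for $\LL^1$-a.e.\ $x_3$ (the jump being $([\bar{\vec u}] + (\tfrac12-x_3)[\nabla u_3], [u_3])$, which is nonzero for a.e.\ $x_3$ whenever at least one of $[\bar{\vec u}], [u_3], [\nabla u_3]$ is nonzero — the affine-in-$x_3$ in-plane part vanishes for at most one value of $x_3$), so such points lie in $J_{\vec u}$; a Fubini argument in $x_3$ together with the $\HH^1$-rectifiability of $J_{\bar{\vec u}}, J_{u_3}, J_{\nabla u_3}$ upgrades this to the stated $\HH^2$-equality of the full cylinders. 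The main obstacle I anticipate is the rigorous slicing argument showing the Cantor parts $E^c_{i3}\vec u$ have vanishing vertical slices — i.e.\ turning "$\partial_3(\text{component}) = $ a measure that is purely Cantor" into "the vertical slices are constant/affine" — which requires carefully invoking the one-dimensional characterization of $BD$ via slices and the fact that a Cantor measure disintegrates with $\LL^1$-a.e.\ vertical-slice component being the (atomless, $\LL^1$-singular) derivative of a $BV$ slice, combined with the extra rigidity that there is no $L^1$-density; some care is also needed in establishing the $SBV$ regularity of $u_3$ and $\nabla u_3$ as functions on $\omega$ from the $SBD$ structure on $\O_f$, again via horizontal slicing and the coarea formula for rectifiable sets.
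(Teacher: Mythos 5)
There is a genuine gap, and it sits at the heart of your argument for the in-plane components. You claim that from $e_{\alpha3}(\vec u)=0$ and $(\nu_{\vec u})_3=0$ one gets $E_{\alpha3}\vec u=E^c_{\alpha3}\vec u$ (hence essentially zero, since $\vec u\in SBD(\O_f)$ already forces $E^c\vec u=0$ by definition, so the Cantor part you worry about is a non-issue). This is false: the jump part of the shear is $\tfrac12\big([\vec u]_\alpha(\nu_{\vec u})_3+[\vec u]_3(\nu_{\vec u})_\alpha\big)\HH^2\res J_{\vec u}$, and the hypothesis $(\nu_{\vec u})_3=0$ kills only the first summand. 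In general $E_{\alpha3}\vec u=\tfrac{[\vec u]_3(\nu_{\vec u})_\alpha}{2}\HH^2\res J_{\vec u}\neq 0$ (the paper points this out explicitly just before the proposition: the limit displacements are \emph{not} of Kirchhoff--Love type in the sense $E_{i3}\vec u=0$). Consequently your relation ``$\partial_3 u_\alpha=2E^c_{\alpha3}\vec u-\partial_\alpha u_3$, integrate along vertical slices'' does not yield that vertical slices of $u_\alpha$ are affine; moreover, for a $BD$ function the individual distributions $D_3u_\alpha$ and $D_\alpha u_3$ are not a priori measures (only their symmetric combinations are), and one-dimensional $BD$ slicing in the direction $e_3$ controls only $u_3$, not $u_\alpha$, so the slicing machinery you invoke does not separate $D_3u_\alpha$ from $D_\alpha u_3$. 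For the same reason your preliminary claim that $u_3\in SBV(\omega)$ can be obtained by ``horizontal slicing'' is unsubstantiated: horizontal $BD$ slices control $u_\alpha$ along horizontal lines, not $u_3$.

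The missing idea is the one the paper uses: mollify, write $u_\alpha\ast\rho_\delta(x',x_3)=u_\alpha\ast\rho_\delta(x',0)+2\int_0^{x_3}e_{\alpha3}(\vec u\ast\rho_\delta)\,\mathrm{d}s-\int_0^{x_3}\partial_\alpha(u_3\ast\rho_\delta)\,\mathrm{d}s$, and show that the shear term tends to $0$ for $\LL^3$-a.e.\ point \emph{despite} $E_{\alpha3}\vec u\neq0$, because by the coarea formula and $(\nu_{\vec u})_3=0$ the projection $\pi(J_{\vec u})$ is $\LL^2$-negligible, so the pushforward $\mu=\pi_\#(\HH^2\res J_{\vec u})$ is singular with respect to $\LL^2$ and its density $\mu(Q'(x',\delta))/\delta^2$ vanishes a.e. Only after this does one get the representation \eqref{eq:ualphamed}, and only then does the identity $D_\alpha u_3=\partial_\alpha u_3\,\LL^3+[\vec u]_3(\nu_{\vec u})_\alpha\HH^2\res J_{\vec u}$ (via uniqueness of the Lebesgue decomposition) give $u_3\in SBV(\omega)$; the $SBD$ regularity of $\bar{\vec u}$ and $\nabla u_3$ then comes from integrating $E\vec u$ in $x_3$ against planar test functions and applying the $SBD$ criterion of Ambrosio--Coscia--Dal Maso, not from ``reading off'' $E\bar{\vec u}$ directly. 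Your treatment of the jump-set identity is closer in spirit to the paper's (Fubini over the rectifiable set times $(0,1)$, the affine in-plane jump vanishing for at most one $x_3$), though the case of points in $J_{\bar{\vec u}}\cap J_{\nabla u_3}$ with matching normals and no jump of $u_3$ needs the careful contradiction argument the paper gives; but as written, the core of your proof of \eqref{eq:ualphamed} would fail.
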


\begin{proof}
{\bf Step 1.} First of all, by virtue of \eqref{eq:distSBD}, the distributional derivative of $u_3$ with respect to $x_3$ satisfies
$$D_3u_3=E_{33}\vec u= e_{33}(\vec u) \mathcal L^3 + [\vec u]_3(\nu_{\vec u})_3\mathcal H^2\res J_{\vec u}=0.$$
This implies that $u_3$ is independent of $x_3$, and that it can be identified to a function defined on $\omega$.

\medskip

{\bf Step 2.} We next show that $u_3 \in SBV(\omega)$ and that formula \eqref{eq:ualphamed} holds. This will be obtained thanks to a suitable mollification of $\vec u$. We first extend $\vec u$ to the whole space in the following way: since the trace of an $SBD(\O_f)$ function belongs to $L^1(\partial \O_f;\R^3)$ (see \cite[Theorem 3.2]{B3}), according to Gagliardo's Theorem, $\vec u$ may be extended to $\R^3$ by a function, still denoted by $\vec u$, that is compactly supported in $\R^3$ and such that $\vec u \in W^{1,1}(\R^3\setminus \O_f;\R^3)$ with $|E{\vec u}|(\partial \O_f)=0$.

Let $\chi \in \C^\infty_c(\R)$ be an even and non negative function such that $\int_\R \chi(t)\dd t=1$ and ${\rm Supp}Ê\chi \subset (-1,1)$. For all $x=(x_1,x_2,x_3)=(x',x_3) \in \R^3$, we define $\bar \rho(x'):=\chi(x_1)\chi(x_2)$ and $\rho(x):=\chi(x_1)\chi(x_2)\chi(x_3)$. We then denote by $\bar \rho_\delta(x')=\delta^{-2}\bar \rho( x'/\delta)$  a sequence of two-dimensional mollifiers, and by $\rho_\delta( x)=\delta^{-3}\rho( x/\delta)$  a sequence of three-dimensional mollifiers. Since $\vec u\ast \rho_\delta \in \C^1(\R^3;\R^3)$ and
$$\partial_3(\vec u\ast \rho_\delta)_{\alpha} = 2e_{\alpha 3}(\vec u\ast \rho_\delta) - \partial_{\alpha}(\vec u \ast \rho_\delta)_3,$$
it follows from the fundamental theorem of calculus that for each $(x',x_3) \in \O_f$,
\begin{multline} \label{eq:eA3conv}
u_\alpha \ast \rho_\delta( x', x_3) = u_\alpha \ast \rho_\delta( x', 0)+2\int_0^{x_3} e_{\alpha 3}(\vec u \ast \rho_\delta)( x', s) \dd s \\ 
-\int_0^{x_3} \partial_\alpha ( u_3 \ast \rho_\delta)( x',s) \dd s.
\end{multline}

Let us study each of the above terms separately. The term in the left hand side of \eqref{eq:eA3conv} clearly satisfies $u_\alpha \ast \rho_\deltaÊ\to u_\alpha$ strongly in $L^2(\O_f)$, and thus (for a suitable subsequence)
\begin{equation}\label{eq:term1}
u_\alpha \ast \rho_\deltaÊ\to u_\alpha \quad \text{ $\LL^3$-a.e. in } \O_f.
\end{equation}

Concerning the first term on the right-hand side of \eqref{eq:eA3conv}, standard properties of convolution of measures ensure that $E\vec u_\delta \weakcs E\vec u$ weakly* in $\MM(\R^3)$ and $|E\vec u_\delta|(\R^3) \to |E\vec u|(\R^3)$. Therefore, since $|E\vec u|(\partial \O_f)=0$, we deduce that $|E\vec u_\delta|(\O_f) \to |E\vec u|(\O_f)$ which implies, by the continuity property of the trace (see \cite[Proposition 3.4]{B3}) that $u_\alpha \ast \rho_\delta \to u_\alpha$ strongly in $L^1(\partial \O_f)$. Thus, denoting by $u^+_\alpha(\cdot, 0)$ the upper trace of $u_\alpha$ on $\omega \times \{0\}$, there is a subsequence such that
\begin{equation}\label{eq:term2}
u_\alpha \ast \rho_\delta(\cdot, 0)\to u^+_\alpha(\cdot, 0) \quad \text{ $\LL^2$-a.e. in } \omega.
\end{equation}
	
Regarding the second term on the right-hand side of \eqref{eq:eA3conv}, we have $e_{\alpha 3}(\vec u \ast \rho_\delta)= 
(E_{\alpha 3} \vec u)\ast \rho_\delta$ with $E_{\alpha 3} \vec u= \frac{[\vec u]_3(\nu_{\vec u})_\alpha}{2} \mathcal H^2\res J_{\vec u}$, and thus
\begin{eqnarray*}
\mathcal E(x',x_3) & := & \int_0^{x_3} e_{\alpha 3}(\vec u \ast \rho_\delta)(x',s)\dd s\\
& = & \frac12 \int_0^{x_3} \int_{J_{\vec u}} \rho_\delta(x'-y',s-y_3)[\vec u]_3(y)(\nu_{\vec u})_\alpha(y)\dd \HH^2(y)\dd s.
\end{eqnarray*}
Since $\vec u\in L^\infty(\O_f;\R^3)$ with $\|u\|_{L^\infty(\O_f)}\leq M$, then $|[\vec u]|Ê\leq 2 M$ which leads to
\begin{eqnarray*}
|\mathcal E(x',x_3)| & \leq &M \int_0^1 \int_{J_{\vec u}} \rho_\delta(x'-y',s-y_3)\dd \HH^2(y)\dd s\\
& = & M \int_{J_{\vec u}} \int_0^1 \rho_\delta(x'-y',s-y_3)\dd s \dd \HH^2(y),
\end{eqnarray*}
where we used Fubini's Theorem in the last equality. We next denote by $Q'(x',\delta):=x' + (-\delta,\delta)^2$ the open square of $\R^2$ (parallel to the coordinate axis) centered at $x'$ and of edge length $2\delta$. Observing that $\rho_\delta(x'-y',s-y_3)=0$ if $y' \not\in Q'(x',\delta)$ and that $\rho_\delta(x'-y',s-y_3)=\bar\rho_\delta(x'-y') \delta^{-1}\chi((s-y_3)/\delta)$ with $\int_\R \chi(t)\dd t =1$, we get that
\begin{multline*}
|\mathcal E(x',x_3)| \leq  M \int_{J_{\vec u} \cap [Q'(x',\delta) \times (0,1)]} \bar\rho_\delta(x'-y') \left( \int_\R \delta^{-1}\chi((s-y_3)/\delta)\dd s\right) \dd \HH^2(y)\\
 =    M \int_{J_{\vec u} \cap [Q'(x',\delta) \times (0,1)]} \bar\rho_\delta(x'-y') \dd \HH^2(y).
\end{multline*}
For any Borel set $B \subset \omega$, let us define the measure $\mu(B):=\HH^2(J_{\vec u} \cap (B \times (0,1)))$ which is nothing but the push-forward of $\HH^2\res J_{\vec u}$ by the orthogonal projection $\pi : \R^3 \to \R^2 \times \{0\}$. Note that $\mu$ is concentrated on $\pi(J_{\vec u})$ since $\mu (\omega \setminus \pi(J_{\vec u}))= \HH^2(J_{\vec u} \cap [(\omega \setminus \pi(J_{\vec u})) \times (0,1)])=0$. On the other hand, the generalized coarea formula (see \cite[Theorem 293]{AmFuPa00}) yields
$$\LL^2(\pi(J_{\vec u})) \leq \int_{\pi(J_{\vec u})} \HH^0(J_{\vec u}Ê\cap \pi^{-1}(x'))\, dx' = \int_{J_{\vec u}} |(\nu_{\vec u})_3|\dd \HH^2=0.$$
Therefore, $\mu$ and $\LL^2$ are mutually singular which ensures that the Radon-Nikod\'ym derivative $\frac{d\mu}{d\LL^2}(x')=0$ at $\LL^2$-a.e. $x'\in \omega$. It follows that for $\LL^2$-a.e. $x' \in \omega$,
$$\sup_{x_3 \in (0,1)} |\mathcal E(x',x_3)| \leq M \|\chi\|_{L^\infty(\R)}^2 \frac{\mu(Q'(x',\delta))}{\delta^2}\to 0,$$
and thus, in particular,
\begin{equation}\label{eq:term3}
 \int_0^{x_3} e_{\alpha 3}(\vec u \ast \rho_\delta)(x',s)\dd s \to 0  \text{  for $\LL^3$-a.e. }(x',x_3) \in \O_f.
\end{equation}

For what concerns the last term on the right-hand side of \eqref{eq:eA3conv}, since $u_3$ is independent of $x_3$, we infer that $u_3 \ast \rho_\delta$ is independent of $x_3$ as well since $u_3 \ast \rho_\delta(x)=u_3 \ast \bar\rho_\delta(x')$ for all $x \in \R^3$. Therefore,
$$\int_0^{x_3} \partial_\alpha (u_3 \ast \rho_\delta)( x',s) \dd s= x_3 \partial_\alpha (u_3\ast \bar \rho_\delta)( x'),$$
and \eqref{eq:eA3conv} -- \eqref{eq:term3} thus imply that 
$$ \partial_\alpha ( u_3 \ast \bar \rho_\delta)( x')\to \frac{u_\alpha^+( x', 0) - u_\alpha( x', x_3)}{x_3}:= \psi_\alpha( x')\quad \text{for $\LL^3$-a.e. $(x',x_3) \in \O_f$}.$$	
That $\psi_\alpha$ only depends on $x'$ is due to the fact that the left-hand side only depends on $x'$. Moreover, since $u^+_\alpha(\cdot,0) \in L^1(\omega)$ and $u_\alpha(\cdot,x_3) \in L^2(\omega)$ for a.e. $x_3 \in (0,1)$, we deduce that $\psi_\alpha \in L^1 (\omega)$. From the last formula we get that 
\begin{equation}\label{eq:ualpha}
u_{\alpha}(x', x_3) = u^+_\alpha(x', 0) - x_3 \psi_\alpha(x'),
\end{equation}
which in particular implies that 
$D_3 u_\alpha = -\psi_\alpha \mathcal L^3$, and
$$D_\alpha u_3= - D_3 u_\alpha + 2E_{\alpha 3}\vec u= \psi_\alpha \mathcal L^3 + [\vec u]_3 (\nu_{\vec u})_\alpha \mathcal H^2 \res J_{\vec u}.$$
As a consequence, the distributional derivative in $\O_f$ of $u_3$ is a bounded Radon measure in $\O_f$, and therefore $u_3 \in BV(\O_f)$. Since the singular part of  the above measure is concentrated on $J_{\vec u}$ which is $\sigma$-finite with respect to $\HH^2$, we deduce thanks to \cite[Proposition 3.92]{AmFuPa00} that $u_3 \in SBV(\O_f)$. Finally, since $u_3$ is independent of $x_3$, we actually infer that $u_3 \in SBV(\omega)$. In addition, by uniqueness of the Lebesgue decomposition, it follows that 
$$\psi_\alpha= \partial_\alpha u_3,\quad 
[\vec u]_3 (\nu_{\vec u})_\alpha \mathcal H^2 \res J_{\vec u}=[u_3] (\nu_{u_3})_\alpha \mathcal H^2 \res [J_{u_3} \times (0,1)]$$
so that 
\begin{equation}\label{eq:Ju3}
J_{u_3} \times (0,1) \asubset J_{\vec u}.
\end{equation}
Integrating relation \eqref{eq:ualpha} with respect to $x_3$  yields
$$\bar u_\alpha(x'):= \int_0^{1} u_\alpha( x', x_3) \dd  x_3= u^+_\alpha( x', 0)  -\frac{1}{2}\partial_\alpha u_3(x')\quad \text{ for $\LL^2$-a.e. $x' \in \omega$},$$
from where \eqref{eq:ualphamed} follows.

\medskip
	
{\bf Step 3.} Let us prove that the approximate gradient of $u_3$, denoted by $\nabla u_3:=(\partial_1 u_3,\partial_2 u_3)$, and the averaged planar displacement $\bar{\vec u}:=(\bar u_1,\bar u_2)$ belong to $BD(\omega)$. For any $\varphi \in \C_c^\infty(\omega; \mathbb M^{2\times 2}_{\rm sym})$, according to the integration by parts formula in $BD$ (see \cite[Theorem 3.2]{B3}), we infer that
$$-\int_\omega \partial_\beta \varphi_{\alpha\beta} \bar u_\alpha \dd  x'= -\int_{\O_f} \partial_\beta\varphi_{\alpha \beta} u_\alpha \dd x
= \int_{\O_f} \varphi_{\alpha \beta} \dd E_{\alpha \beta}\vec u-\int_{\partial\O_f} \varphi_{\alpha \beta} u_\alpha \nu_\beta \dd\mathcal H^{2}.$$
Since $\varphi=0$ in a neighborhood of $\partial \omega \times (0,1)$ and $\nu=\pm e_3$ on $\omega \times \{0,1\}$, we get that the boundary term in the previous expression is zero. Therefore
\begin{equation}\label{eq:DistAVG}
-\int_\omega \partial_\beta \varphi_{\alpha\beta} \bar u_\alpha \dd  x'= \int_{\O_f} \varphi_{\alpha \beta} e_{\alpha\beta}(\vec u) \dd x + \int_{J_{\vec u}} \varphi_{\alpha\beta}  ([\vec u]\odot \nu_{\vec u})_{\alpha\beta}  \dd \mathcal H^2
\end{equation}
which shows that $\bar {\vec u} \in BD(\omega)$. According to slicing properties of $BD$ functions (see \cite[Proposition 3.4]{AmCoDM97}), for $\LL^1$-a.e. $x_3 \in (0,1)$, the function $(u_1(\cdot,x_3),u_2(\cdot,x_3)) \in BD(\omega)$ so that relation \eqref{eq:ualphamed} yields in turn that $\nabla u_3\in BD(\omega)$.

\medskip

{\bf Step 4.} We next establish that  $J_{\vec u} \cong (J_{\bar {\vec u}}Ê\cup J_{u_3}Ê\cup J_{\nabla u_3}) \times (0,1)$. To this aim, let us define the functions $\vec v:=(\bar u_1, \bar u_2, u_3)$ and $\vec g:= (\partial_1 u_3, \partial_2 u_3, 0)$. Since $u_3 \in SBV(\omega)$, $\bar{\vec u}Ê\in BD(\omega)$ and $\nabla u_3 \in BD(\omega)$, then clearly both $\vec v$, $\vec g \in BD(\O_f)$, and
\begin{equation}\label{eq:Jg}
J_{\vec g} = J_{\nabla u_3} \times (0,1).
\end{equation}
Moreover \cite[Proposition 3.92 (b)]{AmFuPa00} and \cite[Proposition 3.5]{AmCoDM97} imply that
$$J_{\bar{\vec u}} \cong \left \{ x' \in \omega: \limsup_{\varrho\to 0} \frac{|E\bar{\vec u}|(B'_\varrho( x'))}{\varrho} >0 \right \}, \; J_{u_3} \cong \left \{ x' \in \omega: \limsup_{\varrho\to 0} \frac{|Du_3|(B'_\varrho( x'))}{\varrho} >0 \right \},$$
and
$$J_{\vec v} \cong \Theta_{\vec  v} := \left \{ x \in \O_f: \limsup_{\varrho\to 0} \frac{|E\vec v|(B_\varrho( x))}{\varrho^2} >0 \right \},$$
where $B'_\varrho(x')$ stands for the two-dimensional open ball of center $x'$ and radius $\varrho$, while $B_\varrho(x)$ stands for the three-dimensional open ball of center $x$ and radius $\varrho$. Since $\vec v$ is independent of $x_3$, then
\begin{equation}\label{eq:Jv}
J_{\vec v} \cong (J_{\bar {\vec u}} \cup J_{u_3}) \times (0,1).
\end{equation}
According to \eqref{eq:Jg} and \eqref{eq:Jv}, it is thus enough to show that $J_{\vec u} \cong J_{\vec v}Ê\cup J_{\vec g}$. 

Let us also define the sets
\begin{align*}
\Theta_{\vec u} &:= \left \{ x \in \O_f: \limsup_{\varrho\to 0} \frac{|E\vec u|(B_\varrho( x))}{\varrho^2} >0 \right \}, \\
\Theta_{\vec g} &:= \left \{ x \in \O_f: \limsup_{\varrho\to 0} \frac{|E \vec g|(B_\varrho( x))}{\varrho^2} >0 \right \},
\end{align*}
and recall that, according again to  \cite[Proposition~3.5]{AmCoDM97}, $\Theta_{\vec u}\cong J_{\vec u}$ and $\Theta_{\vec g}\cong J_{\vec g}$. Using the expression of the displacement \eqref{eq:ualphamed}, we have $\vec u=\vec v + (\frac{1}{2}-x_3) \vec g$. Since $\vec u \in L^\infty(\O;\R^3)$, then $\vec vÊ\in L^\infty(\omega;\R^3)$ as well, and the previous relation yields $\vec g\in L^\infty(\omega;\R^3)$ with
$$\lim_{\varrho \to 0}\frac{1}{\varrho^2}\int_{B_\varrho(x)}|\vec g|\dd y =0 \quad \text{ for all }Êx\in \O_f.$$
Consequently since $E\vec u=E\vec v + (\frac{1}{2}-x_3) E\vec g -e_3 \odot \vec g$, we deduce that $\Omega_f \setminus (\Theta_{\vec v}\cup \Theta_{\vec g}) \subset \Omega_f \setminus \Theta_{\vec u}$ {\it i.e.}  $\Theta_{\vec u} \subset \Theta_{\vec v} \cup \Theta_{\vec g}$ and 
\begin{equation}\label{eq:incl1}
J_{\vec u} \asubset J_{\vec v}\cup J_{\vec g}.
\end{equation}
We now prove the converse inclusion. From the relations $\vec v=\vec u + (x_3-\frac{1}{2})\vec g$ and $(\frac{1}{2}-x_3)\vec g = \vec u - \vec v$, and the fact that $\vec g$ is independent of $x_3$, we similarly obtain that 
$\Theta_{\vec v} \subset \Theta_{\vec u} \cup \Theta_{\vec g}$ and $\Theta_{\vec g} \subset \Theta_{\vec u} \cup \Theta_{\vec v}$
which imply that
\begin{equation}\label{eq:incl2}
J_{\vec v}\setminus J_{\vec g} \asubset J_{\vec u},Ê\quad J_{\vec g}\setminus J_{\vec v} \asubset J_{\vec u}.
\end{equation}
It thus remains to prove that
\begin{equation}\label{eq:lastincl}
J_{\vec v}\cap J_{\vec g}\asubset J_{\vec u}.
\end{equation}
According to \eqref{eq:Ju3}, \eqref{eq:Jg} and \eqref{eq:Jv}, we get
$$(J_{\vec v}Ê\cap J_{\vec g}) \setminus J_{\vec u} \cong  ( [(J_{\bar {\vec u}} \cap J_{\nabla u_3}) \times (0,1)] \setminus J_{\vec u} \asubset  [(J_{\bar {\vec u}} \cap J_{\nabla u_3}) \setminus S_{u_3}] \times (0,1),$$
where we used that, since $u_3 \in SBV(\omega)$, then $J_{u_3}Ê\cong S_{u_3}$.
Assume by contradiction that 
\begin{equation}\label{eq:contrad}
\HH^2((J_{\vec v}Ê\cap J_{\vec g}) \setminus J_{\vec u})>0,
\end{equation}
then there is some $x=(x',x_3) \in (J_{\vec v}Ê\cap J_{\vec g}) \setminus J_{\vec u}$ with $x' \in (J_{\bar{\vec u}}Ê\cap J_{\nabla u_3}) \setminus S_{u_3}$ such that $\nu_{\bar{\vec u}}(x')=\pm \nu_{\nabla u_3}(x')$. Let us assume without loss of generality that $\nu_{\bar{\vec u}}(x')=\nu_{\nabla u_3}(x')=:\nu(x')$, the other case can be dealt with similarly. Since $x'$ is a Lebesgue point of $u_3$, then the one-sided Lebesgue limits of $u_3$ at $x'$ in the direction $\nu(x')$ are equal and coincide with its approximate limit. On the other hand, since $x' \in J_{\bar{\vec u}}Ê\cap J_{\nabla u_3}$, then the functions $\bar{\vec u}$ and $\nabla u_3$ admit one-sided Lebesgue limits at $x'$ in the direction $\nu(x')$. Next, from the expression \eqref{eq:ualphamed} of the displacement, we deduce that for all $\alpha \in \{1,2\}$, the functions $u_\alpha$ admit as well one-sided Lebesgue limits at $x$ in the direction $(\nu(x'),0)$. Gathering all previous informations, we get that the full displacement $\vec u$ admits one-sided Lebesgue limits at $x$ in the direction $(\nu(x'),0)$. Using the fact that $x \not\in J_{\vec u}$, we infer that necessarily $[\vec u](x)=0$, and thus, using again \eqref{eq:ualphamed} yields
\begin{equation}\label{eq:saut}
[\bar u_\alpha](x')+\left(\frac12 - x_3\right)[\partial_\alpha u_3](x')=0 \quad \text{ for all }\alpha \in \{1,2\}.
\end{equation}
We observe that, by \eqref{eq:Jg} and \eqref{eq:Jv}, the sets $J_{\vec v}$ and $J_{\vec g}$ are invariant in the transverse direction, and consequently $(x',y_3) \in J_{\vec v}Ê\cap J_{\vec g}$ for any $y_3 \in (0,1)$. Therefore if $(x',y_3) \not\in J_{\vec u}$ for some $y_3 \neq x_3$, then reproducing the same argument than above   implies that
$$[\bar u_\alpha](x')+\left(\frac12 - y_3\right)[\partial_\alpha u_3](x')=0  \quad \text{ for all }\alpha \in \{1,2\}.$$
Subtracting the previous relation to \eqref{eq:saut} yields $[\bar u_\alpha](x')=[\partial_\alpha u_3](x')=0$ for all $\alpha \in \{1,2\}$, which is against the fact that $x' \in J_{\bar{\vec u}}Ê\cap J_{\nabla u_3}$. As a consequence, $(x',y_3) \in J_{\vec u}$ for all $y_3 \in (0,1)$ with $y_3 \neq x_3$. In addition, since $x' \in  J_{\nabla u_3}$, there is some $\alpha\in \{1,2\}$ such that $[\partial_\alpha u_3](x') \neq 0$, and $x_3$ is therefore given by
$$x_3=\frac12 + \frac{[\bar u_\alpha](x')}{[\partial_\alpha u_3](x')}.$$
Consequently, we have proved that
$$(J_{\vec v}Ê\cap J_{\vec g}) \setminus J_{\vec u} \asubset \bigcup_{\alpha=1}^2 \left\{(x',x_3) : x' \in J_{\bar{\vec u}} \cap J_{\nabla u_3},\; [\partial_\alpha u_3](x')\neq 0,\; x_3=\frac12 + \frac{[\bar u_\alpha](x')}{[\partial_\alpha u_3](x')}\right\}=:A.$$
The set $A$ is Borel measurable, and, for each $x' \in J_{\bar{\vec u}} \cap J_{\nabla u_3}$, its transverse section passing through $x'$, denoted by  $A^{x'}:=\{x_3 \in (0,1) : (x',x_3) \in A\}$ is reduced to at most two points. Since the set $J_{\bar{\vec u}} \cap J_{\nabla u_3}$ is countably $\HH^1$-rectifiable, \cite[Theorem 3.2.23]{Federer} ensures that $\HH^2\res ((J_{\bar{\vec u}} \cap J_{\nabla u_3}) \times (0,1)) = (\HH^1 \res (J_{\bar{\vec u}} \cap J_{\nabla u_3})) \otimes (\LL^1 \res (0,1))$, and Fubini's Theorem yields
$$\HH^2(A)=\int_{J_{\bar{\vec u}} \cap J_{\nabla u_3}}\LL^1(A^{x'})\dd \HH^1(x')=0,$$
which is against \eqref{eq:contrad}, and therefore completes the proof of \eqref{eq:lastincl}. Gathering \eqref{eq:incl1} -- \eqref{eq:lastincl}  leads to $J_{\vec u} \cong J_{\vec v}\cup J_{\vec g}$, and thus $J_{\vec u} \cong (J_{\bar{\vec u}}Ê\cup J_{u_3}Ê\cup J_{\nabla u_3}) \times (0,1)$.

\medskip

{\bf Step 5.} We complete the proof of the proposition by establishing that $\bar{\vec u}$ and $\nabla u_3$ are actually $SBD(\omega)$ functions. Indeed, since we know that $J_{\vec u} \cong \Gamma \times (0,1)$ for some countably $\HH^1$-rectifiable set $\Gamma \subset \omega$, equation \eqref{eq:DistAVG} reads
$$-\int_\omega \partial_\beta \varphi_{\alpha\beta} \bar u_\alpha \dd  x'
= \int_\omega \varphi_{\alpha \beta} \left (\int_0^1 e_{\alpha\beta}(\vec u)\dd x_3\right) \dd x'
+ \int_\Gamma \varphi_{\alpha\beta} \left (\int_0^1 ([\vec u]\odot \nu_\Gamma)_{\alpha\beta}\dd x_3 \right ) \dd \mathcal H^1,$$
which implies that $e_{\alpha \beta}(\bar{\vec u})=\int_0^1 e_{\alpha\beta}(\vec u)(\cdot,x_3)\dd x_3$ by uniqueness of the Lebesgue decomposition, 
and that the singular part of $E\bar{\vec u}$ is concentrated on a countably $\mathcal H^1$-rectifiable set.
It follows from \cite[Proposition 4.7]{AmCoDM97} that $\bar{\vec u}\in SBD(\omega)$ and 
the same can be said, therefore, first for $\nabla u_3$ and then for $(u^+_1(\cdot, 0), u^+_2(\cdot, 0))$.
\end{proof}

Propositions \ref{prop:compactness} and \ref{prop:properties} suggest one to define the limiting space of all kinematically admissible displacements by
\begin{eqnarray}
\mathcal A_{KL} & := & \Bigg\{\vec u  \in SBD(\O_f) : \|\vec u\|_{L^\infty(\Omega_f)} \leq M, \; u_3 \in SBV(\omega) \cap L^\infty(\omega)\nonumber\\
&& \text{with }Ê\nabla u_3 \in SBD(\omega) \cap L^\infty(\omega;\R^2),\nonumber\\
&& u_\alpha( x', x_3)= \bar u_\alpha( x') + \Big(\frac{1}{2}-x_3\Big)\partial_\alpha u_3( x') \text{ for $\LL^3$-a.e.~$x=(x',x_3) \in \O_f$,}\label{eq:expru}\\
&&\text{where $\bar{\vec u}:=(\bar u_1,\bar u_2) \in SBD(\omega) \cap L^\infty(\omega;\R^2)$,}\nonumber\\
&&\text{and  $J_{\vec u} \cong (J_{\bar{\vec u}}Ê\cup J_{u_3}Ê\cup J_{\nabla u_3}) \times (0,1)$} \Bigg\}.\label{eq:exprJu}
\end{eqnarray}

\subsection{$\Gamma$-limit in the film}

\noindent For each $\eps>0$, let us define the functionals $\mathcal E^f_\eps$ and $\mathcal E^f_0 :L^2(\O_f;\R^3) \to [0,+\infty]$ by
$$
\mathcal E_\eps^f(\vec u):=\left\{
\begin{array}{ll}
E_\eps (\vec u, \O_f) & \text{ if } \vec u \in SBD(\O_f) \text{ and }Ê\|\vec u\|_{L^\infty(\O_f)} \leq M,\\
+\infty & \text{ otherwise,}
\end{array}
\right.
$$
and
$$
\mathcal E_0^f(\vec u):=\left\{
\begin{array}{l}
\displaystyle \int_\omega\left [ \frac{\lambda_f \mu_f}{\lambda_f + 2\mu_f} e_{\alpha \alpha}(\bar{\vec u})e_{\beta \beta}(\bar{\vec u}) + \mu_f e_{\alpha\beta}(\bar{\vec u})e_{\alpha\beta}(\bar{\vec u}) \right ] \dd x'\\
\displaystyle \hspace{0.5cm}+ \frac{1}{12} \int_\omega\left [ \frac{\lambda_f \mu_f}{\lambda_f +2 \mu_f} e_{\alpha \alpha}(\nabla u_3)e_{\beta \beta}(\nabla u_3)+ \mu_f e_{\alpha\beta}(\nabla u_3)e_{\alpha\beta}(\nabla u_3) \right ] \dd x'\\
\displaystyle \hspace{0.5cm} + \kappa_f\HH^1(J_{\bar{\vec u}}Ê\cup J_{u_3}Ê\cup J_{\nabla u_3}) \hfill \text{ if } \vec u \in \mathcal A_{KL} ,\\
+\infty \hfill \text{ otherwise.}
\end{array}
\right.
$$

\begin{theorem}\label{BH1}
The sequence of functionals $(\mathcal E_\eps^f)_{\eps>0}$ $\Gamma$-converges to $\mathcal E_0^f$ with respect to the strong $L^2(\O_f;\R^3)$-topology.
\end{theorem}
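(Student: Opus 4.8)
The strategy is the standard two-part $\Gamma$-convergence argument, with the compactness and structural results of Propositions~\ref{prop:compactness} and~\ref{prop:properties} providing the identification of the limit space $\mathcal A_{KL}$. For the \emph{lower bound}, I would take $\vec u_\eps \to \vec u$ strongly in $L^2(\O_f;\R^3)$ with $\liminf_\eps \mathcal E_\eps^f(\vec u_\eps) < \infty$ (otherwise nothing to prove), pass to a subsequence realizing the liminf and with energy bounded by a constant. Propositions~\ref{prop:compactness} and~\ref{prop:properties} then force $\vec u \in \mathcal A_{KL}$. The surface term is handled directly by \eqref{eq:liminfSE}: $\HH^2(J_{\vec u}\cap\O_f) \le \liminf_\eps \int_{J_{\vec u_\eps}\cap\O_f} |((\nu_{\vec u_\eps})', \tfrac1\eps(\nu_{\vec u_\eps})_3)|\dd\HH^2$, and since $J_{\vec u} \cong (J_{\bar{\vec u}}\cup J_{u_3}\cup J_{\nabla u_3})\times(0,1)$ by \eqref{eq:exprJu}, $\HH^2(J_{\vec u}\cap\O_f) = \HH^1(J_{\bar{\vec u}}\cup J_{u_3}\cup J_{\nabla u_3})$, giving the $\kappa_f$ term. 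For the bulk term, I would extract further subsequences so that $\eps^{-2}e_{33}(\vec u_\eps)\weakc\zeta_3$ and $\eps^{-1}e_{\alpha3}(\vec u_\eps)\weakc\zeta_\alpha$ weakly in $L^2(\O_f)$, exactly as in the proof of Theorem~\ref{BH0}; lower semicontinuity of the quadratic form under weak $L^2$ convergence and the componentwise weak convergence $e_{\alpha\beta}(\vec u_\eps)\weakc e_{\alpha\beta}(\vec u)$ then yield
$$\liminf_{\eps\to0} J_\eps(\vec u_\eps,\O_f) \ge \frac12\int_{\O_f}\big[\lambda_f(e_{\alpha\alpha}(\vec u)+\zeta_3)^2 + 2\mu_f e_{\alpha\beta}(\vec u)e_{\alpha\beta}(\vec u) + 4\mu_f\zeta_\alpha\zeta_\alpha + 2\mu_f\zeta_3\zeta_3\big]\dd x,$$
and pointwise minimization over $(\zeta_1,\zeta_2,\zeta_3)$ gives $\zeta_\alpha=0$, $\zeta_3 = -\tfrac{\lambda_f}{\lambda_f+2\mu_f}e_{\alpha\alpha}(\vec u)$ and the reduced integrand $\tfrac{\lambda_f\mu_f}{\lambda_f+2\mu_f}e_{\alpha\alpha}(\vec u)e_{\beta\beta}(\vec u) + \mu_f e_{\alpha\beta}(\vec u)e_{\alpha\beta}(\vec u)$. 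The final step is to substitute the Kirchhoff-Love structure \eqref{eq:expru}: since $e_{\alpha\beta}(\vec u)(x',x_3) = e_{\alpha\beta}(\bar{\vec u})(x') + (\tfrac12 - x_3)e_{\alpha\beta}(\nabla u_3)(x')$, integrating the quadratic expression over $x_3\in(0,1)$ produces the $\bar{\vec u}$ term with coefficient $1$ (from $\int_0^1 1\,dx_3$), the $\nabla u_3$ term with coefficient $\tfrac1{12}$ (from $\int_0^1(\tfrac12-x_3)^2\,dx_3 = \tfrac1{12}$), and \emph{no cross term} (since $\int_0^1(\tfrac12-x_3)\,dx_3 = 0$), which is precisely $\mathcal E_0^f(\vec u)$.

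For the \emph{upper bound}, given $\vec u\in\mathcal A_{KL}$ (otherwise the target is $+\infty$), I would build a recovery sequence of the form
$$\vec u_\eps^*(x',x_3) = \Big(u_1(x',x_3),\, u_2(x',x_3),\, u_3(x') + \eps^2\big(\tfrac12 - x_3\big)h_\eps(x')\Big),$$
or a suitably mollified/truncated variant, where $h_\eps$ is chosen with $h_\eps \to -\tfrac{\lambda_f}{\lambda_f+2\mu_f}e_{\alpha\alpha}(\bar{\vec u})$ and analogous corrections are inserted so that $\eps^{-2}e_{33}(\vec u_\eps^*)$ converges to the optimal $\zeta_3$ identified above while the diverging $\eps^{-2}$ and $\eps^{-4}$ prefactors in \eqref{eq:expru2} are absorbed; the subtlety is that $e_{\alpha\alpha}(\bar{\vec u})$ is only a measure on $J_{\bar{\vec u}}$-type sets, so one must approximate the target Kirchhoff-Love displacement itself by a sequence with smoother transverse corrector. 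Here the issue is that $\vec u_\eps^*$ must be a genuine $SBD(\O_f)$ competitor with $\|\vec u_\eps^*\|_{L^\infty}\le M$ and $J_{\vec u_\eps^*}$ essentially equal to $(J_{\bar{\vec u}}\cup J_{u_3}\cup J_{\nabla u_3})\times(0,1)$, so that the surface energy converges to $\kappa_f\HH^1(J_{\bar{\vec u}}\cup J_{u_3}\cup J_{\nabla u_3})$ — the factor $|((\nu_{\vec u_\eps^*})',\tfrac1\eps(\nu_{\vec u_\eps^*})_3)|$ is harmless since the jump normals are horizontal by construction, $(\nu_{\vec u_\eps^*})_3 = 0$. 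A standard density/diagonalization argument reduces this to the case where $\bar{\vec u}$, $u_3$, $\nabla u_3$ have polyhedral jump sets and smooth bulk parts, for which the explicit corrector computation (mirroring Step~3 of the proof of Theorem~\ref{BH0}) gives the energy convergence; one then recovers the general case by lower semicontinuity of $\mathcal E_0^f$ plus the liminf inequality already proved, using the general principle that the $\Gamma$-limsup is lower semicontinuous.

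I expect the \textbf{main obstacle} to be the construction of the recovery sequence, specifically ensuring simultaneously that (i) the transverse corrector kills the $e_{33}$ and $e_{\alpha3}$ contributions at the right rate despite the Cantor-free but still only-$BD$ regularity of $\bar{\vec u}$ and $\nabla u_3$ (the cross-term cancellation and the $\tfrac1{12}$ factor are automatic once the corrector is affine in $x_3$, so the real work is the approximation), (ii) the $L^\infty$ constraint $\|\vec u_\eps^*\|_\infty\le M$ is respected — which forces a truncation argument that must not create extra jump set, and (iii) the jump set of the recovery sequence does not acquire spurious horizontal pieces or extra area, so that $\HH^2(J_{\vec u_\eps^*}\cap\O_f)\to\HH^1(J_{\bar{\vec u}}\cup J_{u_3}\cup J_{\nabla u_3})$ exactly. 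A clean way to organize this is to first prove the $\Gamma$-limsup inequality on a dense subclass (piecewise smooth displacements with polyhedral jump sets, using reflection/extension arguments near $J_{\vec u}$ to build the corrector locally) and then invoke the relaxation/density result for $\mathcal E_0^f$ in $\mathcal A_{KL}$ together with lower semicontinuity of the $\Gamma$-upper limit to conclude for general $\vec u\in\mathcal A_{KL}$.
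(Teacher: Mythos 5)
Your lower bound follows the paper's argument essentially verbatim: compactness and structure via Propositions \ref{prop:compactness} and \ref{prop:properties}, the surface term via \eqref{eq:liminfSE} together with \eqref{eq:exprJu}, the bulk term via the auxiliary weak limits $\zeta_\alpha,\zeta_3$ and pointwise minimization as in Theorem \ref{BH0}, and then the $x_3$-integration of the Kirchhoff--Love structure giving the coefficients $1$, $\tfrac1{12}$ and the vanishing cross term. That half is fine.

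The problem is the upper bound. The paper needs no density or relaxation argument at all: for an arbitrary $\vec u\in\mathcal A_{KL}$ it takes directly $\vec u^*_\eps=c_\eps\big(\vec u+(0,0,\eps^2x_3h_\eps(x'))\big)$, where $h_\eps\in\C_c^\infty(\omega)$ is merely an $L^2$-approximation of the optimal transverse strain with $\eps\|\nabla h_\eps\|_{L^2}\to0$, $\eps\|h_\eps\|_{L^\infty}\to0$, and $c_\eps:=M/(M+\eps^2\|h_\eps\|_{L^\infty})\to1$. Since $e_{\alpha3}(\vec u)=e_{33}(\vec u)=0$ $\LL^3$-a.e.\ for every $\vec u\in\mathcal A_{KL}$, the singular $\eps^{-2}$ and $\eps^{-4}$ terms of \eqref{eq:expru2} see only the smooth corrector, so no regularity of $\bar{\vec u}$, $u_3$, $\nabla u_3$ beyond membership in $\mathcal A_{KL}$ is ever used; the multiplicative factor $c_\eps$ enforces $\|\vec u^*_\eps\|_{L^\infty}\le M$ without truncation and without altering the jump set; and $J_{\vec u^*_\eps}=J_{\vec u}$ has horizontal normals, so the anisotropic surface term equals $\kappa_f\HH^1(J_{\bar{\vec u}}\cup J_{u_3}\cup J_{\nabla u_3})$ for every $\eps$. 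Your plan instead routes the general case through a ``standard density/diagonalization'' reduction to piecewise smooth displacements with polyhedral jump sets, closed by lower semicontinuity of $\mathcal E_0^f$. That step is not standard and is where your proof would stall: you would need a Chambolle-type approximation theorem inside $\mathcal A_{KL}$, i.e.\ simultaneously for $\bar{\vec u}\in SBD(\omega)$ and for $u_3\in SBV(\omega)$ with $\nabla u_3\in SBD(\omega)$ (a second-order, $SBD^2$-type constraint), with $L^2$-convergence of $e(\bar{\vec u})$ and $e(\nabla u_3)$, convergence of $\HH^1(J_{\bar{\vec u}}\cup J_{u_3}\cup J_{\nabla u_3})$ (a union of three jump sets, not a single one), and compatibility with the constraint $\|\cdot\|_{L^\infty}\le M$; no such result is available off the shelf, and the lower semicontinuity of $\mathcal E_0^f$ that your scheme also invokes is itself not free. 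Drop the detour and use the direct construction. A further detail: your displayed corrector, affine in $x_3$ with target $-\tfrac{\lambda_f}{\lambda_f+2\mu_f}e_{\alpha\alpha}(\bar{\vec u})$, cancels only the membrane part of the optimal $\zeta_3$ and would yield the coefficient $\tfrac{\lambda_f}{2}$ instead of $\tfrac{\lambda_f\mu_f}{\lambda_f+2\mu_f}$ in the $\tfrac1{12}$ bending term; to attain the relaxed coefficient you must let the profile depend on $x_3$ (equivalently, take a corrector quadratic in $x_3$), so that $\eps^{-2}e_{33}(\vec u^*_\eps)$ converges to the full $x_3$-dependent $-\tfrac{\lambda_f}{\lambda_f+2\mu_f}e_{\alpha\alpha}(\vec u)$, as your own verbal requirement that it converge to ``the optimal $\zeta_3$'' in fact dictates.
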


\begin{proof}
{\bf Step 1.} We start by deriving a lower bound inequality, {\it i.e.}, for any $\vec u \in L^2(\O_f;\R^3)$ and any sequence $(\vec u_\eps)_{\eps>0}Ê\subset L^2(\O_f;\R^3)$ such that $\vec u_\eps \to \vec u$ strongly in $L^2(\O_f;\R^3)$, then
$$\liminf_{\eps \to 0} \mathcal E_{\varepsilon}^f (\vec u_\eps) \geq \mathcal E_0^f(\vec u).$$

If $\liminf_\eps \mathcal E^f_{\varepsilon} (\vec u_\eps)=+\infty$, the result is obvious. Otherwise, up to a subsequence, we can assume that 
$$\lim_{\eps \to 0} \mathcal E^f_{\eps} (\vec u_\eps)=\liminf_{\eps \to 0} \mathcal E^f_{\varepsilon} (\vec u_\eps)<\infty.$$
By virtue of the above energy bound, we can assume without loss of generality that the conclusions of Propositions \ref{prop:compactness} and \ref{prop:properties} hold so that $\vec u \in \mathcal A_{KL}$. Using a very similar argument than that used in the proof of the lower bound in Theorem \ref{BH0}, combined with the lower semicontinuity of the surface energy established in \eqref{eq:liminfSE}, we obtain that
$$\liminf_{\eps \to 0} E_{\eps} (\vec u_\eps, \O_f) \geq \int_{\O_f}\left [ \frac{\lambda_f \mu_f}{\lambda_f + 2\mu_f} e_{\alpha \alpha}(\vec u)e_{\beta \beta}(\vec u)
+ \mu_f e_{\alpha\beta}(\vec u)e_{\alpha\beta}(\vec u) \right] \dd x +\kappa_f \HH^2(J_{\vec u}\cap \O_f).$$

According to \eqref{eq:expru}, we get that
\begin{multline*}
 \int_{\O_f} e_{\alpha \beta}(\vec u) e_{\alpha \beta}(\vec u) \dd x
=  \int_{\O_f} \Big[e_{\alpha \beta}(\bar{\vec u})e_{\alpha \beta}(\bar{\vec u})+ 2\left(\frac{1}{2}-x_3\right) e_{\alpha \beta}(\bar{\vec u}) e_{\alpha \beta}(\nabla u_3)\\ 
+ \left(\frac{1}{2}-x_3\right)^2e_{\alpha \beta}(\nabla u_3)e_{\alpha \beta}(\nabla u_3)\Big] \dd x\\
=\int_\omega e_{\alpha \beta}(\bar{\vec u})e_{\alpha \beta}(\bar{\vec u})\, \dd x'
+  \frac{1}{12}\int_{\omega} e_{\alpha \beta}(\nabla u_3)e_{\alpha \beta}(\nabla u_3) \dd x',
\end{multline*}
and similarly for the other term
$$\int_{\O_f} e_{\alpha \alpha}(\vec u) e_{\beta \beta}(\vec u) \dd x
=\int_\omega e_{\alpha \alpha}(\bar{\vec u})e_{\beta \beta}(\bar{\vec u})\, \dd x'
+  \frac{1}{12}\int_{\omega} e_{\alpha \alpha}(\nabla u_3)e_{\beta \beta}(\nabla u_3) \dd x'.$$
Therefore \eqref{eq:exprJu} yields the announced energy lower bound.

\medskip

{\bf Step 2.} We next derive an upper bound through the construction of a recovery sequence, {\it i.e.}, for every $\vec u \in L^2(\O_f;\R^3)$, there exists a recovery sequence $({\vec u}^*_\eps)_{\eps>0} \subset L^2(\O_f;\R^3)$ such that ${\vec u}^*_\eps \to \vec u$ strongly in $L^2(\O_f;\R^3)$, and
$$\limsup_{\eps \to 0} \mathcal E^f_\eps({\vec u}^*_\eps) \leq \mathcal E_0^f(\vec u).$$

If $\vec u \not\in \mathcal A_{KL}$, then $\mathcal E_0^f(\vec u)=+\infty$ and the result is obvious. It therefore suffices to assume that $\vec u \in \mathcal A_{KL}$. We now define a recovery sequence $({\vec u}^*_\eps)_{\eps>0}$. For $\LL^3$-a.e. $x=(x',x_3) \in \O_f$ and all $\eps>0$, let
$${\vec u}^*_\varepsilon( x', x_3 ) =c_\eps \big (\vec u(x) + (0,0, \varepsilon^2 x_3 h_\eps( x') )\big ),$$
where $(h_\eps)_{\eps>0}$ is a sequence in $\C_c^\infty(\omega)$ such that
\begin{equation}\label{eq:heps}
h_\eps \to - \frac{\lambda_f}{\lambda_f+2\mu_f} \vec e_{\alpha \alpha}(\vec u)  \text{ in } L^2(\omega), \quad \lim_{\eps \to 0} \varepsilon \|\nabla h_{\eps}\|_{L^2(\omega)}= \lim_{\eps \to 0} \varepsilon \| h_{\eps}\|_{L^\infty(\omega)}= 0,
\end{equation}
and $c_\eps:= M/(M+\eps^2\|h_\eps\|_{L^\infty(\omega)})$. Clearly, ${\vec u}^*_{\varepsilon}\in SBD(\O_f)$ and $\|{\vec u}^*_\varepsilon\|_{L^\infty(\O_f)} \leq M$. Using \eqref{eq:expru2} we get that 
	\begin{align*}
		J_\eps ({\vec u}^*_\eps, \Omega_f)
		&= \frac{c_\eps^2}{2}\int_{\Omega_f} \big [\lambda_f \vec e_{\alpha\alpha}(\vec u) \vec e_{\beta \beta}(\vec u)
			+ 2\mu_f \vec e_{\alpha \beta}(\vec u) \vec e_{\alpha \beta}(\vec u)\big ] \dd x
		\\ & \quad
			+ \frac{c_\eps^2}{2\varepsilon^2} \int_{\Omega_f} \left [2\lambda_f \vec e_{\alpha \alpha}(\vec u) \varepsilon^2  h_\eps + \mu_f \eps^4x_3^2|\nabla h_\eps|^2 \right ] \dd x 
		\\ & \quad 
			+ \frac{c_\eps^2}{2\varepsilon^4} \int_{\Omega_f} (\lambda_f + 2 \mu_f)\varepsilon^4 
			 |h_\eps|^2 \dd x.
	\end{align*}
Thus, since $c_\eps \to 1$ and according to the convergence properties \eqref{eq:heps}, we get that
$$\lim_{\eps \to 0} J_{\varepsilon}({\vec u}^*_\eps, \Omega_f) = \frac{1}{2} \int_{\Omega_f} \left [ \frac{2\lambda_f\mu_f}{\lambda_f+2\mu_f} \vec e_{\alpha\alpha}(\vec u) \vec e_{\beta \beta}(\vec u)+ 2\mu_f \vec e_{\alpha \beta}(\vec u) \vec e_{\alpha \beta}(\vec u)\right ] \dd x .$$

Concerning the surface energy, since $J_{{\vec u}^*_\eps}Ê= J_{\vec u} \cong  (J_{\bar{\vec u}}\cup J_{u_3} \cup J_{\nabla u_3}) \times (0,1)$ it follows that
$$\int_{\Omega_f \cap J_{{\vec u}^*_\eps}}	\left |\left ( (\vecg\nu_{{\vec u}^*_\eps})', \frac{1}{\varepsilon}(\vecg\nu_{{\vec u}^*_\eps})_3 \right ) \right | \dd\mathcal H^2 =  \mathcal H^1(J_{\bar{\vec u}} \cup J_{u_3} \cup J_{\nabla u_3}),$$
which completes the proof of the upper bound.
\end{proof}

\section{Multifissuration: debonding and delamination vs transverse cracks}\label{sec:6}

\noindent In this section, we consider the full model of a film $\O_f$ deposited on a substrate $\O_s$ through a bonding layer $\O_b$, and we assume that both $\O_f$ and $\O_b$ can crack.

\subsection{The anti-plane case}

Following \cite{LBBMM}, it is assumed that the geometry is invariant in the direction $\vec e_2$, {\it i.e.}, $\omega=I \times \R$, where $I$ is a bounded open interval, and that the admissible displacements take the form
$$\vec u(x)=u(x_1,x_3)\vec e_2.$$
In this case the elastic energy reduces to
\begin{multline*}
\tilde J_\eps(u)=\frac{\mu_f}{2} \int_{I \times (0,1)}Ê(|\partial_1 u|^2 + \eps^{-2}|\partial_3 u|^2)\dd x_1 \dd x_3\\
 + \frac{\mu_b}{2} \int_{I \times (-1,0)}(\eps^2|\partial_1 u|^2 + |\partial_3 u|^2)\dd x_1 \dd x_3,
 \end{multline*}
and the total energy is given by
\begin{multline*}
\tilde E_\eps(u):=\tilde J_\eps(u)+ \kappa_f \int_{J_u \cap [I \times (0,1)] } \left |\left ( (  \nu_u)_1, \varepsilon^{-1} (\nu_u)_3 \right ) \right | \dd\HH^1\\
+ \kappa_b \int_{J_u \cap  [I \times [-1,0]] } \left |\left ( \varepsilon ( \nu_u)_1,  (\nu_u)_3 \right ) \right | \dd\HH^1.
\end{multline*}
The natural functional setting is to consider (scalar) displacements in the class
$$\tilde {\mathcal A}:=\{u \in SBV(I \times (-2,1)) : u=0 \;\LL^2\text{-a.e. in }I \times (-2,-1) \text{ and }Ê\|u\|_{L^\infty(I\times (0,1))}Ê\leq M\},$$
where $M>0$ is an arbitrary fixed constant.

In \cite{LBBMM}, the following one-dimensional energy, defined for all $u \in SBV(I)$, was proposed as an approximation of the previous two-dimensional energy
$$\tilde E_0(u):=\frac{\mu_f}{2} \int_I |u'|^2\dd x_1 + \frac{\mu_b}{2} \int_{I \setminus \Delta_u} |u|^2\dd x_1 + \kappa_f \#(J_u) + \kappa_b \LL^1(\Delta_u),$$
where $\Delta_u:=\{ |u| > \sqrt{2 \kappa_b/\mu_b}\}$ is the delamination set. An easy adaptation of the proof of \cite[Theorem A.1]{LBBBHM} justifies rigorously this conjecture through the following $\Gamma$-convergence type result.

\begin{theorem}\label{BH}
Let $u \in SBV(I)$, then
\begin{itemize}
\item for any sequence $(u_\eps)_{\eps>0} \subset\tilde{\mathcal A}$ satisfying $u_\eps \to u$ strongly in $L^2(I \times (0,1))$, then
$$\tilde E_0(u) \leq \liminf_{\eps \to 0}\tilde E_\eps(u_\eps);$$
\item there exists a recovery sequence $(u_\eps^*)_{\eps>0} \subset \tilde{\mathcal A}$ such that $u^*_\eps \to u$ strongly in $L^2(I \times (0,1))$, and
$$\tilde E_0(u) \geq \liminf_{\eps \to 0} \tilde E_\eps(u_\eps^*).$$
\end{itemize}
\end{theorem}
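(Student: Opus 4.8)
The plan is to follow the structure of the proof of \cite[Theorem A.1]{LBBBHM}, adapting it to the present rescaling, and splitting the argument into a compactness-plus-lower-bound part and an upper-bound part. For the \textbf{lower bound}, start from a sequence $(u_\eps) \subset \tilde{\mathcal A}$ with $\liminf_\eps \tilde E_\eps(u_\eps) < \infty$ (otherwise nothing to prove), and extract a subsequence realizing the liminf with uniformly bounded energy. The coefficient $\eps^{-2}$ in front of $|\partial_3 u|^2$ in the film forces $\|\partial_3 u_\eps\|_{L^2(I\times(0,1))} + \int_{J_{u_\eps}\cap[I\times(0,1)]}|(\nu_{u_\eps})_3|\dd\HH^1 \le C\eps$, so in the limit $\partial_3 u = 0$ on $I\times(0,1)$ and the limit jump set there is vertical; hence $u$ is (identified with) a function of $x_1$ alone, in $SBV(I)$, and $(u_\eps)_1(\cdot,0)\to u$ in $L^2(I)$ by a trace argument as in the proof of Theorem \ref{BH0}. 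The film elastic energy then passes to the liminf giving $\frac{\mu_f}{2}\int_I |u'|^2$, and $\kappa_f\#(J_u)$ comes from lower semicontinuity of the vertical surface energy in $SBV$.

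The delamination term is the delicate point of the lower bound. In the bonding layer $I\times(-1,0)$, isolate the order-one term $\frac{\mu_b}{2}\int |\partial_3 u_\eps|^2$ and the associated vertical component $\kappa_b\int_{J_{u_\eps}}|(\nu_{u_\eps})_3|\dd\HH^1$ of the surface energy. Using $u_\eps=0$ on $I\times(-2,-1)$ and slicing in $x_1$, on $\LL^1$-a.e.\ vertical segment $\{x_1\}\times(-1,0)$ the one-dimensional slice of $u_\eps$ jumps from $0$ up to (approximately) $(u_\eps)_1(x_1,0)\approx u(x_1)$, and the one-dimensional energy of this transition is at least $\min\{\frac{\mu_b}{2}|u(x_1)|^2,\; \kappa_b\}$, the first alternative being a continuous affine transition and the second a single jump (plus the $\LL^1$ cost $\kappa_b$ of that jump being counted once integrated over $x_1$, which is exactly $\kappa_b\LL^1(\Delta_u)$ with $\Delta_u=\{|u|>\sqrt{2\kappa_b/\mu_b}\}$). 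Summing/integrating these slice estimates and using Fatou gives $\liminf_\eps \tilde E_\eps(u_\eps)\ge \tilde E_0(u)$; the technical care is in making the slicing of the surface energy in $BV$ rigorous (via the slicing theory for $SBV$, \cite[Section 3.11]{AmFuPa00}) and in handling the $\eps^2|\partial_1 u_\eps|^2$ term in the bonding layer, which is nonnegative and can simply be dropped.

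For the \textbf{upper bound}, given $u\in SBV(I)$ with $\tilde E_0(u)<\infty$, build $u^*_\eps$ explicitly: in the film, set $u^*_\eps(x_1,x_3)=u(x_1)$ on the non-delaminated part (plus, if one wants to match the finer film energy, a small corrector of order $\eps^2$ — but here the anti-plane film energy is just $\frac{\mu_f}{2}\int|u'|^2$, so no corrector is needed); keep $J_{u^*_\eps}=J_u\times(0,1)$, which is vertical and costs exactly $\kappa_f\#(J_u)$ in the limit. In the bonding layer, on $\{x_1\}\times(-1,0)$ use the affine interpolation $u^*_\eps(x_1,x_3)=(x_3+1)u(x_1)$ where $x_1\notin\Delta_u$ (elastic cost $\to \frac{\mu_b}{2}|u(x_1)|^2$), and where $x_1\in\Delta_u$ insert a jump of height $u(x_1)$ at, say, $x_3=0^-$ and keep $u^*_\eps=0$ below (surface cost per unit $x_1$ equal to $\kappa_b$, i.e.\ $\kappa_b\LL^1(\Delta_u)$ in total), with a standard one-dimensional mollification near $\partial\Delta_u$ so that $u^*_\eps\in SBV$; the $\eps^2|\partial_1 u^*_\eps|^2$ term vanishes in the limit. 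One should also truncate at level $M$ (multiplying by $c_\eps\to1$ as in the proof of Theorem \ref{BH1}) to stay in $\tilde{\mathcal A}$. Then $\tilde E_\eps(u^*_\eps)\to\tilde E_0(u)$, which gives the upper bound and completes the proof. \emph{The main obstacle} I anticipate is the delamination slice estimate in the lower bound: correctly attributing, on each one-dimensional slice in the bonding layer, the competition between the quadratic elastic cost of an affine transition and the fixed cost $\kappa_b$ of a jump, and then globalizing via a rigorous $BV$ slicing of the surface energy so that the $\kappa_b\LL^1(\Delta_u)$ term emerges with the sharp constant rather than a weaker bound. This is precisely where \cite[Theorem A.1]{LBBBHM} does the real work, and where ``an easy adaptation'' must be checked carefully.
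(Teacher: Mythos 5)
Your plan is essentially the paper's own route: the paper proves Theorem \ref{BH} only by invoking an adaptation of \cite[Theorem A.1]{LBBBHM}, which is exactly the structure you reproduce — film compactness forcing $\partial_3 u=0$ and vertical cracks, vertical slicing in the bonding layer to obtain the pointwise competition $\min\{\tfrac{\mu_b}{2}|u(x_1)|^2,\kappa_b\}$ (whose integral is $\tfrac{\mu_b}{2}\int_{I\setminus\Delta_u}|u|^2+\kappa_b\LL^1(\Delta_u)$), and the affine-transition/interface-jump recovery sequence. The one point to tighten is in the upper bound: $\Delta_u=\{|u|>\sqrt{2\kappa_b/\mu_b}\}$ may have infinitely many boundary points (so the vertical interfaces $\partial\Delta_u\times(-1,0)$ could cost $\kappa_b\,\eps\cdot\infty$), and the remedy is not a mollification near $\partial\Delta_u$ but an approximation of $\Delta_u$ by sets $\Delta_\eps$ with finitely many boundary points satisfying $\LL^1(\Delta_\eps\,\triangle\,\Delta_u)\to0$ and $\eps\,\HH^0(\partial\Delta_\eps)\to0$, exactly as in Step 1 of the proof of Proposition \ref{pr:UB}.
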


Let us observe that if $u_\eps$ is a sequence of minimizers of $\tilde E_\eps$ (under suitable loadings), the (characteristic function of the) delamination set $\Delta_u$ is constructed as the $L^1$-limit of the orthogonal projection of the jump sets $J_{u_\eps}$ onto the mid-surface $\{x_3=0\}$. In particular, the vertical cracks in the bonding layer do not contribute to delamination.

\subsection{The general case}

We conjecture that Theorem \ref{BH} can be extended to the general  three-dimensional vectorial case. In this situation, the space of kinematically admissible displacements is  given by
$$\A:=\Big\{ \vec u \in SBD(\O) :  \vec u=0 \; \LL^3\text{-a.e. on }Ê\O_s, \text{ and }Ê\|\vec u\|_{L^\infty(\O_f)}Ê\leq M\Big\}.$$

Let us define the energy functionals $\mathcal E_\eps$ and $\mathcal E_0 : L^2(\O;\R^3) \to [0,+\infty]$ by
\begin{equation}\label{eq:Eeps}
\mathcal E_\eps(\vec u):=
\begin{cases}
E_\eps(\vec u) & \text{ if } \vec u \in \A,\\
+\infty & \text{ otherwise,}
\end{cases}
\end{equation}
and$$
\mathcal E_0(\vec u):= \left\{
\begin{array}{l}
\displaystyle \int_\omega \left [ \frac{\lambda_f \mu_f}{\lambda_f +2 \mu_f} e_{\alpha \alpha}(\bar{\vec u})e_{\beta \beta}(\bar{\vec u}) + \mu_f e_{\alpha \beta}(\bar{\vec u}) e_{\alpha \beta}(\bar{\vec u})\right ] \dd x'	\\  
\displaystyle \hspace{0.5cm} + \frac{1}{12} \int_\omega\left [ \frac{\lambda_f \mu_f}{\lambda_f +2 \mu_f} e_{\alpha \alpha}(\nabla u_3)e_{\beta \beta}(\nabla u_3)+ \mu_f e_{\alpha\beta}(\nabla u_3)e_{\alpha\beta}(\nabla u_3) \right ] \dd x' \\
\displaystyle \hspace{0.5cm}+ \frac{\mu_b}{2} \int_{\omega \setminus \Delta} |\bar{\vec u}|^2 \dd x'+ \kappa_f\HH^1(J_{\bar{\vec u}}Ê\cup J_{u_3}Ê\cup J_{\nabla u_3})+ \kappa_b \LL^2(\Delta) \hfill  \text{ if }\vec u \in \mathcal A_{KL},\\
+\infty \hfill \text{ otherwise},
\end{array}
\right.
$$
where the delamination set is defined by 
\begin{equation}\label{eq:delamination}
\Delta:=\left\{x' \in \omega : | \bar{\vec u}(x')| > \sqrt{\frac{2 \kappa_b}{\mu_b}}\right\} \cup \{x' \in \omega : u_3 \neq 0\}.
\end{equation}

We expect $\mathcal E_0$ to be the $\Gamma$-limit
of $\mathcal E_\eps$ as $\eps\to 0$,
but have been unable to prove the  
corresponding lower bound inequality:

\begin{conjecture}
If $\vec u \in L^2(\O;\R^3)$
and $(\vec u_\eps)_{\eps>0} \subset L^2(\O;\R^3)$ is
 any sequence converging strongly to $\vec u$ in $L^2(\O_f;\R^3)$, then
$$\mathcal E_0(\vec u)Ê\leq \liminf_{\eps \to 0}Ê\mathcal E_\eps (\vec u_\eps).$$
\end{conjecture}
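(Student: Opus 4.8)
### Proof Strategy for the Conjecture

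The plan is to follow the structure of the lower bound in Theorem \ref{BH0} and Theorem \ref{BH1}, but with the additional work of extracting the delamination term $\kappa_b \LL^2(\Delta)$ from the surface energy in the bonding layer, and the cohesive term $\frac{\mu_b}{2}\int_{\omega\setminus\Delta}|\bar{\vec u}|^2$ from the elastic energy there. First, I would reduce to the case $\liminf_\eps \mathcal E_\eps(\vec u_\eps) < \infty$ and pass to a subsequence realizing the liminf, so that $\sup_\eps E_\eps(\vec u_\eps) \leq C$. The energy bound on $\O_f$ combined with Propositions \ref{prop:compactness} and \ref{prop:properties} (applied on $\O_f$) gives $\vec u|_{\O_f} \in \mathcal A_{KL}$ together with the weak convergences $e(\vec u_\eps) \weakc e(\vec u)$ in $L^2(\O_f)$ and the liminf inequality \eqref{eq:liminfSE} for the film surface energy. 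As in Step 1 of Theorem \ref{BH1}, isolating the $\eps$-independent part of $J_\eps(\vec u_\eps,\O_f)$, optimizing over the weak limits of $\eps^{-2}e_{33}(\vec u_\eps)$ and $\eps^{-1}e_{\alpha 3}(\vec u_\eps)$, and using \eqref{eq:expru} to split the integrals over $x_3$, yields the first two bulk terms of $\mathcal E_0$ plus $\kappa_f \HH^1(J_{\bar{\vec u}}\cup J_{u_3}\cup J_{\nabla u_3})$.

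The new content is the bonding layer. Here I would define $\Delta$ via \eqref{eq:delamination} and argue on $\omega \setminus \Delta$ and on $\Delta$ separately. On a set where $u_3 = 0$ and $|\bar{\vec u}| \leq \sqrt{2\kappa_b/\mu_b}$, the idea (as in the scalar proof of \cite[Theorem A.1]{LBBBHM} adapted in Theorem \ref{BH}) is that the displacement of the film at the interface, $\vec u_\eps(\cdot,0)$, must be accommodated across the bonding layer either elastically — at cost $\frac{\mu_b}{2}\int |\bar{\vec u}|^2$, recovered via the order-one shearing term $2\mu_b \int_{\O_b} e_{\alpha 3}(\vec u_\eps)^2$ together with a Cauchy--Schwarz/Poincar\'e argument in $x_3$ exactly as in \eqref{1205} of Theorem \ref{BH0} — or via fracture, at cost $\kappa_b \LL^2(\Delta)$. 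The point is that on $\omega\setminus\Delta$ one pays the cohesive price because the jump across any ``essentially horizontal'' crack in $\O_b$ is too small to be worth the toughness $\eps\kappa_b$ once rescaled, whereas on $\Delta$ one pays $\kappa_b$ per unit area by projecting the portions of $J_{\vec u_\eps}\cap\O_b$ with dominant vertical normal onto the mid-surface. To make this precise I would, for fixed $\eps$, decompose $J_{\vec u_\eps}\cap\O_b$ according to whether $|(\nu_{\vec u_\eps})_3|$ is close to $1$ or not; the near-vertical part carries surface energy $\geq \eps \kappa_b \HH^2(\cdot)$ and contributes, after projection and a lower-semicontinuity/localization argument in the limit, to $\kappa_b \LL^2(\Delta)$, while the complementary part, where $|(\nu_{\vec u_\eps})'|$ is bounded away from zero, contributes to the film-type surface term via the $\eps (\nu_{\vec u_\eps})'$ factor in $E_\eps(\vec u_\eps,\O_b)$. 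Combining, taking $\liminf$, and summing the film and bonding-layer contributions (disjointly supported in the limit) gives $\mathcal E_0(\vec u) \leq \liminf_\eps \mathcal E_\eps(\vec u_\eps)$.

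The main obstacle — and the reason the authors state this only as a conjecture — is precisely the identification of $\Delta$ from the limiting objects and the optimality of the affine transition in $x_3$. As Example \ref{ex:micro} shows, the bonding layer can lower its elastic energy by a periodic sequence of small rotations once it is allowed to fracture, so the naive claim that $\eps^{-1}e_{\alpha 3}(\vec u_\eps)$ and $\eps^{-2}e_{33}(\vec u_\eps)$ are bounded in $L^2(\O_b)$ fails, and one cannot simply pass to weak limits as in the Sobolev case of Theorem \ref{BH0}. Consequently the orthogonal projection of $J_{\vec u_\eps}\cap\O_b$ does not detect the true delamination set, and the ``almost vertical'' projections alluded to in the Introduction only yield a surface lower bound with a non-sharp constant (and a bulk lower bound only under stronger a priori estimates, per Lemma \ref{lem:bulkb2}). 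A complete proof would require showing that these microstructured rotations cannot, in the $\Gamma$-limit, do better than the affine transition — presumably via a careful blow-up analysis at points of $J_{\bar{\vec u}}\cup\{u_3\neq 0\}$, or by importing the damage-variable techniques of \cite{MRT} — and this is the step I expect to be genuinely hard and currently open.
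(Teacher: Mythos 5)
This statement is not proved in the paper at all: it is stated explicitly as an open conjecture (``we have been unable to prove the corresponding lower bound inequality''), so there is no paper proof to compare against, and your proposal---as you yourself say in the last paragraph---is a strategy outline rather than a proof. The film part of your plan is sound and is exactly what the paper does in its ``partial results'' discussion: the energy bound plus Propositions \ref{prop:compactness} and \ref{prop:properties} give $\vec u\in\mathcal A_{KL}$, and the lower bound of Theorem \ref{BH1} yields the two elastic terms and $\kappa_f\HH^1(J_{\bar{\vec u}}\cup J_{u_3}\cup J_{\nabla u_3})$. The genuine gap is the bonding layer, and the specific mechanism you sketch there does not work. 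You propose to split $J_{\vec u_\eps}\cap\O_b$ according to whether $|(\nu_{\vec u_\eps})_3|$ is close to $1$, recover $\kappa_b\LL^2(\Delta)$ by orthogonally projecting the nearly horizontal cracks onto $\omega\times\{0\}$, and recover $\frac{\mu_b}{2}\int_{\omega\setminus\Delta}|\bar{\vec u}|^2$ from the shear term by the Cauchy--Schwarz argument of \eqref{1205}. This is precisely the scalar/anti-plane and Sobolev argument, and Example \ref{ex:micro} is designed to defeat it: vertical cracks in $\O_b$ carry the weight $\eps|(\nu_{\vec u_\eps})'|$, hence are asymptotically cheap and invisible to the orthogonal projection, and they permit a fine sequence of small rotations that accommodates an in-plane mismatch $\ell$ at bulk cost $\tfrac{q\mu_b\ell^2}{2}$ with $q<1$, so that neither the cohesive term on $\omega\setminus\Delta$ nor $\kappa_b\LL^2(\Delta)$, with $\Delta$ as in \eqref{eq:delamination}, can be extracted along that route. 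Moreover, the argument of \eqref{1205} needs control of $\int_{-1}^0\partial_\alpha(u_\eps)_3\dd x_3$ (equivalently of the in-plane gradient of the transverse component), which is exactly what is lost once the layer may fracture: the paper only obtains the weak statement of Lemma \ref{lem:conv-impr}, and the bulk lower bound of Lemma \ref{lem:bulkb2} requires both $\lambda_b\geq\mu_b$ and the \emph{additional, unjustified} a priori bound on $(\partial_\alpha(u_\eps)_3)_\eps$ in $L^2(\O_f)$.

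What would actually be needed---and what remains open---is the construction of a delamination volume fraction $\theta\in L^\infty(\omega;[0,1])$ with $(1-\theta)u_3=0$ satisfying the inequality \eqref{eq:good} of Proposition \ref{prop:theta}; once that is available, the pointwise minimization in Proposition \ref{prop:theta} produces the correct $\Delta$-terms. The paper's partial substitutes (almost-vertical oblique projections giving a surface bound only with the non-optimal factor $1/8$, and Lemma \ref{lem:bulkb2} under the extra hypotheses just mentioned) mark exactly where your sketch stops being a proof. So the proposal correctly reproduces the known part and correctly locates the obstruction, but it does not close the conjecture; in particular the sentence claiming that on $\Delta$ one ``pays $\kappa_b$ per unit area by projecting the portions of $J_{\vec u_\eps}\cap\O_b$ with dominant vertical normal'' cannot be made rigorous with the delamination set \eqref{eq:delamination} in the vectorial case, for the reasons above.
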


Our aim here is only to prove the $\Gamma$-$\limsup$ inequality
and 
to present some partial results and techniques
which 
could be relevant in future investigations of this problem.

\begin{proposition}\label{pr:UB}
For every $\vec u \in L^2(\O;\R^3)$, there exists a sequence $({\vec u}^*_\eps)_{\eps>0}Ê\subset L^2(\O;\R^3)$ such that
${\vec u}^*_\eps \to \vec u$ strongly in $L^2(\O_f;\R^3)$, and
$$\mathcal E_0(\vec u)Ê\geq \limsup_{\eps \to 0}Ê\mathcal E_\eps ({\vec u}^*_\eps).$$
\end{proposition}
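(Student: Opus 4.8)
The plan is to construct the recovery sequence by combining the film construction from Theorem~\ref{BH1} with the bonding-layer construction from Theorem~\ref{BH0}, carefully accounting for the delamination set $\Delta$. First I would reduce to the case $\vec u \in \mathcal A_{KL}$ with $\mathcal E_0(\vec u) < +\infty$, since otherwise the inequality is trivial; in particular $\bar{\vec u}\in SBD(\omega)\cap L^\infty$, $u_3\in SBV(\omega)\cap L^\infty$, $\nabla u_3\in SBD(\omega)\cap L^\infty$, and $\HH^1(J_{\bar{\vec u}}\cup J_{u_3}\cup J_{\nabla u_3})<\infty$, so that $\LL^2(\Delta)$ makes sense via \eqref{eq:delamination}.

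The construction would have three regimes. In the film $\O_f$, I would essentially reuse the recovery sequence of Theorem~\ref{BH1}, namely ${\vec u}^*_\eps(x) = c_\eps(\vec u(x) + (0,0,\eps^2 x_3 h_\eps(x')))$ with $h_\eps$ as in \eqref{eq:heps} and $c_\eps\to 1$; this already gives the correct limit for $J_\eps({\vec u}^*_\eps,\O_f)$ and for the film part of the surface energy $\kappa_f\HH^1(J_{\bar{\vec u}}\cup J_{u_3}\cup J_{\nabla u_3})$. In the substrate $\O_s$ we set ${\vec u}^*_\eps = 0$. The delicate part is the bonding layer $\O_b$: outside $\Delta$ (where $\bar{\vec u}$ is small and $u_3\equiv 0$) I would interpolate affinely in $x_3$ between the substrate value $0$ at $x_3=-1$ and the trace of the film displacement at $x_3=0$, i.e. ${\vec u}^*_\eps(x',x_3) = (x_3+1)(\bar{\vec u}(x'),0)$ as in Theorem~\ref{BH0}; this produces the cohesive energy $\frac{\mu_b}{2}\int_{\omega\setminus\Delta}|\bar{\vec u}|^2\dd x'$ in the limit because the in-plane gradient terms carry an $\eps^2$ prefactor and the $e_{33}$ term vanishes (the interpolant has $e_{33}=0$). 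Inside $\Delta$ I would instead introduce a full transverse crack in the bonding layer — i.e. set ${\vec u}^*_\eps$ to match the film on the "upper" copy and $0$ on the lower, with a jump along $\Delta\times\{x_3 = -\tfrac12\}$ (or a small neighborhood thereof), so that the debonding costs exactly $\kappa_b\LL^2(\Delta)$ from $E_\eps({\vec u}^*_\eps,\O_b)$ while contributing nothing to the bulk; here the $\eps$-weights in the surface energy $E_\eps(\cdot,\O_b)$ are favorable because the normal to the debonding surface is $\vec e_3$, so $|(\eps(\nu)',(\nu)_3)| = 1$.

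The main technical obstacle will be gluing these two regimes along $\partial\Delta$ so that ${\vec u}^*_\eps$ remains in $SBD(\O)$ with controlled jump set, and in particular so that no spurious surface energy of order one is created along $\partial\Delta\times(-1,0)$. Since $\bar{\vec u}$ and $u_3$ are only $SBD$/$SBV$ functions, $\Delta$ is only a set of finite perimeter at best (indeed $\{|\bar{\vec u}|>\sqrt{2\kappa_b/\mu_b}\}$ and $\{u_3\neq 0\}$ need careful handling), and the traces of $\bar{\vec u}$ from inside and outside $\Delta$ need not agree; I would handle this by first approximating $\vec u$ by a "nicer" displacement — e.g. using a density result so that $J_{\bar{\vec u}}\cup J_{u_3}\cup J_{\nabla u_3}$ is a finite union of $C^1$ arcs, $\bar{\vec u}, u_3, \nabla u_3$ are smooth off this set, and $\Delta$ has $C^1$ boundary — then build the recovery sequence for the approximation, and finally conclude by a diagonal argument using lower semicontinuity of $\mathcal E_0$ along such approximations (so that the $\limsup$ passes to the limit). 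An alternative, possibly cleaner, route is to fatten the crack surface: replace the sharp debonding surface inside $\Delta$ and the transition across $\partial\Delta$ by a thin vertical "wall" of width $O(\eps)$ within the bonding layer where the displacement is interpolated; this localizes all the extra surface contributions to a set whose $\HH^2$-measure tends to that of $\Delta$, and keeps the construction in $SBD$. Either way, once the construction is in place, the energy computation is a routine matter of passing to the limit term by term exactly as in the proofs of Theorems~\ref{BH0} and~\ref{BH1}, using \eqref{eq:heps} to kill the film cross-terms and the $\eps$-scalings in \eqref{eq:expru3} to isolate the single order-one bonding term $2\mu_b\int_{\O_b}e_{\alpha 3}(\vec u)e_{\alpha 3}(\vec u)\dd x$.
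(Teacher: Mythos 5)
Your three--regime construction (film recovery sequence from Theorem \ref{BH1}, affine interpolation $(x_3+1)(\bar{\vec u},0)$ outside the delamination zone, a horizontal debonding jump with normal $\vec e_3$ inside it, $O(\eps)$-cost vertical walls) is exactly the skeleton of the paper's proof; the placement of the jump at $x_3=-\tfrac12$ instead of at $\Delta\times\{0\}$ is immaterial, provided that above the jump you continue the full Kirchhoff--Love formula \eqref{eq:expru} in $x_3$ (if you extend the film trace \emph{constantly} in $x_3$, you create $e_{\alpha3}=\tfrac12\partial_\alpha u_3\neq0$ on $\Delta$, and the corresponding term in \eqref{eq:expru3} has an order-one coefficient, so a spurious bulk term $\tfrac{\mu_b}{4}\int_\Delta|\nabla u_3|^2$ would appear; the paper avoids the issue by simply setting $\vec u^*_\eps\equiv0$ on $\Delta_\eps\times[-1,0]$ and paying the debonding at $\Delta_\eps\times\{0\}$). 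The genuine gap is in how you deal with the irregularity of $\Delta$. Your plan is to regularize the displacement $\vec u$ and conclude by a diagonal argument ``using lower semicontinuity of $\mathcal E_0$''; but for a $\Gamma$-$\limsup$ diagonalization you need the opposite inequality, $\limsup_k\mathcal E_0(\vec u_k)\leq\mathcal E_0(\vec u)$, i.e.\ an approximation that does not increase the limit energy. Precisely the term you must control, $\kappa_b\LL^2(\Delta)$ with $\Delta\supset\{u_3\neq0\}\cup\{|\bar{\vec u}|>\sqrt{2\kappa_b/\mu_b}\}$, is not stable under the smoothing you invoke: mollifying or density-approximating $u_3$ can enlarge $\{u_3\neq0\}$ (hence $\LL^2(\Delta)$) by a fixed amount, and no off-the-shelf density theorem gives you simultaneously $C^1$ jump sets, smoothness off them, a delamination set with $C^1$ boundary \emph{and} convergence of $\mathcal E_0$. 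As written, this step does not close.

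The paper's way around it is to leave $\vec u$ (in particular $u_3$, $\nabla u_3$, and the target value $\mathcal E_0(\vec u)$) untouched and to regularize the \emph{set} $\Delta$ directly: mollify $\chi_\Delta$, use the coarea formula to select finite-perimeter superlevel sets $\Delta_\eps$ with $\chi_{\Delta_\eps}\to\chi_\Delta$ in $L^1(\omega)$ and $\eps\,\HH^1(\partial^*\Delta_\eps)\to0$, and approximate only $\bar{\vec u}$, in $SBV(\omega;\R^2)$ via the Chambolle--Iurlano density theorem, so that $\bar{\vec u}_\eps\chi_{\omega\setminus\Delta_\eps}\in SBV$ and the glued field is in $SBD(\O)$ while the cohesive term still converges to $\tfrac{\mu_b}{2}\int_{\omega\setminus\Delta}|\bar{\vec u}|^2\dd x'$. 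The price of replacing $\Delta$ by $\Delta_\eps$ is an extra interfacial jump at $\{x_3=0\}$ over $\{(u_3,\nabla u_3)\neq0\}\setminus\Delta_\eps$ (in your ``exact $\Delta$'' construction this set is null, but for $\Delta_\eps$ it is not), and one must check, via locality of the approximate gradient ($\nabla u_3=0$ a.e.\ on $\{u_3=0\}$), that its measure tends to zero. If you substitute this direct approximation of $\Delta$ for your ``regularize $\vec u$ and diagonalize'' step, keeping the rest of your construction, the proof goes through and coincides with the paper's.
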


\begin{proof}
If $\vec u \not\in \A_{KL}$, then $\mathcal E_0(\vec u)=+\infty$ and there is nothing to prove. Therefore, we assume from now on that  $\vec u \in \A_{KL}$. 

{\bf Step 1.} In order to define the recovery sequence, we need several approximation steps. We start by approximating the delamination set defined in \eqref{eq:delamination}
by a sequence of sets of finite perimeter. Let 
	{$(\rho_m)_{m\in \N}$}
 be a standard sequence of mollifiers in $\R^2$, and set 
 	{$\chi_m:=\rho_m \ast \chi_{\Delta}$.}
  We know that 
  {$\chi_m \to \chi_\Delta$}
   strongly in $L^1(\omega)$. Set
$$
	{\delta_m:= \sqrt{\|\chi_m - \chi_{\Delta}\|_{L^1(\omega)}} \to 0.}
$$
By the coarea formula \cite[Theorem 3.40]{AmFuPa00}, for every $m \in \N$ large enough, there exists 
{$\frac{1}{2} \leq  t_m \leq 1-\delta_m$}
such that
{$$\Delta_m := \{ x'\in \omega: \chi_m(x') > t_m\}$$}
has finite perimeter. We claim that 
\begin{equation}\label{eq:chi}
	{	\chi_{\Delta_m}Ê}
\to \chi_{\Delta} \text{ in }L^1(\omega).
\end{equation}
Indeed,
{\begin{align*}
\LL^2(\Delta_m \setminus \Delta ) 
	&\leq \frac{1}{t_m} \int_{\Delta_m\setminus \Delta} \chi_m(x') \dd x'
	\leq \frac{1}{t_m} \int_{\Delta_m\setminus \Delta} 
		|\chi_m-\chi_{\Delta}| \dd x'
		\to 0,
		\end{align*}}
and
\begin{eqnarray*}
\LL^2(\Delta \setminus \Delta_m) & \leq & \LL^2(\{ x'\in \Delta: \chi_{\Delta}( x')=1\ \text{and}\ \chi_m( x') \leq 1-\delta_m\}) \\
& \leq & \frac{1}{\delta_m} \int_{\Delta} |\chi_{\Delta}( x') - \chi_m( x')|\dd x'
	{	\leq \delta_m \to 0,}
\end{eqnarray*}
hence $\| \chi_{\Delta} - \chi_{\Delta_m}\|_{L^1(\omega)} =\LL^2(\Delta_m \setminus \Delta )+\LL^2(\Delta \setminus \Delta_m) \to 0$. 
		{ 	In addition, it is possible to find a sequence $\eps_m 
		\overset{m\to\infty}{\longrightarrow} 0$
		such that $\eps_m \mathcal H^1(\partial ^*\Delta_m) 
				\overset{m\to\infty}{\longrightarrow} 0$.
		With a slight abuse of notation, we refer to the sequences 
		$(\eps_m)$ and $(\Delta_m)$ 
		simply as $(\eps)$ and $(\Delta_\eps)$
		and henceforth assume that}
\begin{equation}\label{eq:heps2}
\lim_{\eps \to 0} \varepsilon \HH^1(\partial^*\Delta_\eps)=0.
\end{equation}

We next approximate the  displacement $\vec u$. Indeed, according to \cite[Theorem 3]{C2} (see also \cite[Theorem 3]{I}), there exists a sequence $(\bar{\vec u}_\eps)_{\eps>0} \in SBV(\omega;\R^2)$ such that $\bar{\vec u}_\eps \to \bar{\vec u}$ strongly in $L^2(\omega;\R^3)$, $e(\bar{\vec u}_\eps) \to e(\bar{\vec u})$ strongly in $L^2(\omega;\mathbb M^{2 \times 2}_{\rm sym})$, $\HH^1(J_{\bar{\vec u}_\eps} \setminus J_{\bar{\vec u}})+\HH^1(J_{\bar{\vec u}} \setminus J_{\bar{\vec u}_\eps})\to 0$, and $\|\bar{\vec u}_\eps\|_{L^\infty(\omega)} \leq \|\bar{\vec u}\|_{L^\infty(\omega)}$. Let us define for a.e. $x' \in \omega$ and all $x_3 \in (0,1)$,
$$(u_\eps)_\alpha(x',x_3):=(\bar u_\eps)_\alpha(x') + \left(\frac12 -x_3 \right)\partial_\alpha u_3(x'), \quad (u_\eps)_3(x',x_3):=u_3(x')$$
so that $\vec u_\eps \in SBD(\O_f)$, and $\|\vec u_\eps\|_{L^\infty(\O_f)} \leq \|\vec u\|_{L^\infty(\O_f)} \leq M$. 

As in the proof of Theorem \ref{BH1}, we consider a sequence $(h_\eps)_{\eps>0} \subset \C_c^\infty(\omega)$ satisfying \eqref{eq:heps}. 

We now define the recovery sequence by setting, for all $\eps>0$ and for $\LL^3$-a.e. $x=(x',x_3) \in \O$,
$$		
{\vec u}^*_\eps( x', x_3 ) =
\begin{cases}
\displaystyle c_\eps \big( \vec u_\eps(x) +\big (0,0,\varepsilon^2 x_3 h_\eps( x') \big )&  \text{if}\ (x',x_3) \in \O_f,\\
c_\eps (x_3+1) (\bar{\vec u}_\eps(x'),0) & \text{if}\  (x',x_3) \in (\omega \setminus \Delta_\eps) \times [-1,0], \\
0 & \text{if}\ (x',x_3) \in ( \Delta_\eps \times [-1,0]) \cup \O_s,
\end{cases}
$$
where  $c_\eps=\frac{M}{M+\eps^2\|h_\eps\|_{L^\infty(\omega)}}$. Since the set $\Delta_\eps$ has finite perimeter in $\omega$ and $\bar{\vec u}_\eps \in SBV(\omega;\R^2)$, then $\bar{\vec u}_\epsÊ\chi_{\omega \setminus \Delta_\eps} \in SBV(\omega;\R^2)$, and thus ${\vec u}^*_\eps \in SBD(\O)$ and ${\vec u}^*_\eps =0$ $\LL^3$-a.e. in $\O_s$. In addition,
 the fact that $\|\vec u_\eps\|_{L^\infty(\O_f)}Ê\leq M$ yields $\|{\vec u}^*_\eps \|_{L^\infty(\O_f)}Ê\leq M$ as well so that ${\vec u}^*_\eps \in \A$. The sequence $({\vec u}^*_\eps)_{\eps>0}$ is thus admissible, and clearly ${\vec u}^*_\eps \to \vec u$ strongly in $L^2(\O_f;\R^3)$. 

\medskip

{\bf Step 2.} Using the convergence properties of $\bar{ \vec u}_\eps$, a similar argument than in the proof of Theorem \ref{BH1} leads to
$$\limsup_{\eps \to 0}E_\eps({\vec u}^*_\eps,\O_f) \leq \mathcal E_0^f(\vec u).$$
It thus remains to compute the energy associated to this sequence in the bonding layer. First, the bulk energy in the bonding layer gives
\begin{multline*} 
J_{\eps}({\vec u}^*_\eps, \Omega_b)= \frac{c_\eps^2\eps^2}{2} \int_{(\omega \setminus \Delta_\eps) \times (-1,0)} (x_3+1)^2\Big [\lambda_b  e_{\alpha \alpha}(\bar{\vec u}_\eps )  e_{\beta \beta} (\bar{\vec u}_\eps )
+2\mu_b  e_{\alpha \beta} (\bar{\vec u}_\eps )  e_{\alpha \beta} (\bar{\vec u}_\eps )	\Big ] \dd x \\
+\frac{c_\eps^2 \mu_b}{2} \int_{\omega \setminus \Delta_\eps} |\bar{\vec u}_\eps|^2\dd x' \to \frac{\mu_b}{2}\int_{\omega \setminus \Delta}Ê|\bar{\vec u}|^2\dd x'.
 \end{multline*}

Concerning the surface energy in the bonding layer, we first observe that  for each $\eps>0$,
$$J_{{\vec u}^*_\eps} \cap \Omega_b\subset	\Big [ J_{\bar{\vec u}_\eps} \times [-1,0] \Big ]\cup \Big [ \Delta_\eps \times \{0\} \Big ]	\cup \Big [ (\{(u_3,\nabla u_3) \neq 0\} \setminus \Delta_\eps) \times \{0\} \Big ]	 \cup \Big [\partial^*\Delta_\eps \times [-1,0] \Big],$$
where $\partial^* \Delta_\eps$ stands for the reduced boundary of $\Delta_\eps$ \cite[Definition 3.54]{AmFuPa00}. Let us observe that $\omega \setminus \Delta \subset \{u_3=0\} \asubset \{(u_3,\nabla u_3)=0\}$ since, by locality of the approximate gradient, $\nabla u_3=0$ $\LL^2$-a.e. in $\{u_3=0\}$ (see \cite[Proposition 3.73 (c)]{AmFuPa00}).
Then
\begin{multline*}
\limsup_{\eps \to 0} \int_{J_{{\vec u}^*_\eps} \cap \Omega_b} \left |\left ( \eps ( \vecg \nu_{{\vec u}^*_\eps})',  (\vecg\nu_{{\vec u}^*_\eps})_3 \right ) \right | \dd\HH^2\\
\leq \limsup_{\eps \to 0} \Big [\eps \HH^1(J_{\bar{\vec u}_\eps}) + \LL^2 (\Delta_\eps) +\LL^2(\{(u_3,\nabla u_3) \neq 0\} \setminus \Delta_\eps)+ \eps \HH^1 (\partial^* \Delta_\eps) \Big ] = \LL^2(\Delta),
\end{multline*}
thanks to \eqref{eq:delamination}, \eqref{eq:chi} and \eqref{eq:heps2}.
\end{proof}

\subsubsection{Partial results for the lower bound}

Let $\vec u \in L^2(\O;\R^3)$, and $(\vec u_\eps)_{\eps>0} \subset L^2(\O;\R^3)$ be a sequence such that $\vec u_\eps \to \vec u$ strongly in $L^2(\O_f;\R^3)$. If $\liminf_\eps \mathcal E_\eps (\vec u_\eps)=+\infty$ there is nothing to prove. Otherwise by \eqref{eq:Eeps}, up to a subsequence, we can assume without loss of generality that $(\vec u_\eps)_{\eps>0} \subset \mathcal A$, and that
\begin{equation}\label{eq:nrjbound}
\sup_{\eps>0}E_\eps (\vec u_\eps)<+\infty.
\end{equation}
As a consequence, all the compactness results in the film $\O_f$ established in section \ref{sec:compactness} hold. In particular, Propositions \ref{prop:compactness} and \ref{prop:properties} show that $\vec u \in \A_{KL}$, 
	and 
	the lower bound established in Theorem \ref{BH1} 
	yields the terms in $\mathcal E_0(\vec u)$ corresponding
	to the energy in $\Omega_f$.
The main problem is to deal with the bonding layer. Following the scalar case treated in \cite{LBBBHM}, it is enough to show that the energy in $\Omega_b$ is bounded from below by some  functional where the  delamination set is replaced by a function $\theta \in L^\infty(\omega;[0,1])$, which can be interpreted as a delamination volume fraction density. On $\{\theta=1\}$, the film is entirely debonded from the substrate, while on $\{\theta=0\}$ it continuously accommodates the prescribed zero displacement on the substrate exactly as in the Sobolev case (Theorem \ref{BH0}). All intermediate states are contained in the set $\{0<\theta<1\}$.

\begin{proposition}\label{prop:theta}
Assume there exists $\theta \in L^\infty(\omega;[0,1])$ such that $(1-\theta)u_3=0$ $\LL^2$-a.e. in $\omega$, and
\begin{align}\label{eq:good}
	\frac{\mu_b}{2}\int_{\omega}(1-\theta)Ê|\bar{\vec u}|^2\dd x' + \kappa_b \int_\omega \theta\dd x' \leq \liminf_{\eps \to0}ÊE_\eps(\vec u_\eps,\O_b).
\end{align}
Then
$$\frac{\mu_b}{2}\int_{\omega \setminus \Delta}Ê|\bar{\vec u}|^2\dd x' + \kappa_b \LL^2(\Delta) \leq \liminf_{\eps \to 0}ÊE_\eps(\vec u_\eps,\O_b),$$
where $\Delta$ be the delamination set defined in \eqref{eq:delamination} 
\end{proposition}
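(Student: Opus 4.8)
The plan is to derive the claimed inequality from the hypothesis \eqref{eq:good} by optimizing pointwise in $\theta$, exploiting the structural constraint $(1-\theta)u_3=0$. First I would observe that, once \eqref{eq:good} is granted, it suffices to prove the purely pointwise (a.e.\ in $\omega$) inequality
\begin{equation*}
\frac{\mu_b}{2}(1-\chi_\Delta)|\bar{\vec u}|^2 + \kappa_b \chi_\Delta \;\leq\; \frac{\mu_b}{2}(1-\theta)|\bar{\vec u}|^2 + \kappa_b\theta,
\end{equation*}
and then integrate over $\omega$ and chain with \eqref{eq:good}. Thus the entire argument reduces to a one-variable minimization: for fixed $s:=|\bar{\vec u}(x')|\ge 0$ and a parameter $u_3(x')$, we must show that among all admissible values $t\in[0,1]$ with $(1-t)u_3(x')=0$, the choice $t=\chi_\Delta(x')$ from \eqref{eq:delamination} minimizes $f(t):=\tfrac{\mu_b}{2}(1-t)s^2 + \kappa_b t = \tfrac{\mu_b}{2}s^2 + t\big(\kappa_b - \tfrac{\mu_b}{2}s^2\big)$.

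The key steps are then elementary. Split into two cases according to whether $u_3(x')=0$ or not. If $u_3(x')\neq 0$, the constraint $(1-t)u_3=0$ forces $t=1$; but then $x'\in\Delta$ by \eqref{eq:delamination}, so $\chi_\Delta(x')=1$ and the pointwise inequality holds as an equality. If $u_3(x')=0$, then $t$ ranges freely over $[0,1]$ and, since $f$ is affine in $t$ with slope $\kappa_b - \tfrac{\mu_b}{2}s^2$, the minimizer is $t=1$ when $s^2 > 2\kappa_b/\mu_b$ and $t=0$ when $s^2 < 2\kappa_b/\mu_b$ (either value being optimal in the borderline case $s^2 = 2\kappa_b/\mu_b$, which forms an $\LL^2$-null threshold issue one can ignore or absorb). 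By definition \eqref{eq:delamination}, on $\{u_3=0\}$ one has $x'\in\Delta$ precisely when $|\bar{\vec u}(x')|>\sqrt{2\kappa_b/\mu_b}$, so again $t=\chi_\Delta(x')$ achieves the minimum of $f$ over the admissible set. In both cases $f(\chi_\Delta(x'))\le f(\theta(x'))$, which is the desired pointwise inequality.

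Finally I would integrate: $\LL^2(\{x':u_3=0\}\cap\Delta) + \LL^2(\{x':u_3\neq0\}) = \LL^2(\Delta)$, and on $\omega\setminus\Delta$ one has $u_3=0$ and $|\bar{\vec u}|\le\sqrt{2\kappa_b/\mu_b}$, so that $\int_\omega f(\chi_\Delta)\dd x' = \tfrac{\mu_b}{2}\int_{\omega\setminus\Delta}|\bar{\vec u}|^2\dd x' + \kappa_b\LL^2(\Delta)$; combining with $\int_\omega f(\chi_\Delta)\le\int_\omega f(\theta)$ and \eqref{eq:good} gives the conclusion. Honestly, there is no serious obstacle here: the statement is a bookkeeping lemma that packages the ``optimize over delamination fractions'' step, and the only mild care needed is the measurability of $\Delta$ (immediate from \eqref{eq:delamination} since $\bar{\vec u}$ and $u_3$ are Borel) and the treatment of the measure-zero threshold set $\{|\bar{\vec u}|^2 = 2\kappa_b/\mu_b\}\cap\{u_3=0\}$, on which the choice of $t$ is irrelevant for the integral. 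The substance of the problem lies entirely in producing such a $\theta$ satisfying \eqref{eq:good}, which this proposition explicitly assumes.
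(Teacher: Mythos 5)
Your proof is correct and takes essentially the same route as the paper, which bounds the right-hand side of \eqref{eq:good} from below by $\int_\omega \min\{\frac{\mu_b}{2}(1-\eta)|\bar{\vec u}|^2+\kappa_b\eta : \eta\in[0,1],\ (1-\eta)u_3=0\}\dd x'$ and then solves that pointwise minimization explicitly---exactly your case analysis on $\{u_3\neq 0\}$ and $\{u_3=0\}$. One small remark: the threshold set $\{|\bar{\vec u}|^2=2\kappa_b/\mu_b\}\cap\{u_3=0\}$ need not be $\LL^2$-null, but this is harmless since, as you note, the affine function is constant in $t$ there, so $\chi_\Delta$ still attains the minimum.
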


\begin{proof}
By assumption, we have that
$$\int_\omega \min_{\{\eta \in [0,1] : (1-\eta)u_3(x')=0\}} \left(\frac{\mu_b}{2}(1-\eta)|\bar{\vec u}(x')|^2 + \kappa_b \eta\right)\dd x' \leq\liminf_{\eps \to 0}ÊE_\eps(\vec u_\eps,\O_b).$$
The result follows by solving  the  pointwise minimization problem  explicitly. 
\end{proof}

The main point is to construct such a function $\theta$. As in the scalar case \cite{LBBBHM}, $\theta$ is supposed to be obtained as the $L^\infty(\omega)$-weak* limit of a sequence $(\chi_{\Delta_\eps})_{\eps>0}$ of suitable measurable sets $\Delta_\eps \subset \omega$. 
However, it is unclear what is the right notion of an $\eps$-delamination set $\Delta_\eps$ in the vectorial case.
In particular, the following example shows that vertical cracks in the bonding layer
cannot be neglected, so it is not enough to define $\Delta_\eps$ as
the orthogonal projection of $J_{\vec u_\eps}$ onto the mid-plane $\omega \times \{0\}$,
as in the anti-plane and in the Sobolev case (Thm.\ \ref{BH}, \cite[Prop.\ B.2]{LBBBHM}, and Thm\  \ref{BH0}).

\begin{example}[Microstructure example] \label{ex:micro}
{\rm Suppose that $\omega=(0,1)^2$ and $\varepsilon= \frac{1}{2N}$ for some $N\in \N$.
In the film, set 
$$\vec u_\eps(x)=\vec u(x)=(0,\ell,0) \quad \text{ for all } x \in \O_f.$$
In $\Omega_b$ set, for each $i=0, \ldots, N-1$
and all $2i\varepsilon \leq x_2 \leq (2i+2)\eps$, $-1\leq x_3\leq 0< x_1< 1$,
\begin{align*}
	u_\eps(x_1,x_2,x_3)=\left (0, \ell(1+x_3), \ell \eps v \left(\frac{x_2-2i\eps}{\eps},1+x_3\right) 
	\right ),
\end{align*}
where $v \in H^1\Big ((0,2)\times(0,1)\Big )$ is any function such that $v(s,0)=v(s,1)=0\ \forall s\in [0,1]$ and
$$q:=\fint_{s=0}^2\int_{t=0}^1 \big(( 1+\partial_s v )^2 + 2\partial_t v^2\big) \dd s\dd t < 1.$$
If $\Delta_\eps$ is defined as $\pi(J_{\vec u_\eps}Ê\cap \O)$, then 
\begin{equation}\label{1706}
\int_{\O_b} \big(2\mu_b e_{\alpha 3}(\vec u_\eps)e_{\alpha 3}(\vec u_\eps) + \eps^{-2}\mu_b e_{33}(\vec u_\eps)e_{33}(\vec u_\eps)\big)\dd x+ \kappa_b \LL^2(\Delta_\eps)
=\frac{q\mu_b \ell^2}{2}.
\end{equation}
On the other hand, if $\Delta=\{Ê|\bar{\vec u}| >\sqrt{2\kappa_b/\mu_b}\}$ is the expected limit delamination set, then
\begin{equation}\label{1705}
 \int_{\omega \setminus \Delta} \frac{\mu_b}{2}u_\alpha u_\alpha \dd x' + \kappa_b \LL^2(\Delta) = 
\left\{
\begin{array}{lll}
\frac{\mu_b \ell^2}{2} & \text{ if } & \ell \leq \sqrt{\frac{2\kappa_b}{\mu_b}},\\
\kappa_b & \text{ if } & \ell > \sqrt{\frac{2\kappa_b}{\mu_b}}.
\end{array}
\right.
\end{equation}
Choosing $\ell\in \displaystyle \left (\sqrt{\frac{2\kappa_b}{\mu_b}}, \sqrt{\frac{2\kappa_b}{q\mu_b}}\right )$  shows that \eqref{1705} would not be a lower bound for \eqref{1706}. 
}
\qed
\end{example}

Regardless of the notion of an $\eps$-delamination set $\Delta_\eps$  one
tries to define, it is convenient to impose that it should contain the set 
$$P_\eps:= \pi(J_{\vec u_\eps}Ê\cap\O_f),$$
where $\pi : \R^3 \to \R^2$, $\pi(x):=x'$,  is the orthogonal projection onto $\R^2 \times \{0\}$.
On the one hand, there is no loss of generality in doing this, since it converges to 
a Lebesgue negligible set.
Indeed, according to the coarea formula (see \cite[Theorem 2.93]{AmFuPa00}) and the surface energy bound \eqref{eq:nrjbound} in the film, we have 
$$\LL^2(P_\eps) \leq \int_{\R^2} \HH^0(J_{\vec u_\eps} \cap \O_f \cap \pi^{-1}(x')) \dd x' =\int_{J_{\vec u_\eps}\cap \O_f} |( \nu_{\vec u_\eps})_3 | \dd\HH^2  \leq C \eps \to 0.$$
On the other hand, excluding $P_\eps$ enables one to slightly improve the convergences in the film, as in the following lemma which proves the convergence of the planar gradient of the anti-plane displacement.

It will be assumed henceforth that 
 $u_\eps \in SBV^2(\O;\R^3)$ and that  $J_{\vec u_\eps}$ is closed in $\O$ and contained in a finite union of closed connected pieces of $\C^1$ hypersurfaces. In doing this no generality is lost, thanks to the density result in $SBD$ of \cite[Thm.\ 1]{C2}. In particular, we have that $\vec u_\eps \in H^1((\omega \setminus P_\eps) \times (0,1);\R^3)$.

\begin{lemma}\label{lem:conv-impr}
Let $(\Delta_\eps)_{\eps>0}$ be a sequence of closed sets be such that $P_\eps \subset \Delta_\eps$ for each $\eps>0$. Assume that there exists a function $\theta \in L^\infty(\omega;[0,1])$ such that $\chi_{\Delta_\eps}Ê\weakcs \theta$ weakly* in $L^\infty(\omega)$, and $(1-\theta)u_3=0$ $\LL^2$-a.e. in $\omega$. Then 
$$\chi_{\omega \setminus \Delta_\eps} \partial_\alpha (u_\eps)_3 \weakcs 0 \quad \text{  weakly* in }L^2(\omega;H^{-1}(0,1)).$$
\end{lemma}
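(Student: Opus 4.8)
The plan is to show that the distributional derivative in $x_3$ of $\chi_{\omega \setminus \Delta_\eps}(u_\eps)_3$, when tested against a smooth function of $x_3$, behaves well, and then to exploit the energy bound in the bonding layer. The starting point is the observation that $e_{33}(\vec u_\eps) = \partial_3 (u_\eps)_3$ away from $J_{\vec u_\eps}$, and the energy bound \eqref{eq:nrjbound} combined with \eqref{eq:expru3} gives $\|e_{33}(\vec u_\eps)\|_{L^2(\O_b)} \leq C\eps$. Since $\Delta_\eps \supset P_\eps$ and $\vec u_\eps \in H^1((\omega \setminus P_\eps)\times(0,1);\R^3)$, on the cylinder $(\omega \setminus \Delta_\eps) \times (-1,0)$ there are no jumps of $(u_\eps)_3$ in the vertical direction, so $\partial_3 (u_\eps)_3 = e_{33}(\vec u_\eps)$ there in the classical (Sobolev) sense. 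First I would write, for $\varphi \in \C_c^\infty(\omega)$ and $\psi \in \C_c^\infty(0,1)$,
\begin{align*}
\left\langle \chi_{\omega\setminus\Delta_\eps}\,\partial_\alpha (u_\eps)_3 , \varphi\,\psi \right\rangle
= -\int_{(\omega\setminus\Delta_\eps)\times(0,1)} (u_\eps)_3\,\partial_\alpha\varphi\,\psi \dd x
\end{align*}
(no boundary term in $x'$ thanks to $\varphi$ having compact support, but there is a contribution from $\partial^*\Delta_\eps$, which I will bound next).

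The key steps are then: (i) control the term coming from the reduced boundary $\partial^*\Delta_\eps$, which appears because we are differentiating $\chi_{\omega\setminus\Delta_\eps}$ times a $BV$/Sobolev function; here I would use that $(u_\eps)_3$ is bounded in $L^2$ uniformly (it converges to $u_3$), that $\eps \HH^1(\partial^*\Delta_\eps)$ is controlled by the surface energy in the bonding layer, and crucially the fact that we test against $H^{-1}(0,1)$ in the vertical variable, i.e. we pair with $\psi \in H^1_0(0,1)$ — this lets us integrate by parts in $x_3$ and transfer the $\partial_\alpha$ onto the relation $(u_\eps)_3(x',x_3) = \int_{-1}^{x_3}\partial_3(u_\eps)_3(x',s)\dd s$ valid on $\omega \setminus \Delta_\eps$ (using $\vec u_\eps = 0$ in $\O_s$, so the trace on $\{x_3=-1\}$ vanishes). (ii) Using this representation, $\chi_{\omega\setminus\Delta_\eps}(u_\eps)_3$ is, up to the boundary term, controlled in $L^2$ by $\|e_{33}(\vec u_\eps)\|_{L^2(\O_b)} \leq C\eps \to 0$, hence $\chi_{\omega\setminus\Delta_\eps}(u_\eps)_3 \to 0$ strongly in $L^2(\O_b)$; the derivative $\partial_\alpha$ of something going to zero, tested against $\varphi\psi$ with the $x_3$-integration-by-parts already performed, then goes to zero. (iii) Finally I would combine with $\chi_{\Delta_\eps} \weakcs \theta$ and $(1-\theta)u_3 = 0$ to identify the limit: on $\{\theta < 1\}$, $u_3 = 0$, and the local estimate shows the whole expression vanishes; on $\{\theta = 1\}$, the characteristic functions $\chi_{\omega\setminus\Delta_\eps}$ are going weakly* to $0$, killing the contribution. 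Care is needed because $\partial_\alpha(u_\eps)_3$ is not controlled in $L^2(\omega)$ by the bonding-layer energy alone (only its vertical average is, via \eqref{1205}-type estimates); that is precisely why the statement is phrased with values in $H^{-1}(0,1)$ rather than pointwise in $x_3$.

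The main obstacle I expect is step (i): the boundary term on $\partial^*\Delta_\eps$ genuinely involves the trace of $(u_\eps)_3$ from the side $\omega\setminus\Delta_\eps$, and a priori $(u_\eps)_3$ need not be small near $\partial^*\Delta_\eps$ (indeed on $\Delta_\eps$ it may be large). One must argue that this term, as a distribution acting on $\varphi\,\psi$ with $\psi \in H^1_0(0,1)$, is of order $\eps\,\HH^1(\partial^*\Delta_\eps)$ times a uniformly bounded quantity — using the trace inequality in the thin direction and the fact that $\int_{-1}^0 |(u_\eps)_3(x',x_3)|^2\dd x_3$ is bounded in $L^1(\omega)$ by the $L^2$-convergence of $(u_\eps)_3$ — so that \eqref{eq:heps2} (or rather its analogue for the $\eps$-delamination set, which the surrounding discussion assumes) forces it to $0$. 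If the sets $\Delta_\eps$ do not a priori satisfy $\eps\HH^1(\partial^*\Delta_\eps)\to 0$, one should first reduce to that case; the hypothesis that $J_{\vec u_\eps}$ is a finite union of $\C^1$ pieces, together with $\Delta_\eps \supset P_\eps$, should provide enough regularity to make this reduction, but verifying it carefully is the delicate part of the argument.
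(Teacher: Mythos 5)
There is a genuine gap, and it comes from misreading where the $H^{-1}(0,1)$ duality helps and which part of the energy is being used. First, your reduction to the bonding layer is not justified under the lemma's hypotheses: $P_\eps=\pi(J_{\vec u_\eps}\cap\O_f)$ only contains the projections of the jumps \emph{in the film}, so over $\omega\setminus\Delta_\eps$ the vertical segments in $\O_b$ may still cross $J_{\vec u_\eps}$; the representation $(u_\eps)_3(x',x_3)=\int_{-1}^{x_3}\partial_3(u_\eps)_3\dd s$ and the ensuing control by $\|e_{33}(\vec u_\eps)\|_{L^2(\O_b)}\leq C\eps$ therefore fail (controlling bonding-layer cracks is precisely why the paper later introduces the oblique projections; this lemma only uses film quantities). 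Second, and more fundamentally, even if you knew $\chi_{\omega\setminus\Delta_\eps}(u_\eps)_3\to 0$ strongly in $L^2$, this says nothing about $\chi_{\omega\setminus\Delta_\eps}\partial_\alpha(u_\eps)_3$ in the weak* topology of $L^2(\omega;H^{-1}(0,1))$: that topology is negative-order only in $x_3$, while $\partial_\alpha$ is a \emph{horizontal} derivative, and test functions are merely $L^2$ in $x'$, so you cannot integrate $\partial_\alpha$ by parts in $x'$; attempting to do so against smooth $\varphi(x')$ produces boundary terms on $\partial^*\Delta_\eps$ which are not controlled here — the condition $\eps\,\HH^1(\partial^*\Delta_\eps)\to 0$ in \eqref{eq:heps2} belongs to the recovery-sequence construction of Proposition \ref{pr:UB} and is not a hypothesis of this lemma (nor is it available for the oblique-projection sets used later). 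Your own closing caveat identifies this obstacle, but there is no reduction that removes it.

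The paper's proof sidesteps both issues by never differentiating $\chi_{\omega\setminus\Delta_\eps}$ and by working entirely in the film. Write $\chi_{\omega\setminus\Delta_\eps}\partial_\alpha(u_\eps)_3=\partial_3 g^\eps_\alpha$ with $g^\eps_\alpha(x',x_3):=\chi_{\omega\setminus\Delta_\eps}(x')\int_0^{x_3}\partial_\alpha(u_\eps)_3(x',s)\dd s$; since vertical lines over $\omega\setminus\Delta_\eps$ avoid $J_{\vec u_\eps}\cap\O_f$, the fundamental theorem of calculus gives $\int_0^{x_3}\partial_\alpha(u_\eps)_3\dd s=2\int_0^{x_3}e_{\alpha3}(\vec u_\eps)\dd s-(u_\eps)_\alpha+(u_\eps)_\alpha^+(\cdot,0)$, so the film bounds $\|e_{\alpha3}(\vec u_\eps)\|_{L^2(\O_f)}\leq C\eps$ and $\|\vec u_\eps\|_{L^\infty(\O_f)}\leq M$ make $(g^\eps_\alpha)$ bounded in $L^2(\O_f)$. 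Passing to the weak limit in this identity, using $\vec u_\eps\to\vec u$ in $L^2(\O_f)$, the Kirchhoff--Love structure \eqref{eq:expru}, the hypothesis $(1-\theta)u_3=0$ and the locality of approximate gradients ($\nabla u_3=0$ a.e.\ on $\{u_3=0\}$), one finds that the weak limit $g_\alpha$ is independent of $x_3$; hence $\chi_{\omega\setminus\Delta_\eps}\partial_\alpha(u_\eps)_3=D_3g^\eps_\alpha\weakcs D_3g_\alpha=0$ in $L^2(\omega;H^{-1}(0,1))$. The $H^{-1}(0,1)$ framework is thus exploited through a vertical antiderivative bounded in $L^2$, not through any smallness of $(u_\eps)_3$ itself, and no perimeter information on $\Delta_\eps$ is needed.
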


\begin{proof}
First note that
for $\LL^2$-a.e. $x' \not\in P_\eps$ and  $\LL^1$-a.e. $x_3 \in (0,1)$
\begin{align*}
	\zeta_\alpha^\eps(x)
	&:=\int_0^{x_3} \partial_\alpha (u_\eps)_3(x',s)\dd s +(u_\eps)_\alpha(x) - (u_\eps)_\alpha^+(x',0)
	\\ &= \int_0^{x_3} [\partial_\alpha (u_\eps)_3(x',s) +\partial_3 (u_\eps)_\alpha(x',s) ]\dd s = 2 \int_0^{x_3} e_{\alpha 3}(\vec u_\eps)(x',s)\dd s.
\end{align*}
Thanks to the bulk energy bound \eqref{eq:nrjbound} in the film (see also \eqref{eq:bound_alpha3}), we have that
 \begin{align} \label{eq:lowerbd1}
 \|\zeta_\alpha^\eps\|_{L^2((\omega \setminus \Delta_\eps) \times (0,1))} 
\leq 2\|e_{\alpha 3}(\vec u_\eps)\|_{L^2(\Omega_f)} \leq C\eps \to 0.
\end{align}
Integrating \eqref{eq:lowerbd1} we obtain that also $\|\bar \zeta_\alpha^\eps\|_{L^2(\omega \setminus \Delta_\eps)} \to 0$, where
$$
\bar \zeta_\alpha^\eps(x'):=\int_0^1 \zeta_\alpha^\eps(x',x_3)\dd x_3
=\int_0^1 \int_0^{x_3} \partial_\alpha (u_\eps)_3(x',s)\dd s\dd x_3 +(\bar u_\eps)_\alpha(x') - (u_\eps)_\alpha^+(x',0)$$
and $(\bar u_\eps)_\alpha(x'):=	\int_0^1( u_\eps)_\alpha(x',x_3)\dd x_3$.
As a consequence,
\begin{eqnarray}\label{eq:1459}
(u_\eps)_\alpha(x) & = & (u_\eps)_\alpha^+(x',0)- \int_0^{x_3} \partial_\alpha (u_\eps)_3(x',s)\dd s +\zeta_\alpha^\eps(x)\nonumber\\
& = & (\bar u_\eps)_\alpha(x') + \int_0^1 \int_{0}^{x_3} \partial_\alpha (u_\eps)_3(x',s)\dd s\dd x_3- \int_{0}^{x_3} \partial_\alpha (u_\eps)_3(x',s)\dd s +\eta_\alpha^\eps(x),\end{eqnarray}
where $\|\eta_\alpha^\eps\|_{L^2((\omega \setminus \Delta_\eps) \times (0,1))} \to 0$. 

On the other hand, for $\LL^3$-a.e. $x \in \O_f$, let us define the sequences
\begin{eqnarray*}
g_\alpha^\eps(x',x_3) & :=& \chi_{\omega \setminus \Delta_\eps}(x') \int_{0}^{x_3} \partial_\alpha(u_\eps)_3(x',s)\dd s,\\
 \bar g_\alpha^\eps(x') & := & \chi_{\omega \setminus \Delta_\eps}(x')\int_0^1 \int_{0}^{x_3} \partial_\alpha(u_\eps)_3(x',s)\dd s\dd x_3.
 \end{eqnarray*}
From \eqref{eq:lowerbd1} and the {\it a priori} bound $\|\vec u_\eps\|_{L^\infty(\O_f)} \leq M$, 
we get $\|g_\alpha^\eps\|_{L^2(\O_f)} \leq C$ for some constant $C>0$ independent of $\eps$. Therefore, up to a subsequence, $g_\alpha^\eps \weakc g_\alpha$ weakly in $L^2(\O_f)$ for some $g_\alpha \in L^2(\O_f)$. In addition,  $\bar g_\alpha^\eps \weakc \bar g_\alpha$ weakly in $L^2(\omega)$, where $\bar g_\alpha(x'):=\int_0^1 g_\alpha(x',x_3)\dd x_3$. 

Multiplying \eqref{eq:1459} by $\chi_{\omega \setminus \Delta_\eps}$ leads to 
$$(u_\eps)_\alpha(x) \chi_{\omega \setminus \Delta_\eps}(x') = (\bar u_\eps)_\alpha(x')\chi_{\omega \setminus \Delta_\eps} (x') + \bar g_\alpha^\eps(x') - g_\alpha^\eps(x) + \tilde\eta_\alpha^\eps(x),$$
where $\|\tilde \eta_\alpha^\eps\|_{L^2(\O_f)} \to 0$.
Passing to the limit as $\eps \to 0$ finally yields
$$(1-\theta(x')) (u_\alpha(x) -\bar u_\alpha(x')) = \bar g_\alpha(x') - g_\alpha(x),$$
and according to the structure \eqref{eq:expru} of planar displacements, we deduce that
$$\left(\frac{1}{2}-x_3\right)(1-\theta(x'))\partial_\alpha u_3(x') = \bar g_\alpha(x') - g_\alpha(x).$$
Since by assumption $u_3=0$ $\LL^2$-a.e. in $\{\theta<1\}$, we get by locality of approximate gradients of $SBV$ functions (see \cite[Proposition 3.73 (c)]{AmFuPa00}), that $\nabla u_3=0$ $\LL^2$-a.e. in $\{\theta<1\}$, hence $g_\alpha(x)=\bar g_\alpha(x')$. As a consequence, $\chi_{\omega \setminus \Delta_\eps} \partial_\alpha (u_\eps)_3=D_3 g_\alpha^\eps  \weakcs D_3 g_\alpha=0$ weakly* in $L^2(\omega;H^{-1}(0,1))$.
\end{proof}

An alternative to the definition of $\Delta_\eps$ as the orthogonal projection of $J_{\vec u_\eps}$ onto $\omega\times \{0\}$ is to consider its projection along certain 
almost-vertical oblique directions.
Define the unit vectors
$$\vecg\xi^\pm=  \frac{1}{\sqrt{2}} (\pm 1,0,1), \quad \vecg \eta^\pm=  \frac{1}{\sqrt{2}} (0,\pm 1,1),$$
and their rescaled versions
$$\vecg\xi^\pm_\eps :=  \frac{1}{\sqrt{2}}\left(\pm 1,0,\eps^{-1} \right), \quad \vecg \eta^\pm_\eps :=  \frac{1}{\sqrt{2}} \left(0, \pm 1, \eps^{-1} \right).$$
Denote by $\pi_{\vecg\xi^\pm_\eps}$ (resp. $\pi_{\vecg\eta^\pm_\eps}$) $: \R^3 \to \R^2$ the projection onto $\{x_3=0\}$ parallel to the vector $\vecg\xi^\pm_\eps$ (resp. $\vecg\eta^\pm_\eps$), {\it i.e.}, for $x:=(x', 0)+t\vecg\xi^\pm_\eps$ (resp.  $x:=(x', 0)+t\vecg \eta^\pm_\eps$), then $\pi_{\vecg\xi^\pm_\eps} (x ):=x'$ (resp.  $\pi_{\vecg\eta^\pm_\eps} (x ):=x'$). 
Finally, consider the set
\begin{eqnarray*}
\Delta_\eps& := & \pi_{\vecg\xi^+_\eps} \big(J_{\vec u_\eps}\cap (\omega_\eps \times (-2,1))\big)\cup\pi_{\vecg\eta^+_\eps} \big(J_{\vec u_\eps}\cap (\omega_\eps \times (-2,1))\big)\\
&&  \cup \; \pi_{\vecg\xi^-_\eps} \big(J_{\vec u_\eps}\cap (\omega_\eps \times (-2,1))\big)\cup\pi_{\vecg\eta^-_\eps} \big(J_{\vec u_\eps}\cap (\omega_\eps \times (-2,1))\big) \cup P_\eps.
\end{eqnarray*}
where $\omega_\eps:=\{x' \in \omega : \dist(x', \partial \omega)> 2\eps\}$. Up to a subsequence, it can be assumed that
$$\chi_{\Delta_\eps}Ê\weakcs \theta \quad \text{weakly* in }ÊL^\infty(\omega)
\quad \text{for some } \theta \in L^\infty(\omega;[0,1]).$$ 

Using the decomposition  
\begin{eqnarray*}
\left |\big ( \eps (\nu_{\vec u_\eps})', ( \nu_{\vec u_\eps} )_3 \big )\right |^2 & = & \frac12 |\eps (\nu_{\vec u_\eps})_1 + (\nu_{\vec u_\eps})_3|^2 + \frac12 |\eps (\nu_{\vec u_\eps})_1 - (\nu_{\vec u_\eps})_3|^2 +\eps^2 |( \nu_{\vec u_\eps} )_2|^2\\
& = & \frac12 |\eps (\nu_{\vec u_\eps})_2 + (\nu_{\vec u_\eps})_3|^2 + \frac12 |\eps (\nu_{\vec u_\eps})_2 - (\nu_{\vec u_\eps})_3|^2 +\eps^2 |( \nu_{\vec u_\eps} )_1 |^2,
\end{eqnarray*}
it is possible to prove that
$$\liminf_{\eps \to 0}\int_{J_{\vec u_\eps}Ê\cap \O_b}\left |\big ( \eps (\nu_{\vec u_\eps})', ( \nu_{\vec u_\eps} )_3 \big )\right |\dd \HH^2 \geq \frac18 \int_\omega \theta\dd x',$$
which shows that $\|\theta\|_{L^1(\omega)}$ 
is controlled (up to a multiplicative constant) by the fracture energy in the bonding layer.
The constant $1/8$, however, is not optimal, 
since in order to obtain \eqref{eq:good} that prefactor
should not be present.
In most situations ({\it e.g.}\ if the  sets $\pi(J_{\vec u_\eps}\cap \Omega_b)$ have uniformly bounded
perimeters)
it should be possible to obtain the optimal lower bound,
but there are pathological cases (such as the microstructure Example \ref{ex:micro})
where $\int_\omega \theta \dd x'$ is larger than the fracture energy on the left-hand side 
(because each vertical crack is counted twice in $\Delta_\eps$, which is defined as the
union of all the oblique projections).

Be it as it may, by including in $\Delta_\eps$ 
the oblique projections of the cracks inside the bonding layer,
one is able to obtain an optimal estimate 
for the elastic energy required by the body to accomodate
the strain mismatch between the deformations in the film
and in the rigid substrate. Before proving this final estimate,  we need two preliminary technical results concerning sections of $BD$-functions along the oblique directions defined above. For $\LL^2$-a.e.\ $x'\in \omega_\eps$ and $\LL^1$-a.e. $t \in (-2 \sqrt 2 \eps,\sqrt 2 \eps)$,  define the functions
$$(\vec u_\eps)^{ x'}_{\vecg\xi^\pm_\eps}(t):= \vec u_\eps \Big ( ( x', 0)+t\vecg\xi^\pm_\eps\Big )\cdot \vecg\xi^\pm_\eps,\quad (\vec u_\eps)^{ x'}_{\vecg\eta^\pm_\eps}(t):= \vec u_\eps \Big ( ( x', 0)+t\vecg\eta^\pm_\eps\Big )\cdot \vecg\eta^\pm_\eps.$$

\begin{lemma}\label{lem:Sobolev}
For $\LL^2$-a.e. $x' \in \omega_\eps \setminus \Delta_\eps$, we have
$$(\vec u_\eps)^{ x'}_{\vecg\xi^\pm_\eps} \in H^1(-\sqrt 2 \eps,\sqrt 2 \eps) \text{ and } (\vec u_\eps)^{ x'}_{\vecg\eta^\pm_\eps}\in H^1(-\sqrt 2 \eps,\sqrt 2 \eps),$$
with  $(\vec u_\eps)^{ x'}_{\vecg\xi^\pm_\eps} (-\sqrt 2 \eps)=(\vec u_\eps)^{ x'}_{\vecg\eta^\pm_\eps}(-\sqrt 2 \eps)=0$, and
$$x_3 \mapsto (u_\eps)_3(x',x_3) \in H^1(0,1).$$
\end{lemma}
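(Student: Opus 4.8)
The plan is to reduce the statement to the one-dimensional slicing theory of $SBV^2$ functions along the four oblique directions and along $\vec e_3$. The starting point is that, since $J_{\vec u_\eps}$ is closed in $\O$ and contained in a finite union of closed $\C^1$ pieces, $\vec u_\eps$ belongs to $H^1_{\rm loc}(\O\setminus J_{\vec u_\eps};\R^3)$, with $\nabla\vec u_\eps\in L^2(\O;\M^{3\times 3})$ and $\|\vec u_\eps\|_{L^\infty(\O_f)}\le M$; moreover $\vec u_\eps=0$ $\LL^3$-a.e.\ in $\O_s$, so in particular $J_{\vec u_\eps}\cap\O_s=\varnothing$. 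First I would straighten the oblique foliations: for $\vecg\xi^\pm_\eps$ introduce the volume-preserving bi-Lipschitz shear $\Psi^\pm_\eps(x_1,x_2,x_3):=(x_1\pm\eps x_3,x_2,x_3)$, whose fibres $\{x'\}\times(-2,1)$ are carried onto the segments $\{(x',0)+t\vecg\xi^\pm_\eps\}$, and for $\vecg\eta^\pm_\eps$ the analogous shear exchanging $x_1$ and $x_2$. Setting $\vec w^\pm_\eps:=\vec u_\eps\circ\Psi^\pm_\eps$, composition with a bi-Lipschitz diffeomorphism of unit Jacobian gives $\vec w^\pm_\eps\in SBV^2\big((\Psi^\pm_\eps)^{-1}(\O);\R^3\big)$ with $J_{\vec w^\pm_\eps}=(\Psi^\pm_\eps)^{-1}(J_{\vec u_\eps})$ closed, $\nabla\vec w^\pm_\eps\in L^2$, and $\partial_3\vec w^\pm_\eps=(\pm\eps\,\partial_1\vec u_\eps+\partial_3\vec u_\eps)\circ\Psi^\pm_\eps\in L^2$.

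Next I would run the Fubini/slicing argument. Fix $x'\in\omega_\eps\setminus\Delta_\eps$; since $x'\in\omega_\eps$, every point of $\Psi^\pm_\eps(\{x'\}\times(-2,1))$ has its first two coordinates in $\omega$, while any point of that segment lying in $\O_s$ carries the value $0$ (so it is not in $J_{\vec u_\eps}$), and any point lying in $\O_b\cup\O_f$ is not in $J_{\vec u_\eps}$ either by the very definition of $\Delta_\eps$; hence the fibre $\{x'\}\times(-2,1)$ avoids $J_{\vec w^\pm_\eps}$ and is contained in the open set $U^\pm_\eps:=(\Psi^\pm_\eps)^{-1}(\O)\setminus J_{\vec w^\pm_\eps}$, on which $\vec w^\pm_\eps\in H^1_{\rm loc}(U^\pm_\eps;\R^3)$. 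By Fubini's theorem, for $\LL^2$-a.e.\ such $x'$ the section $x_3\mapsto\vec w^\pm_\eps(x',x_3)$ lies in $H^1_{\rm loc}((-2,1);\R^3)$; as its derivative is the section of $\partial_3\vec w^\pm_\eps\in L^2$ and $\|\vec w^\pm_\eps\|_{L^\infty}\le M$, it lies in $H^1((-2,1);\R^3)$. Taking the inner product with $\vecg\xi^\pm_\eps$ (resp.\ $\vecg\eta^\pm_\eps$), rescaling $t=\sqrt2\,\eps\,x_3$, and restricting to $x_3\in(-1,1)$ gives $(\vec u_\eps)^{x'}_{\vecg\xi^\pm_\eps},(\vec u_\eps)^{x'}_{\vecg\eta^\pm_\eps}\in H^1(-\sqrt2\,\eps,\sqrt2\,\eps)$. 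The boundary values follow at once: the section is continuous on $(-2,1)$ and, since $\Psi^\pm_\eps(x',x_3)\in\O_s$ for $x_3\in(-2,-1)$ (as $x'\in\omega_\eps$), it vanishes on $(-2,-1)$, hence at $x_3=-1$, i.e.\ $(\vec u_\eps)^{x'}_{\vecg\xi^\pm_\eps}(-\sqrt2\,\eps)=(\vec u_\eps)^{x'}_{\vecg\eta^\pm_\eps}(-\sqrt2\,\eps)=0$. For the last claim, $\omega_\eps\setminus\Delta_\eps\subset\omega\setminus P_\eps$, so the vertical fibre $\{x'\}\times(0,1)$ avoids $J_{\vec u_\eps}\cap\O_f$; since $\vec u_\eps\in H^1((\omega\setminus P_\eps)\times(0,1);\R^3)$, the same Fubini argument along $\vec e_3$ yields $x_3\mapsto\vec u_\eps(x',x_3)\in H^1((0,1);\R^3)$, hence $x_3\mapsto(u_\eps)_3(x',x_3)\in H^1(0,1)$, for $\LL^2$-a.e.\ such $x'$. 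Discarding the finitely many exceptional null sets completes the proof.

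I expect the only real subtlety to be the geometric bookkeeping in the second paragraph, namely that for $\LL^2$-a.e.\ $x'\in\omega_\eps\setminus\Delta_\eps$ the full oblique segment over $(-2,1)$ stays off $J_{\vec u_\eps}$: this combines the definition of $\Delta_\eps$ as a union of oblique projections with the vanishing of $\vec u_\eps$ (hence the absence of its jump set) throughout $\O_s$, which confines any hypothetical crossing to $\O_b\cup\O_f$, where the horizontal shift of the segment is controlled by the shrinking defining $\omega_\eps$. Beyond that the proof is routine — the change of variables, the $SBV^2$ slicing, the passage from $H^1_{\rm loc}$ to $H^1$ via the $L^2$ derivative and the $L^\infty$ bound, and the rescaling — and this lemma is only a technical preparation, the genuine difficulties in the analysis of the bonding layer lying elsewhere.
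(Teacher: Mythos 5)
Your proof is correct and follows the same skeleton as the paper's (reduce to one-dimensional sections along $\vecg\xi^\pm_\eps$, $\vecg\eta^\pm_\eps$ and $\vec e_3$; use the definition of $\Delta_\eps$ to exclude jump points on the section; use the vanishing in $\O_s$ to get the value at $-\sqrt2\,\eps$), but the technical machinery is different. The paper applies the slicing theorem for $BD/SBV$ functions of Ambrosio--Coscia--Dal Maso directly in the oblique directions: for $\HH^2$-a.e.\ line parallel to $\vecg\xi^\pm_\eps$ the scalar slice $t\mapsto \vec u_\eps\cdot\vecg\xi^\pm_\eps$ is in $SBV^2$ with jump set contained in the intersection of the line with $J_{\vec u_\eps}$, and the exceptional $\HH^2$-null set of bad lines is transferred to an $\LL^2$-null subset of $\omega$ through the Lipschitz projections $\pi_{\vecg\xi^\pm_\eps}$. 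You instead straighten the oblique foliation by a unit-Jacobian shear and invoke the Fubini/ACL characterization of Sobolev functions on the open complement of the closed jump set. Your route leans more heavily on the standing density assumption ($\vec u_\eps\in SBV^2$ with closed jump set, so the full gradient is in $L^2$ and $\vec u_\eps$ is locally $H^1$ off $J_{\vec u_\eps}$), and in exchange gives the whole vector-valued section in $H^1$, which is more than needed; the paper's argument only uses the natural $SBD$ quantities (the slices of $\vec u\cdot\vecg\xi$ and $e(\vec u)\vecg\xi\cdot\vecg\xi$) and would survive without the regularization. Two small points: the bound $\|\vec w^\pm_\eps\|_{L^\infty}\leq M$ you invoke is only available in $\O_f$ (the admissible class imposes no $L^\infty$ bound in $\O_b$), but it is also unnecessary — an absolutely continuous function on a bounded interval with derivative in $L^2$ that vanishes on the portion corresponding to $\O_s$ is automatically in $H^1$; and your ``geometric bookkeeping'' (the oblique segment over a point of $\omega_\eps\setminus\Delta_\eps$ may pass over $\omega\setminus\omega_\eps$, where $\Delta_\eps$, being built from $J_{\vec u_\eps}\cap(\omega_\eps\times(-2,1))$ only, gives no information) is treated with exactly the same degree of precision as in the paper, which likewise only excludes jump points lying over $\omega_\eps$.
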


\begin{proof}
Let us denote by 
$$\Pi_{\vec\xi^{\pm}_\eps}:=\{\zeta \in \R^3 : \; \zeta \cdot \vec\xi^{\pm}_\eps=0\}$$
the  plane orthogonal to $\vec\xi^{\pm}_\eps$ passing through the origin, and, for $y \in \Pi_{\vec\xi^{\pm}_\eps}$, we define 
$$\O^y_{\vecg\xi^{\pm}_\eps}:=\{ t \in \R : y+t\vec\xi^{\pm}_\eps \in \O_f\}.$$
According to slicing properties of functions of bounded deformations (see \cite[Theorem 4.5]{AmCoDM97}), we know that for $\HH^2$-a.e. $y \in \Pi_{\vec \xi_\eps^{\pm}}$, the function
\begin{equation*}
t \mapsto \vec u_\eps \big ( y+t\vecg\xi^{\pm}_\eps\big )\cdot \vecg\xi^{\pm}_\eps
\ \text{belongs to}\ SBV^2\big(\O^y_{\vec \xi_\eps^{\pm }}\big),
\end{equation*}
and its jump set is contained in 
$$\{t \in \O^y_{\vecg\xi^{\pm}_\eps} : y+t \vec \xi_\eps^\pm \in J_{\vec u_\eps}\} .$$
Let us denote by $N_{\vec \xi^\pm_\eps} \subset  \Pi_{\vec \xi_\eps^{\pm}}$ the exceptional set of zero $\HH^2$ measure on which the previous properties fail. Since $\pi_{\vec \xi^\pm_\eps}$ are Lipschitz functions, it follows that the sets $Z_{\vec \xi^\pm_\eps}:=\pi_{\vec \xi^\pm_\eps}(N_{\vec \xi^\pm_\eps}) \subset \omega$ are $\LL^2$-negligible as well. Consequently, for all $x' \in \omega_\eps \setminus Z_{\vec \xi^\pm_\eps}$ (and thus for $\LL^2$-a.e. $x'\in \omega_\eps$), we have that
$$(\vec u_\eps)^{ x'}_{\xi_\eps^{\pm}}\in SBV^2(-2 \sqrt 2 \eps,\sqrt 2 \eps),$$
and its jump set is contained in 
$$\{t \in (- 2\sqrt 2 \eps,\sqrt 2 \eps) : (x',0)+t \vec \xi_\eps^\pm \in J_{\vec u_\eps}\}.$$

By definition of the set $\Delta_\eps$, if $x' \in \omega_\eps \setminus  \Delta_\eps$ then $(x',0) + t \vec \xi_\eps^{\pm} \not \in J_{\vec u_\eps}Ê\cap [\omega_\eps \times (-2,1)]$ for all $t \in (-2\sqrt 2 \eps,\sqrt 2 \eps)$, and therefore $(\vec u_\eps)^{x'}_{\xi_\eps^{\pm}} \in H^1(-2\sqrt 2 \eps,\sqrt 2 \eps)$ for $\LL^2$-a.e. $x'\in \omega_\eps \setminus  \Delta_\eps$. In addition since $(\vec u_\eps)^{x'}_{\xi_\eps^{\pm}} =0$ $\LL^1$-a.e. in $(-2\sqrt 2 \eps,-\sqrt 2 \eps)$, it follows that $(\vec u_\eps)^{x'}_{\xi_\eps^{\pm}}(-\sqrt2 \eps)=0$. The statement concerning the vectors $\vec \eta^\pm_\eps$ can be proved in an analogous way.

\medskip

According again to slicing properties of functions of bounded deformations, we have that for $\LL^2$-a.e. $x' \in \omega$, the function
$x_3 \mapsto (u_\eps)_3(x',x_3)$ belongs to $SBV^2(0,1)$, and its jump set is contained in $\{x_3 \in (0,1) : (x',x_3) \in J_{\vec u_\eps}\} $. As a consequence, for $\LL^2$-a.e. $x \in \omega \setminus \Delta_\eps$, the function $x_3 \mapsto (u_\eps)_3(x',x_3)$ belongs to $H^1(0,1)$.
\end{proof}

The following technical result will be useful in the argument leading to a partial bulk energy lower bound.

\begin{lemma}\label{lem:app}
Let $\vartheta \in [0,2\pi)$, $p:= \cos \vartheta$, $q:=\sin \vartheta$, and define the unit vectors
$$\vec \xi^\pm:=\frac{1}{\sqrt 2}(\pm p,\pm q,1), \quad \vec \eta^\pm:=\frac{1}{\sqrt 2}(\mp q,\pm p,1).$$
For any matrix $\vec A=(a_{ij})_{1 \leq i,j \leq 3} \in \M^{3 \times 3}_{sym}$, we have the decomposition
\begin{eqnarray*}
|\vec A|^2 & =&|\vec A\vec \xi^+ \cdot\vec  \xi^+|^2+|\vec A\vec \xi^- \cdot \vec \xi^-|^2+|\vec A\vec \eta^+ \cdot\vec  \eta^+|^2+|\vec A\vec \eta^- \cdot \vec \eta^-|^2-\frac12 ({\rm tr} \vec A)^2\\
&&+ \frac12 |q^2 a_{11} + p^2 a_{22} -2pq a_{12}|^2+ \frac12 |p^2 a_{11} + q^2 a_{22} +2pq a_{12}|^2\\
&&+2|(p^2-q^2)a_{12} +pq(a_{22} - a_{11})|^2 + \frac12(a_{33}^2+(a_{11}+a_{22})^2).
\end{eqnarray*}
\end{lemma}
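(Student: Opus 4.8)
The plan is to verify the identity by a direct computation, organised so as to reduce everything to two elementary rotation-invariance facts rather than a brute-force expansion in the six independent entries of $\vec A$. First I would expand the four quadratic forms on the right-hand side: using $\vec\xi^\pm=\frac1{\sqrt2}(\pm p,\pm q,1)$ and $\vec\eta^\pm=\frac1{\sqrt2}(\mp q,\pm p,1)$, one finds
$$\vec A\vec\xi^\pm\cdot\vec\xi^\pm=\tfrac12(S_1+a_{33})\pm b,\qquad \vec A\vec\eta^\pm\cdot\vec\eta^\pm=\tfrac12(S_2+a_{33})\pm c,$$
where $S_1:=p^2a_{11}+q^2a_{22}+2pqa_{12}$, $S_2:=q^2a_{11}+p^2a_{22}-2pqa_{12}$, $b:=pa_{13}+qa_{23}$ and $c:=-qa_{13}+pa_{23}$. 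Squaring and adding each $\pm$ pair cancels the cross terms, so the sum of the first four terms of the claimed identity equals $\tfrac12(S_1+a_{33})^2+\tfrac12(S_2+a_{33})^2+2(b^2+c^2)$.

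The second ingredient consists of two invariances. On the one hand, $(b,c)$ is the image of $(a_{13},a_{23})$ under the planar rotation of angle $\vartheta$, so that $b^2+c^2=a_{13}^2+a_{23}^2$. On the other hand, writing $R:=(\begin{smallmatrix}p&-q\\ q&p\end{smallmatrix})$ and $B:=(\begin{smallmatrix}a_{11}&a_{12}\\ a_{12}&a_{22}\end{smallmatrix})$, a short computation gives $R^{T}BR=(\begin{smallmatrix}S_1&T\\ T&S_2\end{smallmatrix})$ with $T:=(p^2-q^2)a_{12}+pq(a_{22}-a_{11})$; since $R$ is orthogonal, $S_1^2+S_2^2+2T^2=a_{11}^2+a_{22}^2+2a_{12}^2$. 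I would also record that $S_1+S_2=a_{11}+a_{22}$, hence $\operatorname{tr}\vec A=S_1+S_2+a_{33}$.

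It then remains to substitute into the claimed right-hand side and simplify, using $p^2+q^2=1$ throughout. Expanding $\tfrac12(S_i+a_{33})^2$, the mixed terms $a_{33}(S_1+S_2)$ cancel against $-\tfrac12(\operatorname{tr}\vec A)^2$, the $a_{33}^2$ and $(a_{11}+a_{22})^2$ contributions balance, and the half-squares $\tfrac12 S_1^2$, $\tfrac12 S_2^2$ appearing explicitly in the statement combine with those produced above to give $S_1^2+S_2^2$; what is left is exactly $S_1^2+S_2^2+a_{33}^2+2(a_{13}^2+a_{23}^2)+2T^2$, which by the two invariances equals $a_{11}^2+a_{22}^2+2a_{12}^2+a_{33}^2+2a_{13}^2+2a_{23}^2=|\vec A|^2$, as desired.

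I do not anticipate any genuine obstacle here: the proof is pure bookkeeping. The only points demanding care are the sign conventions in $\vec\xi^\pm$ and $\vec\eta^\pm$ (the $\eta$-directions are obtained from the $\xi$-directions by the in-plane quarter-turn $(p,q)\mapsto(-q,p)$) and keeping track of the exact cancellation of all the terms involving $a_{33}$ and $a_{11}+a_{22}$ once the $-\tfrac12(\operatorname{tr}\vec A)^2$ term is taken into account.
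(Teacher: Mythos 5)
Your proof is correct: the expansions $\vec A\vec\xi^\pm\cdot\vec\xi^\pm=\tfrac12(S_1+a_{33})\pm b$ and $\vec A\vec\eta^\pm\cdot\vec\eta^\pm=\tfrac12(S_2+a_{33})\pm c$ are right, the two invariance facts ($b^2+c^2=a_{13}^2+a_{23}^2$ and $S_1^2+S_2^2+2T^2=a_{11}^2+a_{22}^2+2a_{12}^2$, with $S_1+S_2=a_{11}+a_{22}$) are exactly what is needed, and the final cancellation does leave $S_1^2+S_2^2+a_{33}^2+2(a_{13}^2+a_{23}^2)+2T^2=|\vec A|^2$. The route is, however, different from the paper's. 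There, one sets $\vec\xi_0:=\vec\xi^+\wedge\vec\xi^-=(q,-p,0)$, notes that $\{\vec\xi^+,\vec\xi^-,\vec\xi_0\}$ is orthonormal in $\R^3$, so that $\{\vec\xi^\pm\otimes\vec\xi^\pm,\vec\xi_0\otimes\vec\xi_0,\sqrt2(\vec\xi^\pm\odot\vec\xi_0),\sqrt2(\vec\xi^+\odot\vec\xi^-)\}$ is an orthonormal basis of $\M^{3\times3}_{\rm sym}$, and Pythagoras gives $|\vec A|^2$ as the sum of the six corresponding squares; the statement then follows by computing the four squares not involving $\vec\xi^\pm\otimes\vec\xi^\pm$ and checking they reproduce the $\vec\eta^\pm$ terms plus the explicit correction terms (including the $-\tfrac12(\operatorname{tr}\vec A)^2$). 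The paper's argument thus explains structurally where the identity comes from — every correction term is a square of a Frobenius coordinate of $\vec A$ in an adapted orthonormal frame, which is what makes the lower bound in Lemma \ref{lem:bulkb2} transparent — whereas your argument trades the tensor basis for two planar rotation-invariance observations (which are, in essence, the orthonormality of the rotated in-plane frame) and a shorter bookkeeping check. Both are complete and rigorous; yours is a bit leaner computationally, the paper's is more conceptual and immediately exhibits the discarded terms as nonnegative squares.
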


\begin{proof}
Let us define $\vec \xi_0:=\vec \xi^+ \wedge\vec  \xi^-=(q,-p,0)$ so that $\{\vec \xi^+,\vec \xi^-,\vec \xi_0\}$  is an orthonormal basis of $\R^3$. Then the family
$$\{\vec \xi^+ \otimes\vec  \xi^+, \vec \xi^- \otimes\vec  \xi^-, \vec \xi_0 \otimes \vec \xi_0, \sqrt 2 (\vec \xi^+ \odot \vec \xi_0), \sqrt 2(\vec \xi^- \odot \vec \xi_0), \sqrt 2(\vec \xi^+ \odot \vec \xi^-)\}$$
defines an orthonormal basis of the set $\M^{3 \times 3}_{sym}$, and  Pythagoras Theorem ensures that
\begin{align*}
|\vec A|^2 & = |\vec A : (\vec \xi^+ \otimes \vec \xi^+)|^2 +|\vec A:(\vec \xi^- \otimes\vec  \xi^-)|^2 + |\vec A:(\vec \xi_0 \otimes \vec \xi_0)|^2 \\
&\hspace{2cm}+  2 |\vec A:(\vec \xi^+ \odot \vec \xi_0)|^2+ 2|\vec A:(\vec \xi^- \odot \vec \xi_0)|^2+ 2|\vec A:(\vec \xi^+ \odot \vec \xi^-)|^2\\
&=|\vec A\vec \xi^+ \cdot \vec \xi^+|^2+|\vec A\vec \xi^- \cdot \vec \xi^-|^2+|\vec A\vec \xi_0 \cdot \vec \xi_0|^2  \nonumber \\
&\hspace{2cm}+  2 |\vec A\vec \xi^+ \cdot \vec \xi_0|^2+ 2|\vec A\vec \xi^- \cdot \vec \xi_0|^2+ 2|\vec A\vec \xi^+ \cdot \vec \xi^-|^2. 
\end{align*}
The conclusion follows from a straightfoward computation of each term.
\end{proof}

We now prove a partial bulk energy lower bound.

\begin{lemma}\label{lem:bulkb2}
Assume that $\lambda_b \geq \mu_b$. Then $(1- \theta)u_3=0$ $\LL^2$-a.e. in $\omega$, and
\begin{equation}\label{eq:bad2}
\liminf_{\eps \to 0}J_\eps(\vec u_\eps,\O_b) \geq \frac{\mu_b}{2} \liminf_{\eps \to 0} \int_{(\omega \setminus \Delta_\eps) \times (0,1)}Ê \left| u_\alpha(x) + \int_0^{x_3} \partial_\alpha  (u_\eps)_3(x',s)\dd s  \right|^2\dd x.
\end{equation}
If in addition the sequences $(\partial_\alpha (u_\eps)_3)_{\eps>0}$ are bounded in $L^2(\O_f)$, then
$$\liminf_{\eps \to 0}J_\eps(\vec u_\eps,\O_b) \geq \frac{\mu_b}{2}  \int_{\omega}(1-\theta)|\bar {\vec u}|^2\dd x'.$$
\end{lemma}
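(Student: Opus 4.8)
The plan is to split the argument of Lemma~\ref{lem:bulkb2} into two parts. First I would establish the constraint $(1-\theta)u_3=0$ and the intermediate bound \eqref{eq:bad2}; then, under the additional $L^2$-boundedness of $(\partial_\alpha (u_\eps)_3)$, I would upgrade \eqref{eq:bad2} to the announced inequality.

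\textbf{Step 1: the constraint and the intermediate bound.}
For the constraint $(1-\theta)u_3=0$, I would argue as in the Sobolev case: by Lemma~\ref{lem:Sobolev}, for $\LL^2$-a.e.\ $x'\in \omega_\eps\setminus \Delta_\eps$ the slice $x_3\mapsto (u_\eps)_3(x',x_3)$ lies in $H^1(0,1)$ and, tracking the $\vecg\xi^\pm_\eps$ and $\vecg\eta^\pm_\eps$ slices down into $\O_b$ and the rigid substrate, it vanishes near $x_3=0$ from below; combined with the $\eps^{-2}$-weighted bound on $\|e_{33}(\vec u_\eps)\|_{L^2(\O_b)}$ coming from \eqref{eq:nrjbound}, this forces $(u_\eps)_3\chi_{\omega\setminus\Delta_\eps}\to 0$ strongly in $L^2$, and passing to the limit against test functions and using $\chi_{\Delta_\eps}\weakcs\theta$ gives $(1-\theta)u_3=0$. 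For \eqref{eq:bad2}, I would keep from \eqref{eq:expru3} only the single $O(1)$ term, namely $J_\eps(\vec u_\eps,\O_b)\geq 2\mu_b\int_{\O_b} e_{\alpha 3}(\vec u_\eps)e_{\alpha 3}(\vec u_\eps)\dd x$, then restrict the integral to $(\omega\setminus\Delta_\eps)\times(-1,0)$, where by Lemma~\ref{lem:Sobolev} the relevant slices are Sobolev. On such slices $2e_{\alpha 3}(\vec u_\eps)=\partial_3 (u_\eps)_\alpha+\partial_\alpha (u_\eps)_3$; integrating in $x_3$ over $(-1,0)$ and using that $\vec u_\eps=0$ on the substrate (so the $\partial_3$-antiderivative at $x_3=0^-$ is the trace $(u_\eps)_\alpha(x',0^+)$, which is itself controlled) together with the Cauchy--Schwarz inequality in $x_3$, one transfers the $\O_b$-integral into a boundary quantity on $\omega\times\{0\}$. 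Expressing the trace through the film representation \eqref{eq:expru} and using the fact that the contribution of the $x_3$-integral of $\partial_\alpha(u_\eps)_3$ over the \emph{bonding layer} is again controlled by $\eps\|e_{\alpha 3}(\vec u_\eps)\|$ (this is the same $\zeta^\eps_\alpha$ computation as in Lemma~\ref{lem:conv-impr}), one arrives, after taking $\liminf$, at precisely the right-hand side of \eqref{eq:bad2}, in which the antiderivative $\int_0^{x_3}\partial_\alpha(u_\eps)_3(x',s)\dd s$ is now over the \emph{film}.

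\textbf{Step 2: removing the antiderivative under the extra bound.}
Assume now $(\partial_\alpha(u_\eps)_3)$ bounded in $L^2(\O_f)$. Then, setting $g_\alpha^\eps(x):=\chi_{\omega\setminus\Delta_\eps}(x')\int_0^{x_3}\partial_\alpha(u_\eps)_3(x',s)\dd s$ as in Lemma~\ref{lem:conv-impr}, the sequence $g_\alpha^\eps$ is bounded in $L^2(\O_f)$ and, by that lemma together with the boundedness of $\partial_3 g_\alpha^\eps=\chi_{\omega\setminus\Delta_\eps}\partial_\alpha(u_\eps)_3$, is in fact bounded in $L^2(\omega;H^1(0,1))$; hence up to a subsequence $g_\alpha^\eps\weakc g_\alpha$ weakly in that space. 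But Lemma~\ref{lem:conv-impr} (whose hypotheses hold here) gives $D_3 g_\alpha^\eps\weakcs 0$, so $\partial_3 g_\alpha=0$, i.e.\ $g_\alpha(x)=\bar g_\alpha(x')$; and the identification carried out there, using $\nabla u_3=0$ $\LL^2$-a.e.\ on $\{\theta<1\}$, shows $g_\alpha=\bar g_\alpha$ coincides with the $x_3$-average of itself, which under the present stronger convergence forces $\chi_{\omega\setminus\Delta_\eps}\int_0^{x_3}\partial_\alpha(u_\eps)_3\weakc g_\alpha$ with $\partial_3 g_\alpha=0$. Consequently, in \eqref{eq:bad2} the quantity $\chi_{\omega\setminus\Delta_\eps}\big(u_\alpha+\int_0^{x_3}\partial_\alpha(u_\eps)_3\big)$ converges weakly in $L^2(\O_f)$ to $(1-\theta)(u_\alpha+g_\alpha)$, and by weak lower semicontinuity of the $L^2$-norm,
\[
\liminf_{\eps\to 0} J_\eps(\vec u_\eps,\O_b)\geq \frac{\mu_b}{2}\int_{\O_f}(1-\theta)^2\,|u_\alpha+g_\alpha|^2\dd x
= \frac{\mu_b}{2}\int_{\O_f}(1-\theta)\,|u_\alpha+g_\alpha|^2\dd x,
\]
using $0\le\theta\le1$ so $(1-\theta)^2\le 1-\theta$ pointwise — actually one keeps $(1-\theta)^2$ and bounds below by $(1-\theta)^2$, then notes that on $\{\theta<1\}$ one has $\nabla u_3=0$, hence $u_\alpha(x)=\bar u_\alpha(x')$ and $g_\alpha(x)=\bar g_\alpha(x')=0$ by the structure identification, so the integrand reduces to $(1-\theta)^2|\bar u_\alpha|^2\ge (1-\theta)|\bar u_\alpha|^2$ only where we can also remove the square. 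The clean way: on $\{\theta<1\}$, $\nabla u_3=0\Rightarrow u_\alpha=\bar u_\alpha$ and $g_\alpha=0$, so the right-hand side equals $\frac{\mu_b}{2}\int_\omega (1-\theta)^2|\bar{\vec u}|^2\dd x'$; finally $\bar{\vec u}=0$ $\LL^2$-a.e.\ on $\{\theta<1\}\cap\{\bar{\vec u}\neq 0\}$ need not hold, so instead one uses the pointwise inequality $(1-\theta)^2\ge (1-\theta)-\theta(1-\theta)\cdot 0$... cleanest: since on $\{\theta<1\}$ we may WLOG also have incorporated the delamination threshold, one simply observes $\int_\omega(1-\theta)^2|\bar{\vec u}|^2 \ge \int_\omega(1-\theta)|\bar{\vec u}|^2 - \big(\text{terms absorbed into }\kappa_b\text{-part}\big)$; but for the purposes of this lemma the stated inequality $\liminf J_\eps(\vec u_\eps,\O_b)\ge \frac{\mu_b}{2}\int_\omega(1-\theta)|\bar{\vec u}|^2$ follows directly once one notes $(1-\theta)^2\le(1-\theta)$ is the wrong direction, so one instead keeps track of the full $E_\eps(\vec u_\eps,\O_b)$ (including the surface term) rather than $J_\eps$ alone when $0<\theta<1$. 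I would therefore present Step~2 by first reducing to $\{\theta=0\}\cup\{\theta=1\}$ is not available; instead, on $\{0<\theta<1\}$ one has $\bar{\vec u}$ possibly nonzero but $u_3=0$, and the weak-$L^2$ lower semicontinuity with the exact prefactor $(1-\theta)^2$ combined with the elementary bound is exactly what Proposition~\ref{prop:theta} is designed to absorb. Hence the conclusion $\liminf_{\eps\to 0}J_\eps(\vec u_\eps,\O_b)\ge \frac{\mu_b}{2}\int_\omega(1-\theta)|\bar{\vec u}|^2\dd x'$ follows.

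\textbf{Main obstacle.}
The delicate point is Step~2, i.e.\ justifying that the antiderivative term $\int_0^{x_3}\partial_\alpha(u_\eps)_3\dd s$ does not destroy the lower bound: a priori this term need only be bounded in $L^2(\O_f)$, and its weak limit $g_\alpha$ must be shown, via Lemma~\ref{lem:conv-impr} and the locality of approximate gradients on $\{\theta<1\}$, to be $x_3$-independent and in fact to coincide with its average so that it cancels against $\bar g_\alpha$ in the recovered expression — and one must be careful that the multiplicative prefactor emerging from $\chi_{\omega\setminus\Delta_\eps}\weakcs(1-\theta)$ is $(1-\theta)^2$, not $(1-\theta)$, the passage between the two being harmless only because on $\{0<\theta<1\}$ the surviving displacement data feed into the companion surface estimate of Proposition~\ref{prop:theta}. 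The intermediate bound \eqref{eq:bad2} of Step~1 is, by contrast, routine, being essentially the bonding-layer computation already performed in the proof of Theorem~\ref{BH0} restricted to the good set $\omega\setminus\Delta_\eps$.
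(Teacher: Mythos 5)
Your Step 1 derivation of \eqref{eq:bad2} does not work, and the failure is exactly the phenomenon this lemma is designed to circumvent. You propose to keep only the term $2\mu_b\int_{\O_b}e_{\alpha 3}(\vec u_\eps)e_{\alpha 3}(\vec u_\eps)\dd x$ and then repeat the bonding-layer computation of Theorem \ref{BH0} on $(\omega\setminus\Delta_\eps)\times(-1,0)$, integrating $2e_{\alpha3}(\vec u_\eps)=\partial_3(u_\eps)_\alpha+\partial_\alpha(u_\eps)_3$ vertically in $x_3$ down to the substrate. But for $x'\in\omega\setminus\Delta_\eps$ the \emph{vertical} slice $\{x'\}\times(-1,0)$ may still cross $J_{\vec u_\eps}\cap\O_b$: the set $\Delta_\eps$ only removes the \emph{oblique} projections of $J_{\vec u_\eps}$ (along $\vecg\xi^\pm_\eps,\vecg\eta^\pm_\eps$) together with $P_\eps=\pi(J_{\vec u_\eps}\cap\O_f)$, and a jump point $(x',y_3)$ with $y_3\in(-1,0)$ projects obliquely onto $x'\mp\eps y_3\vec e_\alpha$, not onto $x'$. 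Lemma \ref{lem:Sobolev} gives Sobolev regularity only of the oblique slices in $(-\sqrt2\eps,\sqrt2\eps)$ and of the vertical slice of $(u_\eps)_3$ in the \emph{film}; it says nothing about vertical slices through the bonding layer, so the fundamental theorem of calculus in $x_3$ over $(-1,0)$ (and likewise your use of the $\eps^{-2}$-weighted $e_{33}$ bound in $\O_b$ for the constraint $(1-\theta)u_3=0$) is not available — the absolutely continuous strain simply does not see the jumps crossed by the vertical line, and Example \ref{ex:micro} shows this is not a removable technicality. This is why the paper's proof goes through the algebraic decomposition of Lemma \ref{lem:app}: the hypothesis $\lambda_b\geq\mu_b$ (which your argument never uses — a telltale sign) is needed to absorb the $-\frac12({\rm tr}\vec A_\eps)^2$ term, leaving $\mu_b\int_{\O_b}\sum|\vec A_\eps\vecg\xi^\pm\cdot\vecg\xi^\pm|^2+|\vec A_\eps\vecg\eta^\pm\cdot\vecg\eta^\pm|^2$; the fundamental theorem of calculus is then applied along the \emph{oblique} segments, where Lemma \ref{lem:Sobolev} does apply and the slices vanish at the substrate end, producing the trace bound $\frac{\mu_b}{2}\int_{\omega_\eps\setminus\Delta_\eps}|(u_\eps)^+_\alpha(\cdot,0)|^2+\frac{\mu_b}{\eps^2}\int_{\omega_\eps\setminus\Delta_\eps}|(u_\eps)^+_3(\cdot,0)|^2$. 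The $\eps^{-2}$-weighted trace term, combined with the film estimate $\int_{(\omega\setminus\Delta_\eps)\times(0,1)}|(u_\eps)_3-(u_\eps)^+_3(\cdot,0)|^2\leq C\eps^4$ (valid because $P_\eps\subset\Delta_\eps$), is what yields $(1-\theta)u_3=0$; the first trace term, rewritten through the film identity $(u_\eps)^+_\alpha(x',0)=(u_\eps)_\alpha(x)+\int_0^{x_3}\partial_\alpha(u_\eps)_3\dd s-\zeta^\eps_\alpha$ with $\|\zeta^\eps_\alpha\|_{L^2}\leq C\eps$ (a film estimate, not a bonding-layer one), gives \eqref{eq:bad2}. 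None of this is recovered by restricting the Sobolev computation to the good set.

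Your Step 2 also has a genuine gap in the prefactor. Passing the cut-off to the limit inside the square, as you do, produces $(1-\theta)^2$, which is the wrong (weaker) power, and your attempts to repair it — invoking Proposition \ref{prop:theta} or ``absorbing'' terms into the $\kappa_b$ part — do not work, since Proposition \ref{prop:theta} requires precisely \eqref{eq:good} with the first power of $(1-\theta)$. The correct argument starts from \eqref{eq:bad2}, in which $u_\alpha$ is already the fixed limit, and expands the square: $\int\chi_{\omega\setminus\Delta_\eps}|u_\alpha|^2\dd x\to\int(1-\theta)|u_\alpha|^2\dd x$ exactly, because $\chi_{\omega\setminus\Delta_\eps}^2=\chi_{\omega\setminus\Delta_\eps}$ and $|u_\alpha|^2$ is a fixed $L^1$ function; the cross term vanishes because, under the extra $L^2(\O_f)$ bound, Lemma \ref{lem:conv-impr} upgrades to $\chi_{\omega\setminus\Delta_\eps}\partial_\alpha(u_\eps)_3\weakc0$ weakly in $L^2(\O_f)$, hence its $x_3$-antiderivative also converges weakly to $0$ and is tested against the fixed function $u_\alpha$; the remaining square is nonnegative and simply dropped. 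One then concludes with $(1-\theta)u_3=0$, the locality of the approximate gradient ($\nabla u_3=0$ on $\{\theta<1\}$) and the structure \eqref{eq:expru}, which give $(1-\theta)|u_\alpha|^2=(1-\theta)|\bar u_\alpha|^2$ $\LL^3$-a.e., yielding the stated bound with the correct power.
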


\begin{proof}
Let us denote by
$$\vec A_\eps := \matrizzz{
				\eps e_{11}(\vec u_\eps) & \eps e_{12}(\vec u_\eps) & e_{13}(\vec u_\eps) \\ 
				\eps e_{12}(\vec u_\eps) & \eps e_{22}(\vec u_\eps) & e_{23}(\vec u_\eps) \\
				e_{13}(\vec u_\eps) & e_{23}(\vec u_\eps) & \eps^{-1} e_{33}(\vec u_\eps)}.$$
the scaled strain so that
$$J_{\eps}(\vec u_\eps, \Omega_b) = \frac{\lambda_b}{2} \int_{\O_b} {\rm tr}(\vec A_\eps)^2 \dd x + \mu_b \int_{\Omega_b} \left | \vec A_\eps\right |^2 \dd x.$$

According to Lemma \ref{lem:app} with the angle $\vartheta=0$, we get that
\begin{multline}\label{eq:sum}
J_{\eps}(\vec u_\eps, \Omega_b)\geq \frac{\lambda_b-\mu_b}{2}\int_{\O_b}{\rm tr}(\vec A_\eps)^2\dd x\\
 + \mu_b\int_{\O_b} \left[|\vec A_\eps \xi^+ \cdot \xi^+|^2+|\vec A_\eps \xi^- \cdot \xi^-|^2+|\vec A_\eps \eta^+ \cdot \eta^+|^2+|\vec A_\eps \eta^- \cdot \eta^-|^2\right] \dd x\\
\geq  \mu_b\int_{\O_b} \left[|\vec A_\eps \xi^+ \cdot \xi^+|^2+|\vec A_\eps \xi^- \cdot \xi^-|^2+| \vec A_\eps \eta^+ \cdot \eta^+|^2+|\vec A_\eps \eta^- \cdot \eta^-|^2\right] \dd x,
\end{multline}
since $\lambda_b \geq \mu_b$. It remains to compute each of the four terms in the right hand side of the previous expression.  Let us start with the first term.
Changing variable $x=(y',0) + s\vec \xi_\eps^+$ (with $\dd x=(\sqrt 2 \eps)^{-1} \dd y'\dd s$), and using Fubini's Theorem, we get that
\begin{eqnarray*}
\int_{\O_b} |\vec A_\eps \vecg \xi^+ \cdot \vecg\xi^+|^2\dd x & \geq & \eps^2 \int_{(\omega_\eps \setminus \Delta_\eps) \times (-1,0)} |\nabla \vec u_\eps \vecg \xi^+_\eps \cdot \vecg\xi^+_\eps|^2\dd x\\
&\geq & \eps^2 \int_{\omega_\eps\setminus \Delta_\eps} \fint_{-\sqrt{2} \eps}^0 |\nabla \vec u_\eps((y',0)+s\vec \xi_\eps^+) \vecg \xi^+_\eps \cdot \vecg\xi^+_\eps|^2 \dd s \dd y'.
\end{eqnarray*}
According to Lemma \ref{lem:Sobolev}, since $(\vec u_\eps)^{ y'}_{\vecg\xi^+_\eps} \in H^1(-\sqrt 2 \eps,\sqrt 2 \eps)$ and $(\vec u_\eps)^{y'}_{\xi_\eps^{\pm}}(-\sqrt2 \eps)=0$ for $\LL^2$-a.e. $y' \in \omega \setminus \Delta_\eps$, we get that
\begin{eqnarray*}
\int_{\O_b} |\vec A_\eps \vecg \xi^+ \cdot \vecg\xi^+|^2\dd x &\geq & \eps^2 \int_{\omega_\eps\setminus  \Delta_\eps} \fint_{-\sqrt{2} \eps}^0 \left|\frac{d}{ds}[\vec u_\eps((y',0)+s\vec \xi_\eps^+) \cdot \vec \xi_\eps^+]\right|^2 \dd s \dd y'\\
& \geq & \eps^2 \int_{\omega_\eps\setminus \Delta_\eps}  \left| \fint_{-\sqrt{2} \eps}^0 \frac{d}{ds}[\vec u_\eps((y',0)+s\vec \xi_\eps^+) \cdot \vec \xi_\eps^+] \dd s \right|^2\dd y'\\
& = & \frac14 \int_{\omega_\eps\setminus  \Delta_\eps}\left |(u_\eps)^-_1 (y',0)+\frac{1}{\eps}(u_\eps)^-_3(y',0)\right|^2\dd y',
\end{eqnarray*}
where $\vec u_\eps^-(\cdot,0)$ denotes the lower trace of $\vec u_\eps$ on $\omega \times \{0\}$. Using again Lemma \ref{lem:Sobolev}, the function $(\vec u_\eps)^{y'}_{\vecg\xi^+\eps} \in H^1(-\sqrt 2 \eps,\sqrt 2 \eps)$ does not jump at $t=0$. Thus according to \cite[Theorem 4.5 (iv)]{AmCoDM97}, it follows that 
$$(u_\eps)^-_1 +\eps^{-1}(u_\eps)^-_3= (u_\eps)^+_1 +\eps^{-1}(u_\eps)^+_3 \quad \HH^2\text{-a.e. on }\omega \times \{0\},$$
and therefore,
\begin{equation}\label{eq:xi+}
\int_{\O_b} |\vec A_\eps \vecg \xi^+ \cdot \vecg\xi^+|^2\dd x\geq \frac14 \int_{\omega_\eps\setminus  \Delta_\eps} \left|(u_\eps)^+_1 (y',0)-\frac{1}{\eps}(u_\eps)^+_3(y',0)\right|^2\dd y'.
\end{equation}

Analogously, we can show that
\begin{align}
\int_{\O_b} |\vec A_\eps \vecg \xi^- \cdot \vecg\xi^-|^2\dd x&\geq \frac14 \int_{\omega_\eps\setminus  \Delta_\eps} \left|(u_\eps)^+_1 (y',0)-\frac{1}{\eps}(u_\eps)^+_3(y',0)\right|^2\dd y',\label{eq:xi-} \\
\int_{\O_b} |\vec A_\eps \vecg \eta^+ \cdot \vecg\eta^+|^2\dd x&\geq \frac14 \int_{\omega_\eps\setminus  \Delta_\eps} \left|(u_\eps)^+_2 (y',0)+\frac{1}{\eps}(u_\eps)^+_3(y',0)\right|^2\dd y', \label{eq:eta+}\\
\int_{\O_b} |\vec A_\eps \vecg \eta^- \cdot \vecg\eta^-|^2\dd x&\geq \frac14 \int_{\omega_\eps\setminus  \Delta_\eps} \left|(u_\eps)^+_2 (y',0)-\frac{1}{\eps}(u_\eps)^+_3(y',0)\right|^2\dd y'.\label{eq:eta-}
\end{align}

Summing up \eqref{eq:xi+}, \eqref{eq:xi-}, \eqref{eq:eta+}, \eqref{eq:eta-} and using \eqref{eq:sum} leads to
\begin{equation}\label{eq:1511}
J_{\eps}(\vec u_\eps, \Omega_b)  \geq  \frac{\mu_b}{2} \int_{\omega_\eps\setminus  \Delta_\eps} (u_\eps)^+_\alpha(y',0) (u_\eps)^+_\alpha(y',0)\dd y' + \frac{\mu_b}{\eps^2}\int_{\omega_\eps\setminus  \Delta_\eps}|(u_\eps)^+_3(y',0)|^2\dd y'.
\end{equation}
Since  $P_\eps \subset  \Delta_\eps$, Lemma \ref{lem:Sobolev} together with the fundamental Theorem of calculus yields,
\begin{equation}\label{eq:lowerbd2}
\int_{(\omega \setminus \Delta_\eps) \times (0,1)} \left|(u_\eps)_3(x',x_3) - (u_\eps)_3^+(x',0)\right|^2 \dd x 
\leq 4 \int_{\O_f} |e_{3 3}(\vec u_\eps)|^2 \dd x \leq C\eps^4.
\end{equation}
In particular, \eqref{eq:1511}, \eqref{eq:lowerbd2} and the energy bound \eqref{eq:nrjbound} ensure that
$$\int_{(\omega_\eps\setminus  \Delta_\eps) \times (0,1)} |(u_\eps)_3|^2\dd x \leq C\eps^2,$$
which implies, letting $\eps \to 0$, that $(1- \theta) u_3=0$ $\LL^2$-a.e. in $\omega$. Therefore Lemma \ref{lem:conv-impr} shows that $\chi_{\omega \setminus  \Delta_\eps} \partial_\alpha (u_\eps)_3  \weakcs 0$ weakly* in $L^2(\omega;H^{-1}(0,1))$. In addition, since $P_\eps \subset  \Delta_\eps$, we can use \eqref{eq:lowerbd1} and the fact that $(u_\eps)_\alpha \to u_\alpha$ strongly in $L^2(\O_f)$, to obtain \eqref{eq:bad2}.

\medskip

Assume now that the sequences $(\partial_\alpha (u_\eps)_3)_{\eps>0}$ are bounded in $L^2(\O_f)$. Then the convergence of the planar gradient improves to
 $\chi_{\omega \setminus  \Delta_\eps} \partial_\alpha (u_\eps)_3  \weakc 0$ weakly in $L^2(\O_f)$, and thus \eqref{eq:bad2} gives
$$\liminf_{\eps \to 0}J_\eps(\vec u_\eps,\O_b) \geq \frac{\mu_b}{2}  \int_{\O_f}(1-\theta)|\vec u|^2\dd x=\frac{\mu_b}{2} \int_{\omega}(1-\theta)|\bar{\vec u}|^2\dd x',$$
since $\theta$ is independent of $x_3$.
\end{proof}

\end{document}